\pgfplotsset{compat=1.15}
\newcommand{\too}{\longrightarrow}
\newcommand{\eps}{\varepsilon}
\newcommand{\R}{\mathbb R}
\newcommand{\F}{\mathcal F}
\newcommand{\E}{\mathbb E}
\renewcommand{\P}{\mathbb P}
\newcommand{\1}{\mathds 1}
\newcommand{\dx}{\mathrm{d}}
\newcommand{\Lf}{\mathcal{L}}
\newcommand{\A}{\mathcal{A}}
\newcommand{\gap}{\text{gap}}
\newcommand{\Var}{\text{Var}}
\theoremstyle{definition}
\newtheorem{defn}{Definition}
\theoremstyle{plain}
\newtheorem{thm}{Theorem}
\newtheorem{prop}[defn]{Proposition}
\newtheorem{lem}[defn]{Lemma}
\newtheorem{rem}[defn]{Remark}
\definecolor{darkgreen}{rgb}{0.1,0.7,0.1}
\definecolor{darkred}{rgb}{0.7,0.1,0.1}
\begin{document}

\title{Hydrodynamic limit and cutoff for the biased adjacent walk on the simplex}

\author{Cyril Labb\'e}
\address{Universit\'e de Paris, Laboratoire de Probabilit\'es, Statistiques et Mod\'elisation, UMR 8001, F-75205 Paris, France}
\email{clabbe@lpsm.paris}
\author{Engu\'erand Petit}
\address{Universit\'e Paris-Dauphine, PSL University, Ceremade, CNRS, 75775 Paris Cedex 16, France.}
\email{petit@ceremade.dauphine.fr}

\pagestyle{fancy}
\fancyhead[LO]{}
\fancyhead[CO]{\sc{C.~Labb\'e and E.~Petit}}
\fancyhead[RO]{}
\fancyhead[LE]{}
\fancyhead[CE]{\sc{Cutoff for the biased adjacent walk}}
\fancyhead[RE]{}

\date{}

\begin{abstract}
We investigate the asymptotic in $N$ of the mixing times of a Markov dynamics on $N-1$ ordered particles in an interval. This dynamics consists in resampling at independent Poisson times each particle according to a probability measure on the segment formed by its nearest neighbours. In the setting where the resampling probability measures are symmetric, the asymptotic of the mixing times were obtained and a cutoff phenomenon holds. In the present work, we focus on an asymmetric version of the model and we establish a cutoff phenomenon. An important part of our analysis consists in the derivation of a hydrodynamic limit, which is given by a non-linear Hamilton-Jacobi equation with degenerate boundary conditions.

\medskip

\noindent
{\bf MSC 2010 subject classifications}: Primary 60J25; Secondary 37A25, 70H20.\\
 \noindent
{\bf Keywords}: {\it Mixing time; Cutoff; Adjacent walk; Hydrodynamic limit; Hamilton-Jacobi equation}
\end{abstract}

\maketitle

\setcounter{tocdepth}{1}
\tableofcontents

\section{Introduction}

The investigation of the mixing times of (sequences of) Markov chains has given rise to a vast literature. In some situations, a \emph{cutoff phenomenon} occurs: the distance to equilibrium falls abruptly at some critical time from its maximal value to $0$. This phenomenon was introduced in the context of card shuffling by Aldous and Diaconis in the eighties~\cite{DiaSha,AldDia,diaconis1996cutoff}. Although a general theory is still missing, it has been established for a variety of discrete models~\cite{LevPerWil}.  On the other hand, there are relatively few examples of Markov processes, taking values in continuous state-spaces, for which a cutoff phenomenon is proved, see for instance~\cite{Lachaud,Meliot,Barrera,HoughJiang17,BarHogPar}. The present work focuses on a continuous state-space dynamics that presents an asymmetry: in this context, the determination of the asymptotic of the mixing times, with a sharp prefactor, requires specific information on the process notably its hydrodynamic limit.

\subsection{The model}

We consider $N-1$ ordered particles in the interval $[0,N]$ that evolve through random resampling events. More precisely, on the state-space
$$S_N:= \big\{x=(x_0,...,x_N)\in\R^{N+1}\;,\quad 0=x_0\leqslant x_1\leqslant...\leqslant x_N=N \big\}\;,$$
we are given independent rate $1$ Poisson clocks attached to each $k\in \{1,\ldots,N-1\}$. We consider the continuous-time Markov process $(X^x(t),t\ge 0)$ that starts at time $0$ from some configuration $x\in S^N$, and that evolves as follows: if the $k$-th clock rings, say at time $t$, then the $k$-th particle, $X_k^x(t-)$, is resampled at
$$ X_k^x(t) := (1-\Theta_k) X^x_{k-1}(t-)+\Theta_k X^x_{k+1}(t-)\;,$$
where $\Theta_k$ is an independent random variable on $[0,1]$ drawn according to some fixed distribution, possibly depending on $k$. This resampling mechanism clearly preserves the ordering of the particles.\\
It is convenient to view each configuration $x\in S_N$ as a height function $k \mapsto x_k$, which is a non-decreasing map from $\{0,\ldots,N\}$ to $[0,N]$ that is bound to $(0,0)$ and $(N,N)$, we refer to Figure \ref{Fig_expect} for an illustration.\\

From now on, we assume that the r.v.~$\Theta_k$ follow Beta $(\alpha_k,\alpha_{k+1})$ laws for some sequence of parameters $(\alpha_k; k=1,\ldots,N)$. Actually, this is the only choice of resampling laws for which the process is reversible w.r.t.~an invariant measure that has a product structure (we refer to Section \ref{Sec:Ppties} for more details).\\

Our goal is to investigate the asymptotic in $N$ of the mixing times of this model. Let $P_t^x$ denote the law of $X^x(t)$ starting from the configuration $x\in S_N$ and let $\pi_N$ be the invariant measure. The $\eps$-mixing time is defined as
$$ t^N_\text{mix}(\eps)=\inf\left\{t\geqslant 0, \sup_{x\in S_N} \Vert P^x_t-\pi_N\Vert_{\text{TV}}\leqslant \eps\right\}\;.$$
In words, this is the time needed for the total variation distance of the process to equilibrium, starting from the ``worst'' initial condition, to pass below some given threshold $\eps\in(0,1)$.

\subsection{Existing results}

This question has already been addressed for \emph{symmetric} instances of the model. In 2005, Randall and Winkler~\cite{RW05} considered the particular case where $\alpha_k = 1$ for all $k\in \{1,\ldots,N\}$: the beta laws are then merely uniform laws so that each coordinate $X^x_k$ is resampled at rate $1$ uniformly over the segment $[X^x_{k-1},X^x_{k+1}]$ formed by its neighbours, and the invariant measure is the Lebesgue measure on $S_N$. In this setting, they established a concentration phenomenon for the mixing times, often referred to as \emph{pre-cutoff}: they showed the existence of two constants $0 < C < C'$ such that for all $\eps \in (0,1)$ and for all $N\ge 1$ large enough
$$ CN^2\log(N)\leqslant t^N_\text{mix}(\eps)\leqslant C'N^2\log(N)\;.$$
Recently, Caputo, Labb\'e and Lacoin~\cite{CLL} sharpened this result by establishing a \emph{cutoff phenomenon}: for all $\eps\in (0,1)$, as $N\to\infty$
$$ t^N_{\text{mix}}(\eps) \sim \frac{N^2 \log N}{\pi^2}\;.$$
Actually, the result in~\cite{CLL} is proved in the more general setting where $\alpha_k = \alpha \ge 1$ for all $k\in \{1,\ldots,N\}$: that is to say, in the situation where the resampling laws are \emph{symmetric} (the two parameters of the beta distributions are equal) and \emph{unimodal} (this parameter is larger than or equal to $1$). The invariant measure is still explicit in this setting and its large scale behaviour is quite elementary: the $k$-th coordinate at equilibrium is centered at $k$ with fluctuations of order $\sqrt{N}$ with gaussian laws in the limit. Note that the value $\alpha$ does not affect the order of the mixing times nor the prefactor.\\


\subsection{Main results}

In the present work, we investigate the situation where the resampling laws are \emph{asymmetric}, that is, the situation in which the means of the beta laws are different from $1/2$. We restrict ourselves to the case where the means of the beta laws are all the same and adopt the parametrisation
$$ \alpha_k = \alpha_1 \Big(\frac{1+\lambda}{1-\lambda}\Big)^{k-1}\;,\quad\text{with }\lambda=\lambda(N) > 0 \text{ and } \alpha_1 \ge 1\;.$$
The resampling laws are then asymmetric, unimodal beta laws. The dynamics tends to resample the particles closer to their left neighbours. The invariant measure remains explicit, and the behaviour of the height function is quite different from that in the symmetric case, see Figure \ref{Fig_expect}.

\begin{figure}
	\centering
	\begin{minipage}[b]{0.3\linewidth}
	\includegraphics[width = 4cm]{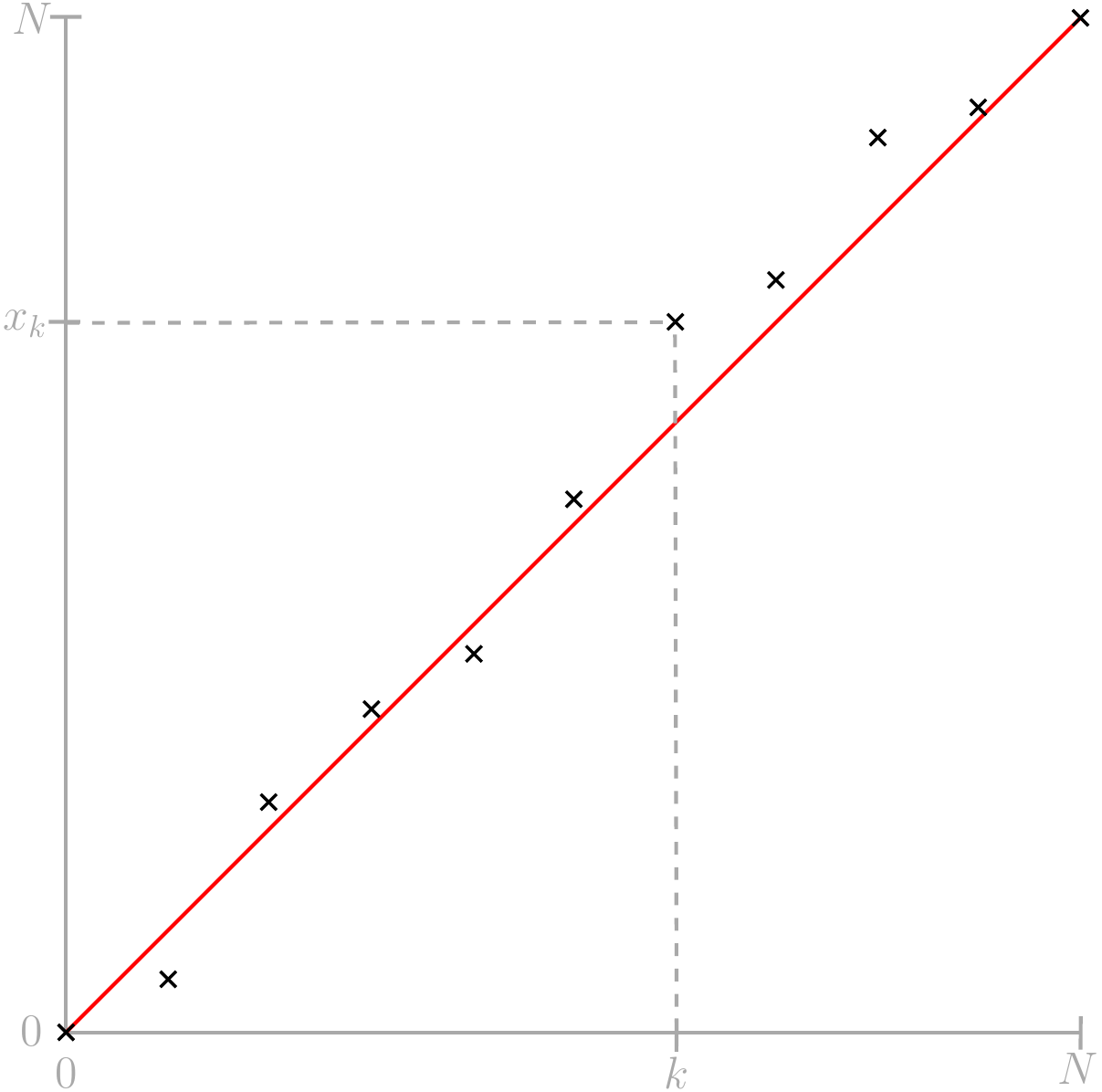}
	\end{minipage}\hspace{1.4cm}
	\begin{minipage}[b]{0.3\linewidth}
	\includegraphics[width = 4cm]{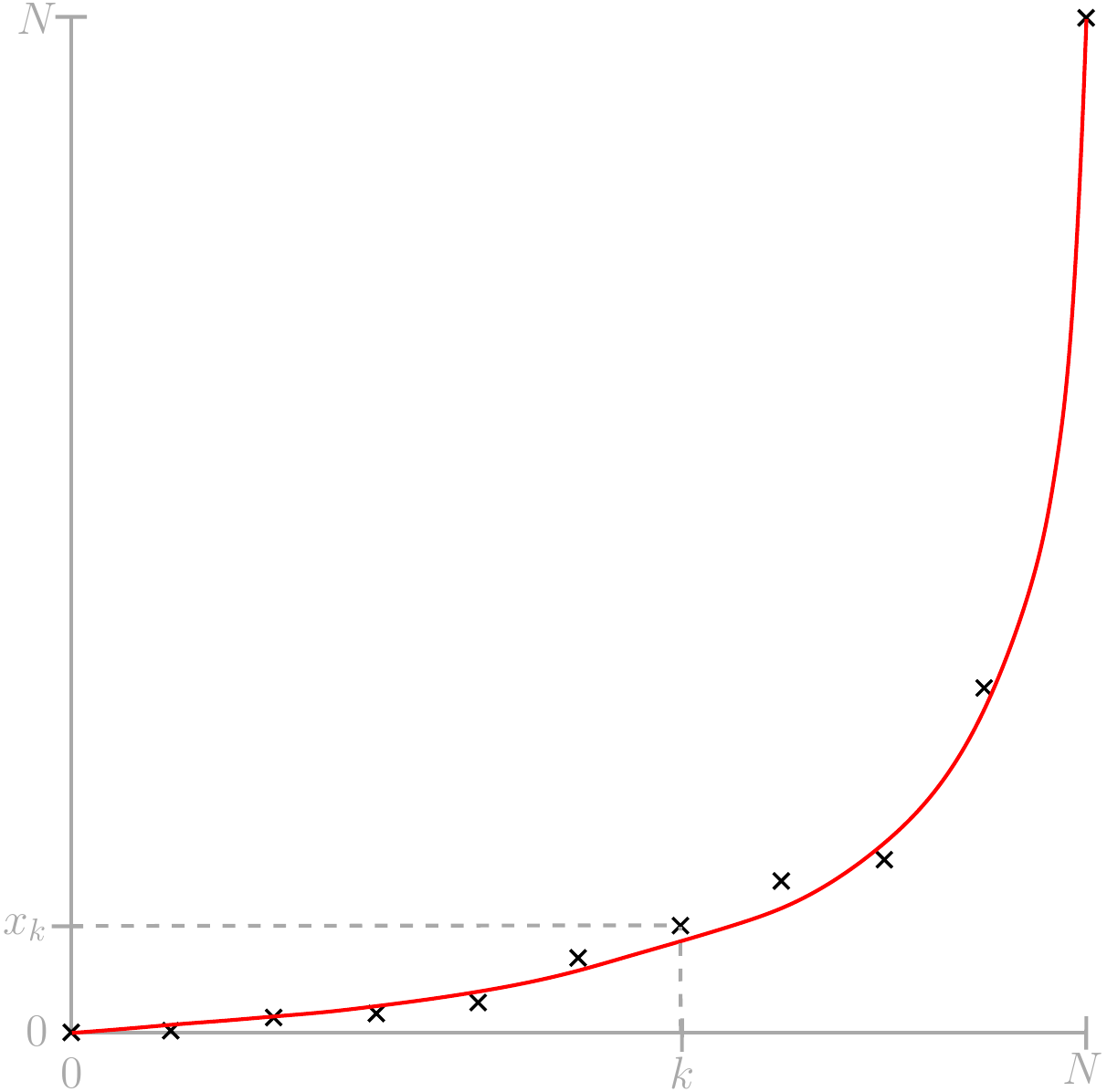}
	\end{minipage}
\caption{In red, the profile of the expectation under the invariant measure, and in black a typical height function. On the left, the symmetric case ($\lambda = 0$) and on the right, the asymmetric case ($\lambda = O(1)$).}\label{Fig_expect}
\end{figure}

It happens that the pattern of convergence to equilibrium depends also drastically on the strength (w.r.t.~$N$) of the asymmetry. The main results of the present article are as follows.

\begin{thm}\label{Th:2}
	Assume that the asymmetry parameter $\lambda > 0$ is independent of $N$. Then for all $\eps\in (0,1)$ and all $\delta > 0$, provided that $N$ is large enough
	$$ (1-\delta)\frac{N}{\lambda}  \leqslant t^N_{\text{mix}}(\eps) \leqslant (1+\delta) \frac{N\log(\frac{1+\lambda}{1-\lambda})}{1-\sqrt{1-\lambda^2}}\;.$$
\end{thm}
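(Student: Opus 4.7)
Both bounds rely on a hydrodynamic description of the rescaled height function $u^N(t,\xi):=N^{-1}X^x_{\lfloor N\xi\rfloor}(Nt)$. Writing $r:=(1+\lambda)/(1-\lambda)$, one has $\E[\Theta_k]=(1-\lambda)/2$, so the infinitesimal generator acts on the $k$-th coordinate by
\begin{equation*}
\tfrac{1-\lambda}{2}(X_{k+1}-X_k)-\tfrac{1+\lambda}{2}(X_k-X_{k-1}),
\end{equation*}
and after the time rescaling $t\mapsto Nt$ this yields the first-order Hamilton--Jacobi equation
\begin{equation*}
\partial_t u+\lambda\,\partial_\xi u = 0\text{ on }(0,1),\qquad u(t,0)=0,\quad u(t,1)=1,
\end{equation*}
to be understood in the viscosity sense so as to accommodate the outflow boundary $\xi=1$. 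Starting from the top profile $u_0\equiv1$ the solution is $u(t,\xi)=\mathbf{1}_{\{\xi>\lambda t\}}$, whose bulk vanishes exactly at macroscopic time $1/\lambda$; starting from the bottom profile $u_0\equiv0$, $u$ stays identically $0$ away from $\xi=1$.

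\textbf{Lower bound.} I would take the extremal initial configuration $x^\star$ with $x^\star_k=N$ for $k\ge1$ together with the test statistic $F_N(x):=N^{-2}\sum_{k=1}^{N-1}x_k$. Combining the hydrodynamic limit above with an $L^2$ fluctuation control (obtained from the carr\'e-du-champ identity) one obtains $F_N(X^{x^\star}(Nt))=(1-\lambda t)_++o_{\P}(1)$. On the other hand, the explicit product description of $\pi_N$ from Section~\ref{Sec:Ppties} shows that the equilibrium profile $\E_{\pi_N}[u^N(\cdot,\xi)]\approx r^{(\xi-1)N}$ is concentrated in a boundary layer near $\xi=1$ of macroscopic width $O(1/(N\log r))$, so $\E_{\pi_N}[F_N]=o(1)$ with vanishing variance. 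Evaluating at $t=(1-\delta)/\lambda$ and applying Chebyshev's inequality to $\{F_N>\delta/2\}$ forces $\|P^{x^\star}_{(1-\delta)N/\lambda}-\pi_N\|_{\mathrm{TV}}\to1$, whence the lower bound.

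\textbf{Upper bound.} Each resampling is a convex combination of the two nearest neighbours, so the grand coupling using the same Poisson clocks and the same $\Theta_k$'s preserves the componentwise partial order on $S_N$; in particular $\sup_x\|P^x_t-\pi_N\|_{\mathrm{TV}}\le\P(\tau_{\mathrm{coal}}>t)$, where $\tau_{\mathrm{coal}}$ is the coalescence time of the trajectories started from the extremal configurations $\boldsymbol N:=(0,N,\ldots,N)$ and $\boldsymbol 0:=(0,\ldots,0,N)$. I would control $\tau_{\mathrm{coal}}$ in two stages: (a) a \emph{transport} phase of macroscopic length $\sim 1/\lambda$, during which the hydrodynamic limit drives both profiles into a neighbourhood of $\xi=1$ of width $O(1/(N\log r))$; and (b) a \emph{local} absorption phase, of macroscopic duration $\log r/(1-\sqrt{1-\lambda^2})$, during which the residual discrepancy inside the boundary layer is contracted by the biased Beta kernels. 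The prefactor appears naturally: $1-\sqrt{1-\lambda^2}$ is the per-step contraction rate of a suitable quadratic functional under a single Beta$(\alpha_k,\alpha_{k+1})$ resampling, while $\log r$ counts the effective number of sites inside the layer when measured in the multiplicative coordinate induced by $\alpha_k=\alpha_1r^{k-1}$.

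\textbf{Main obstacle.} The lower bound is essentially soft once the hydrodynamic limit is in hand. The technical heart lies in stage (b) above: the hydrodynamic limit is blind to microscopic fluctuations inside a sub-macroscopic boundary layer, and standard hydrodynamic errors of order $o(N)$ are too crude to conclude coalescence there. I would therefore expect the bulk of the work to consist in identifying a Lyapunov function---likely a weighted discrepancy $\sum_k w_k(X^{\boldsymbol N}_k-X^{\boldsymbol 0}_k)^2$ with weights $w_k\propto r^{-(N-k)}$ adapted to the equilibrium---whose mean decay rate under the dynamics is precisely $(1-\sqrt{1-\lambda^2})/N$, and in turning that $L^2$ decay into the required total-variation bound via a one-step coupling with $\pi_N$ exploiting its product form.
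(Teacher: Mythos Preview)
Your lower bound is essentially the paper's argument: Proposition~\ref{Prop:HydroNaive} gives $N^{-1}X^{\max}_{\lfloor xN\rfloor}(tN/\lambda)\to\mathbf{1}_{[t,1]}(x)$ in probability, and the concentration of $\pi_N$ near the degenerate boundary profile yields the separation. One simplification the paper exploits: since $N^{-1}X^{\max}_k\in[0,1]$ and its \emph{mean} converges to a $\{0,1\}$-valued limit, Markov's inequality already gives convergence in probability; no carr\'e-du-champ fluctuation estimate is needed.

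For the upper bound your outline diverges from the paper and has a genuine gap exactly at the point you flag as the main obstacle. The paper does \emph{not} split into a hydrodynamic transport phase followed by a boundary-layer absorption phase; the hydrodynamic limit plays no role whatsoever in the upper bound. The key tool is an explicit eigenfunction of the generator (Lemma~\ref{Lemma:Eigen}),
\[
f_N(x)=\sum_{k=1}^{N-1} r^{-k/2}\sin\Big(\frac{k\pi}{N}\Big)\big(x_k-\overline{x}_k\big),\qquad \Lf f_N=\gamma_N f_N,\qquad \gamma_N=-\Big(1-\sqrt{1-\lambda^2}\cos\frac{\pi}{N}\Big),
\]
so that the \emph{linear} weighted area $A_t:=f_N(X^{\max}(t))-f_N(X^\pi(t))$ satisfies $\E[A_t]=e^{\gamma_N t}\E[A_0]$. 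The time $t_\delta$ in the statement is precisely what makes $e^{\gamma_N t_\delta}\E[A_0]\le r^{-(1+\delta/4)N}$; there is no preliminary transport stage. Note that the weights are $r^{-k/2}$, not $r^{-(N-k)}$, and the functional is linear, not quadratic: your proposed $\sum_k w_k(X^{\max}_k-X^y_k)^2$ is not an eigenfunction and there is no evident mechanism by which it would contract at rate $1-\sqrt{1-\lambda^2}$. Your phase~(a) also cannot do what you claim: the naive hydrodynamic limit only delivers $X^{\max}_k=o(N)$, not microscopic closeness of the two profiles, so it contributes nothing towards coalescence.

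There is a second gap in the coupling. Using the same $\Theta_k$'s preserves order, but two coordinates never merge unless their neighbours already agree, so converting ``small area'' into coalescence is not automatic. The paper instead couples $X^{\max}$ with $X^\pi$ (and separately with an arbitrary $X^x$, then applies the triangle inequality) via the \emph{optimal} coupling of the two resampling laws on the intervals $I^{\max}_k$ and $I^\pi_k$: at each update the two values are set equal with probability $1-\|\rho_k(I^{\max}_k)-\rho_k(I^\pi_k)\|_{\mathrm{TV}}$, and this total-variation distance is bounded by $C r^{k/2}\,\delta\overline{X}_k/\nabla X^\pi_k$ (Lemma~\ref{controle de Q}). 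Combined with the smallness of the $r^{-k/2}$-weighted differences from the first step and a tail bound on $\nabla X^\pi_k$ under $\pi_N$ (Lemma~\ref{control du gradiant}), every update in an additional window of length $O(\log N)$ succeeds with high probability.
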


\noindent This establishes a pre-cutoff phenomenon in the situation where the asymmetry does not depend on $N$. In the situation where the asymmetry parameter vanishes but not too fast, we establish a cutoff phenomenon:

\begin{thm}\label{Th:Main}
	Assume that the asymmetry parameter satisfies $\lambda \to 0$ and $\lambda \gg \log N / N$ as $N\to\infty$. Then for all $\eps\in (0,1)$
	$$t^N_{\text{mix}}(\eps)\sim \frac{4N}{\lambda}\;,\quad N\to\infty\;.$$
\end{thm}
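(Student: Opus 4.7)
The critical time $4N/\lambda$ is the time at which the hydrodynamic profile started from the worst initial condition first matches the equilibrium profile in the scaling limit; it coincides with the leading-order $\lambda \to 0$ expansion of the upper-bound prefactor in Theorem \ref{Th:2}. The plan is to establish matching upper and lower bounds on $t^N_{\text{mix}}(\eps)$ within a window of size $o(N/\lambda)$, by combining a hydrodynamic description of the process with variance estimates under the invariant measure.

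First, I would establish the hydrodynamic limit. Setting $u^N(t,s):=X^{x^N}_{\lfloor sN\rfloor}(tN/\lambda)/N$, a direct computation of the generator at a smooth profile gives an infinitesimal drift $-\lambda\,\partial_s u + (2N)^{-1}\partial_{ss} u$ on $X_k$ per unit real time. Under the assumption $\lambda \gg \log N/N$, the diffusive correction becomes sub-leading on the time scale $N/\lambda$, and $u^N$ should converge, in probability uniformly on $[0,T]\times[0,1]$, to the unique viscosity solution $u$ of a Hamilton-Jacobi equation $\partial_t u + H(\partial_s u)=0$ on $(0,\infty)\times(0,1)$, with the degenerate boundary conditions $u(t,0)=0$ and $u(t,1)=1$ (the degeneracy coming from the combination of monotonicity in $s$ with the fixed endpoints). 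Solving this equation from a worst initial profile (typically $u_0(s)=s$), and comparing the resulting trajectory with the macroscopic equilibrium profile $\bar u$ read off the product form of $\pi_N$, should pin the critical rescaled time down to $t^\star = 4$.

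For the upper bound, at $t_+ = (1+\delta)4N/\lambda$ the hydrodynamic limit yields $\|u^N(t_+,\cdot)-\bar u\|_\infty \to 0$ in probability, uniformly in the initial condition. From this macroscopically equilibrated state, a coupling with a stationary copy — controlled via the product structure of $\pi_N$ (cf.~Section~\ref{Sec:Ppties}) and a spectral-gap type estimate — would bring the total variation distance below $\eps$ in an additional time $o(N/\lambda)$, absorbed in the cutoff window. For the lower bound, I would choose a linear test functional $F(x)=N^{-2}\sum_k w_k x_k$ whose weights $w_k$ are localised where $u(t_-,\cdot)$ and $\bar u$ differ the most, with $t_-=(1-\delta)4N/\lambda$. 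The hydrodynamic limit gives concentration of $F$ under $P^{x^N}_{t_-}$ around a deterministic value distinct from $\E_{\pi_N}[F]$, while the product form of $\pi_N$ gives sharp control of $\Var_{\pi_N}[F]$. A Chebyshev argument then keeps the total variation distance close to $1$.

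The main obstacle is Step~1: correctly identifying the Hamilton-Jacobi equation with its degenerate boundary behaviour, and proving convergence with an error bound strong enough to control fluctuations inside the cutoff window. Particular care is needed at the boundary $s=1$, where the fixed endpoint constraint forces a shock-like concentration of slope and the standard viscosity solution framework must be adapted. The precise hypothesis $\lambda \gg \log N / N$ is exactly what makes the parabolic correction of size $1/(N\lambda)$ sub-leading, so that the deterministic hydrodynamic picture accurately captures the whole mixing behaviour.
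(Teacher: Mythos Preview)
Your proposal has a genuine gap in the lower bound: the scaling $u^N(t,s)=N^{-1}X^{x^N}_{\lfloor sN\rfloor}(tN/\lambda)$ is the \emph{naive} hydrodynamic limit of Proposition~\ref{Prop:HydroNaive}, and it can only deliver $N/\lambda$, not $4N/\lambda$. The reason is that the macroscopic equilibrium profile $\bar u(s)=\lim_N N^{-1}\pi_N(x_{\lfloor sN\rfloor})$ is degenerate: since $\pi_N(x_k)\sim N r^{k-N}$, one has $\bar u\equiv 0$ on $[0,1)$. The limiting equation for $u^N$ is the linear transport equation $\partial_t u+\partial_s u=0$ started from $u_0\equiv 1$ (not $u_0(s)=s$: the worst configuration is $\max$, with all coordinates equal to $N$), and its solution $\mathbf 1_{[t,1]}$ reaches $\bar u$ at rescaled time $1$, not $4$. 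Beyond that time the process is macroscopically at equilibrium in this scaling, yet still far in total variation. Your test-functional argument therefore cannot distinguish $P^{\max}_{t_-}$ from $\pi_N$ for any $t_->N/\lambda$.

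The missing idea, which is the paper's main technical contribution, is the nonlinear change of variable $T(u)=-N^{-1}\log_r\big(u/a_N+r^{-N}\big)$ that stretches the exponentially small equilibrium values onto the nondegenerate profile $1-k/N$. After applying $T$, the hydrodynamic limit becomes the genuinely nonlinear Hamilton--Jacobi equation $\partial_t f+\partial_x f+(\partial_x f)^2=0$ with boundary data $f(t,0)=1$, $f(t,1)=0$, $f(0,x)=0$, whose explicit viscosity solution $S(x,t)=\min\big(1-x,\tfrac{1}{4t}((t-x)_+)^2\big)$ first matches $1-x$ at $t=4$. Establishing this limit is nontrivial because $T$ is nonlinear and the boundary data are discontinuous at the corner; the paper sandwiches $T(X^{\max})$ between $T(\E[X^{\max}])$ and $\E[T(M)]$ for an auxiliary process $M$, identifies each as the solution of a discrete-space ODE approximating the PDE, and proves these converge to the unique viscosity solution.

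Your upper-bound sketch also departs from the paper and runs into the same issue: macroscopic equilibration in $\|u^N-\bar u\|_\infty$ already holds by time $N/\lambda$, so it gives no grip on the further time $3N/\lambda$ needed. The paper does not use hydrodynamics here at all; it uses the explicit eigenfunction $f_N$ of Lemma~\ref{Lemma:Eigen} to show that a twisted area between two monotonically coupled copies decays at rate $|\gamma_N|\sim\lambda^2/2$, and then that once this area is below a polynomial threshold the coupling merges in time $O(\log N)$. The prefactor $4$ emerges from $\log(r)/|\gamma_N|\sim 4/\lambda$, not from a hydrodynamic profile.
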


The regime where the asymmetry vanishes faster will be covered in a companion paper by the second named author, we quote the result here for completeness.

\begin{thm}[\cite{Enguerand}]
	Assume that the asymmetry parameter satisfies $\lambda \ll \log N / N$ as $N\to\infty$. Then for all $\eps\in (0,1)$
	$$t^N_{\text{mix}}(\eps)\sim \frac{\log N}{2 \gap_N}\;,\quad N\to\infty\;,$$
	and the spectral gap of the generator of the dynamics, denoted $\gap_N$, is given for every $N\ge 2$ by
	$$ \gap_N = 1 - \sqrt{1-\lambda^2} \cos(\frac{\pi}{N})\;.$$
\end{thm}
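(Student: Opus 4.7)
The proof decomposes into an explicit computation of $\gap_N$ and a sharp estimate of the mixing time. Since the mean of $\Theta_k$ is $(1-\lambda)/2$, the generator $L$ acts on the coordinates as $L(x_k) = \tfrac{1+\lambda}{2}x_{k-1} - x_k + \tfrac{1-\lambda}{2}x_{k+1}$ for $1\leq k\leq N-1$; on linear functionals $f_\phi(x):=\sum_{k=1}^{N-1}\phi_k x_k$ (with $\phi_0=\phi_N=0$) it therefore reads $L f_\phi = f_{\tilde\phi}$ with
$$\tilde\phi_k = \tfrac{1-\lambda}{2}\phi_{k-1} - \phi_k + \tfrac{1+\lambda}{2}\phi_{k+1}\;.$$
The similarity transform $\phi_k = r^k\psi_k$ with $r=\sqrt{(1-\lambda)/(1+\lambda)}$ symmetrizes this tridiagonal operator into $\tfrac{\sqrt{1-\lambda^2}}{2}(\psi_{k-1}+\psi_{k+1}) - \psi_k$, whose Dirichlet eigenvalues on $\{1,\ldots,N-1\}$ are $-1 + \sqrt{1-\lambda^2}\cos(j\pi/N)$ for $j=1,\ldots,N-1$. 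A standard induction on the polynomial degree of test functions, modeled on the symmetric analysis in~\cite{CLL}, would then show that these linear modes carry the eigenvalue of $L$ of smallest absolute value, yielding $\gap_N = 1-\sqrt{1-\lambda^2}\cos(\pi/N)$ exactly for every $N\geq 2$.

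For the lower bound on $t^N_\text{mix}(\eps)$, the plan is to apply Wilson's second-moment method with the top eigenfunction $\phi_k = r^k\sin(k\pi/N)$. Setting $\bar f_\phi := \E_{\pi_N}[f_\phi]$, the identity $\E[f_\phi(X^x(t))]-\bar f_\phi = e^{-\gap_N t}(f_\phi(x)-\bar f_\phi)$ holds; choosing $x\in S_N$ so that $|f_\phi(x)-\bar f_\phi|$ is of polynomial order in $N$, and bounding both $\Var_{\pi_N}(f_\phi)$ and $\sup_{x}\Var(f_\phi(X^x(t)))$ polynomially in $N$ via the explicit Dirichlet structure of $\pi_N$, Wilson's inequality yields $t^N_\text{mix}(\eps) \geq (1-o(1))\,\log N/(2\gap_N)$.

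The matching upper bound is where the main difficulty lies. The natural strategy is the $L^2$-type estimate $\Vert P^x_t - \pi_N\Vert_\text{TV}^2 \leqslant \tfrac14\,\chi^2(P^x_t,\pi_N)$ analyzed via the spectral decomposition of $L$ with respect to $\pi_N$. Obtaining the sharp prefactor $1/(2\gap_N)$ rather than merely the right order requires, beyond the spectral gap, a hypercontractive (or log-Sobolev) estimate for $\pi_N$ guaranteeing that the burn-in phase is of strictly smaller order than $\log N/(2\gap_N)$. Since $\pi_N$ is a Dirichlet-type measure on the simplex $S_N$ rather than Gaussian, adapting the classical hypercontractivity machinery to this continuous-state setting is where the main technical effort should lie. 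An alternative route is to extend the coupling arguments of~\cite{CLL} from the symmetric to the weakly asymmetric regime, exploiting $\lambda\ll \log N/N$ to ensure that the drift accumulated up to the mixing time remains negligible compared to the diffusive fluctuations of the height profile.
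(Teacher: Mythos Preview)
This theorem is not proved in the present paper: it is quoted from the companion work~\cite{Enguerand}, as stated explicitly in the text preceding the statement. The only piece established here is Lemma~\ref{Lemma:Eigen}, which shows that $-\gamma_N = 1-\sqrt{1-\lambda^2}\cos(\pi/N)$ is \emph{an} eigenvalue of $\Lf$ with eigenfunction $f_N$; the paper then remarks that $-\gamma_N$ is in fact the spectral gap and again defers to~\cite{Enguerand} for the proof. There is therefore no proof in this paper to compare your proposal against.

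That said, your proposal is not a proof but an outline with several substantial gaps you yourself flag. The eigenvalue computation is correct and matches Lemma~\ref{Lemma:Eigen}. However, the assertion that ``a standard induction on the polynomial degree of test functions'' shows these linear modes realize the spectral gap is not justified: unlike the symmetric case of~\cite{CLL}, the resampling measures here are $\beta(\alpha_k,\alpha_{k+1})$ with $\alpha_k$ growing geometrically in $k$, so the structure of higher-degree eigenfunctions is genuinely different and the induction is not ``standard''. For the upper bound you correctly identify that obtaining the prefactor $1/2$ (rather than merely the order $\log N/\gap_N$) requires something beyond the spectral gap, but neither the hypercontractive route nor the coupling route is carried out; both are stated as possible strategies with the main technical effort left open. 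In short, what you have written is a reasonable plan of attack, but it does not constitute a proof, and since the paper itself contains no proof of this statement there is nothing further to compare.
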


\begin{rem}
	In the regime where $\lambda\sim a\frac{\log(N)}{N}$ for some parameter $a>0$, we conjecture that a cutoff phenomenon holds and that the mixing times are equivalent to $\frac{\log(N)}{2\gap_N}+\frac{4N}{\lambda}$. Actually, one can rigorously prove that the mixing times are at most given by this expression.
\end{rem}

The proof of Theorems \ref{Th:2} and \ref{Th:Main} consists in establishing separately matching upper and lower bounds on the mixing times.

\subsubsection*{The upper bound.}
To prove the upper bound, we consider a monotone coupling of $X^x$ and $X^y$, that is, a coupling that preserves the ordering of the height functions provided the original configurations $x$ and $y$ were ordered. We follow a two-steps strategy, originally introduced by Randall and Winkler~\cite{RW05} in the symmetric case: (1) by a spectral argument, one shows that the area between the two height functions is ``small'' by the putative mixing time, (2) one shows that, if the area is small, then within a small time with large probability the area hits $0$. We apply it twice: first to $x= \max$ and $y$ drawn according to the invariant measure, and second to $y=\max$ and $x$ arbitrary. Here $\max$ denotes the maximal configuration
$$ {\max}_k = N\;,\quad \forall k\in \{1,\ldots,N-1\}\;.$$
By the triangle inequality, this is sufficient to obtain an upper bound on the total variation distance between $P^x_t$ and $P^\pi_t$ for any $x$, and thus on the mixing times.\\
Let us mention that in the asymmetric setting of Theorem \ref{Th:Main}, this strategy provides not only the right order for the mixing times but also the precise prefactor. This is to be compared with the symmetric situation where this strategy allowed Randall and Winkler~\cite{RW05} to obtain the right order but not the precise prefactor: Caputo, Labb\'e and Lacoin~\cite{CLL} then refined this strategy to obtain the precise prefactor - this is unnecessary in the present setting.

\subsubsection*{The lower bound.}
A natural strategy to establish a lower bound would be to apply Wilson's method: taking $f:S_N\to\R$ to be the eigenfunction associated to the spectral gap of the dynamics (which is explicit in our model), one compares the expectation and standard-deviation of $f(X^x(t))$ with the same statistics under the invariant measure. If the difference of expectations is much larger than the sum of standard-deviations, then necessarily the process $X^x(t)$ is far from equilibrium so that $t$ is a lower bound on the mixing times. A natural choice for $x$ is the maximal height function $\max$. It turns out that in the symmetric setting, this method~\cite{RW05, CLL} allows to obtain an optimal lower bound. Unfortunately, in the asymmetric setting considered in Theorem \ref{Th:Main}, this method does not provide an accurate lower bound simply because the standard-deviation of $f(X^x(t))$ is quite hard to estimate.

We follow another approach: we establish a hydrodynamic limit for our model and show that at a time smaller than the putative mixing time, the hydrodynamic limit is far from its equilibrium. This strategy was employed for the (weakly) asymmetric simple exclusion process by Labb\'e and Lacoin~\cite{LLASEP,LLWASEP} and is intimately related to the asymmetric nature of the model. Indeed, in more symmetric situations, one can establish a hydrodynamic limit (typically, a heat equation) but it generally evolves at a slower time scale than the actual mixing times and is therefore not useful to get accurate lower bounds.

\subsubsection*{The hydrodynamic limit(s).}
Actually, we establish two distinct hydrodynamic limits. The first, natural one arises as the limit of
$$g_N(x,t) := \frac1{N} X_{\lfloor xN\rfloor}^{\max}(N t / \lambda)\;,\quad x\in [0,1]\;, t\ge 0\;.$$
Simple computations at the level of the generator suggest that it should be given by the (viscosity) solution of the equation
$$ \begin{cases} \partial_t g + \partial_x g = 0\;,\quad x\in(0,1)\;,t>0\\
	g(t,0) = 0\;,\quad g(t,1) = 1\;,\quad g(0,x) = 0\;.
	\end{cases}$$
which happens to be given by $g(x,t) = \mathbf{1}_{[t,1]}(x)$. This is the content of the following result.

\begin{prop}\label{Prop:HydroNaive}
Assume $\lambda \gg 1/N$. Then for all $x \in (0,1)$ and $t > 0$ such that $x\ne t$, $g_N(x,t)$ converges in probability to $\mathbf{1}_{[t,1]}(x)$ as $N\to\infty$.
\end{prop}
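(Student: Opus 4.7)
The plan rests on two observations. First, since $X^{\max}_k(s)\in[0,N]$ almost surely, $g_N(x,t)$ takes values in $[0,1]$. The target $\mathbf{1}_{[t,1]}(x)$ is $\{0,1\}$-valued for $x\ne t$, so a Markov-inequality argument reduces convergence in probability to convergence of the expectation: if $\E[g_N(x,t)]\to 1$ then $\P(g_N(x,t)<1-\eps)\le(1-\E[g_N(x,t)])/\eps\to 0$, and symmetrically if $\E[g_N(x,t)]\to 0$ then $\P(g_N(x,t)>\eps)\le\E[g_N(x,t)]/\eps\to 0$. I therefore only need to identify $\lim_N\E[g_N(x,t)]$.

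Second, the generator acts linearly on each coordinate $x\mapsto x_k$, and the parametrisation of the Beta parameters yields $\E[\Theta_k]=(1-\lambda)/2$ uniformly in $k$. Taking expectations, $h_k(s):=\E[X^{\max}_k(s)]$ solves the linear system
\[
\partial_s h_k \;=\; \frac{1+\lambda}{2}\,h_{k-1} \,+\, \frac{1-\lambda}{2}\,h_{k+1} \,-\, h_k\,, \qquad k\in\{1,\dots,N-1\}\,,
\]
with boundary values $h_0(s)=0$, $h_N(s)=N$ and initial datum $h_k(0)=N$ on the interior. By Feynman--Kac, this rewrites as
\[
h_k(s) \;=\; N\bigl(1-\P_k(T_0\le s,\;T_0<T_N)\bigr)\,,
\]
with $(W_u)_{u\ge 0}$ the continuous-time nearest-neighbour random walk on $\{0,\dots,N\}$, of total jump rate $1$, jumping left with probability $(1+\lambda)/2$ and right with probability $(1-\lambda)/2$, absorbed at the endpoints, and $T_0,T_N$ its hitting times of $0$ and $N$.

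The next step is to analyse this biased walk at the scaling $k=\lfloor xN\rfloor$, $s=Nt/\lambda$. The chain has left drift $\lambda$ per unit time so that $\E_k[W_s]=k-\lambda s\sim N(x-t)$, and its diffusive fluctuations $\sqrt{s}=\sqrt{N/\lambda}$ are $o(N)$ under the standing hypothesis $\lambda N\to\infty$. When $x>t$, a Chernoff bound on $\inf_{u\le s}W_u$ should give $\P_k(T_0\le s)=o(1)$, whence $h_k(s)/N\to 1$. When $x<t$ two bounds are needed: the gambler's ruin identity yields $\P_k(T_N<T_0)=O\bigl(((1-\lambda)/(1+\lambda))^{(1-x)N}\bigr)=o(1)$, while the typical hitting time of $0$ from $k$ is $k/\lambda\sim xN/\lambda$, strictly smaller than $s=Nt/\lambda$, so a Chernoff bound on the first-passage time forces $T_0/s\to x/t<1$ in probability; combining these two estimates gives $\P_k(T_0\le s,\,T_0<T_N)\to 1$ and hence $h_k(s)/N\to 0$.

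The hardest point will be keeping these random-walk estimates quantitative uniformly in $\lambda$ throughout the regime $\lambda\gg 1/N$: since the drift itself is allowed to vanish, one has to check that the ballistic displacement $\lambda s\sim Nt$ genuinely dominates the fluctuations of size $\sqrt{N/\lambda}$, and that $((1-\lambda)/(1+\lambda))^{(1-x)N}\le\exp(-2\lambda(1-x)N+O(\lambda^2 N))$ really decays. Once these quantitative bounds are in place, the Markov-inequality reduction from the first paragraph closes the proof.
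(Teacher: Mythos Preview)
Your argument is correct and follows a genuinely different route from the paper. You reduce to the expectation (as the paper does), derive the same linear ODE for $h_k(s)=\E[X^{\max}_k(s)]$, but then invoke a Feynman--Kac representation $h_k(s)=N\bigl(1-\P_k(T_0\le s,\;T_0<T_N)\bigr)$ for the biased nearest-neighbour walk absorbed at the endpoints, and finish with standard random-walk large deviations (gambler's ruin for $\P_k(T_N<T_0)$, Chernoff/Doob for the hitting-time tails). All the estimates you outline go through uniformly over the whole regime $\lambda N\to\infty$: the gambler's ruin bound $r^{-(1-x)N}$ decays whether $\lambda$ is fixed or vanishing, and for the Chernoff bounds the relevant exponent scales like $\lambda N$, exactly the quantity assumed to diverge.

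The paper instead treats $u^N(k/N,t):=\tfrac1N\E[X^{\max}_k(tN/\lambda)]$ as the solution of a monotone finite-difference scheme and exhibits explicit barrier functions
\[
f^N_\pm(k,t)=\pm(N\lambda)^{2/3}\Bigl(\pm\tfrac{k}{N}\mp t+(N\lambda)^{-1/3}\Bigr)_+^2\;\pm\;\frac{t}{(N\lambda)^{1/3}}\;+\;\text{const},
\]
checked by hand to be sub- and super-solutions, which squeeze $u^N$ to $\mathbf{1}_{[t,1]}$. This PDE/comparison approach mirrors the machinery of Section~\ref{Sec:HJ} and Subsection~\ref{Subsec:CVODE} used for the sharper hydrodynamic limit. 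Your probabilistic approach is more self-contained and arguably more transparent for this particular statement, while the paper's barrier argument keeps the proof stylistically uniform with the rest of the lower-bound analysis and avoids any appeal to random-walk estimates.
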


Simple concentration estimates on the invariant measure combined with this convergence thus yield a lower bound on the mixing times given by $N/\lambda$. This is enough for Theorem \ref{Th:2}, which covers the case where $\lambda$ is independent of $N$, but for Theorem \ref{Th:Main} it falls short of the expected result by a factor $4$.\\

This can be explained easily. The equilibrium profile is ``degenerate'': under $\pi_N$, the $k$-th coordinate $x_k$ is of order $N r^{k-N}$, where $r = (1+\lambda)/(1-\lambda)$, and this quantity vanishes except for $k$ close enough to $N$. The convergence of $g_N$ to $g$ only implies that after time $N/\lambda$, the $k$-th coordinate is negligible compared to $N$ but it does provide any further control on how close it is from its equilibrium value. In other words, the macroscopic scaling involved in the definition of $g_N$ is too rough to capture the details of the microscopic equilibrium.\\

This discussion motivates the introduction of a transformation of the height function that produces non-trivial quantities in the large scale limit. This is achieved by the following non-linear transformation:
\begin{align*}
	T:[0,N]&\too [0,1]\\
	u&\longmapsto \frac{-1}{N}\log_r\left(\frac{u}{N} (1-r^{-N})+r^{-N}\right)\;.
\end{align*}
Note that $k\mapsto \pi(x_k) \sim N r^{k-N}$, which is degenerate in the limit $N\to\infty$, is mapped onto $k\mapsto T(\pi(x_k)) = 1 - k/N$. We then set for all $x\in [0,1]$ and all $t\ge 0$
$$h_N(x,t) := T\left(X^{\max}_{\lfloor xN\rfloor}\Big(t\frac{N}{\lambda}\Big)\right)\;,\quad x\in [0,1]\;,t\geqslant 0\;.$$

\begin{thm}\label{Th:PDE}
	Assume that $\log N / N \ll \lambda \ll 1$. For all $\eps>0$ and $t\geqslant 0$, the following convergence holds in probability
	$$\lim_{N\to\infty} \sup_{x\in [\eps,1]} |h_N(x,t) - S(x,t)| = 0\;,$$
	where $S$ is the unique viscosity solution of
	$$ \begin{cases} \partial_t f+\partial_x f+(\partial_x f)^2=0\;, \quad x\in (0,1)\;,\quad t>0\;,\\
		f(t,0) = 1\;,\quad f(t,1) = 0\;,\quad f(0,x) = 0\;,\end{cases}$$
	which happens to be explicitly given by
	$$ S(x,t)=\min\left(1-x,\frac{1}{4t}((t-x)_+)^2\right) \;.$$
\end{thm}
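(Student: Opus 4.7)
The plan is to linearise the dynamics via a Cole--Hopf-type change of variables. Set $v_k(t) := r^{-N H_k(t)}$ with $H_k := T(X_k)$; the explicit form of $T$ yields the affine identity $v_k = a X_k + r^{-N}$ with $a := (1 - r^{-N})/N$, so the resampling rule $X_k \mapsto (1-\Theta_k)X_{k-1} + \Theta_k X_{k+1}$ transforms into the identical linear rule $v_k \mapsto (1-\Theta_k)v_{k-1} + \Theta_k v_{k+1}$, with constant-in-time boundary values $v_0 \equiv r^{-N}$, $v_N \equiv 1$ and initial data $v_k(0) = 1$. Working with $v$ in place of $h_N$ reduces a nonlinear Hamilton--Jacobi problem to a linear one, at the price of having to track exponentially small quantities.

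Because the generator is linear in the configuration, the means $\bar v_k(t) := \E[v_k(t)]$ satisfy the closed discrete drift-diffusion equation $\partial_t \bar v_k = \frac{1+\lambda}{2}(\bar v_{k-1} - \bar v_k) + \frac{1-\lambda}{2}(\bar v_{k+1} - \bar v_k)$, and therefore admit a Feynman--Kac representation in terms of a continuous-time biased random walk $W$ on $\{0, \dots, N\}$, absorbed at the endpoints, with leftward rate $(1+\lambda)/2$ and rightward rate $(1-\lambda)/2$. After rescaling time by $N/\lambda$ and setting $k = \lfloor xN \rfloor$, three contributions compete in $\bar v_k$: absorption at $0$ contributes $r^{-N} \asymp e^{-2\lambda N}$; absorption at $N$ contributes $\P(\tau_N \le \tau_0 \wedge tN/\lambda) \asymp e^{-2\lambda N(1-x)}$ by gambler's ruin; and survival contributes $\P(\tau_0 \wedge \tau_N > tN/\lambda) \asymp e^{-\lambda N (t-x)_+^2/(2t)}$ by a Cramér-type bound (whose optimal path is the straight line from $x$ to $0$ in macroscopic time $t$). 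The largest of these determines $\bar v_k$ to leading order, and reading off $-\log_r \bar v_k / N$ with $N\log r = 2\lambda N(1+o(1))$ yields precisely $\min\bigl(1,\, 1-x,\, (t-x)_+^2/(4t)\bigr) = S(x,t)$ in the interior of $(0,1)$.

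It remains (i) to upgrade the convergence from $\bar v_k$ to $v_k$ itself on the logarithmic scale, and (ii) to identify the limit with the unique viscosity solution. For (ii), a natural route is the half-relaxed limits method: compute $\Lf\Phi(h_N)$ for smooth test functions $\Phi$, show via a Taylor expansion that $(N/\lambda)\Lf\Phi \to \partial_t \Phi + \partial_x \Phi + (\partial_x \Phi)^2$ (the viscous correction $(2\lambda N)^{-1}\partial_x^2\Phi$ being negligible under $\lambda N \gg \log N$), and invoke the comparison principle for Hamilton--Jacobi equations to conclude that any subsequential limit of $h_N$ coincides with $S$. The boundary condition at $x = 0$ must be read in the relaxed viscosity sense, in keeping with the boundary layer visible in the explicit formula $S(0,t) = \min(1, t/4)$ and explaining why convergence is only asserted on $[\varepsilon, 1]$. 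For (i), one may exploit the two-point equation for $\E[v_j v_k]$ (a tensor-product biased heat equation off the diagonal), which combined with the a priori bounds $v_k \in [r^{-N}, 1]$ propagates covariance estimates and yields the required multiplicative concentration. Step (i) is the main obstacle: because $\bar v_k$ can be exponentially small in $\lambda N$, the fluctuation control needed is multiplicative rather than additive, and it must remain effective arbitrarily close to the degenerate boundary at $x = 0$.
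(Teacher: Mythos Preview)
Your Cole--Hopf observation is correct and is in fact equivalent to the paper's first scheme: since $v_k = a X_k + r^{-N}$ is affine in $X_k$, your $\bar v_k$ is exactly $r^{-N f_X^N}$ where $f_X^N(k/N,t) := T(\E[X^{\max}_k(tN/\lambda)])$, and your Feynman--Kac computation is a probabilistic alternative to the paper's ODE argument for the convergence $f_X^N \to S$. This part is fine and arguably cleaner than the paper's route via discrete sub/super-solutions.

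The genuine gap is your step (i). Knowing $\E[v_j v_k]$ (even exactly) does not yield the multiplicative lower tail you need: a second-moment bound $\E[v_k^2] \le C(\E v_k)^2$ gives at best $\P(v_k \ge \theta\,\E v_k) \ge (1-\theta)^2/C$ via Paley--Zygmund, i.e.\ positive probability, not probability tending to $1$. What you actually need is $\P(v_k \le r^{-N\eps}\E v_k)\to 0$, and this does not follow from two-point control. Moreover, the interaction on the diagonal of the two-point equation (when $j$ and $k$ are adjacent, the update couples them through the variance of the beta variable, which is of order $r^{-k}$ and hence wildly inhomogeneous) makes even the claimed tensor structure delicate. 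You correctly identify this as the main obstacle but do not resolve it.

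The paper's solution is quite different: it builds an auxiliary process $M$ with the same Poisson clocks as $X^{\max}$ but \emph{deterministic} resampling variables $\frac{1-\lambda-\mu_k}{2}$ chosen just below the typical beta value, so that $M \le X^{\max}$ (hence $T(M) \ge T(X^{\max})$) with high probability. The point is that $\E[T(M_k)]$ can be bounded above by the solution $f_M^N$ of a \emph{second} discrete scheme (via Jensen, since $(x,y)\mapsto T(\alpha T^{-1}(x)+(1-\alpha)T^{-1}(y))$ is concave), and that scheme also converges to $S$. The sandwich $T(\E X^{\max}) - \eps \le T(X^{\max}) \le T(M)$ together with $f_X^N, f_M^N \to S$ then closes the argument. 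This is the missing idea.

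A secondary issue: your step (ii) proposal to compute $\Lf\Phi(h_N)$ directly and read off the Hamiltonian does not work as stated. The generator acts linearly on $X$, not on $h_N = T(X)$; after the nonlinear transformation $T$, the resampling rule for $h_N$ is not a local averaging and a naive Taylor expansion of $\Lf\Phi(h_N)$ does not produce $(\partial_x\Phi)^2$. The paper explicitly flags this route as intractable and instead works at the level of the discrete schemes $f_X^N$, $f_M^N$, proving that their half-relaxed limits are viscosity sub/super-solutions of the Hamilton--Jacobi equation and invoking the comparison principle.
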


This convergence provides the desired lower bound on the mixing times. Indeed, $S$ reaches the invariant profile $x\mapsto 1-x$ at time $4$ (that is, $4N/\lambda$ in the original time scale) and it is a simple task to check that any macroscopic profile which is distinct from this invariant profile corresponds to microscopic configurations that are far from equilibrium.\\

The PDE that appears in the statement is a non-linear Hamilton-Jacobi equation. The notion of viscosity solutions that we employ here is originally due to Crandall and Lions~\cite{CrandallLions}. However, the boundary conditions (which naturally arise from the initial profile, and the fact that $X^{\max}_0(t) = 0$ and $X^{\max}_N(t) = N$) are discontinuous at $(0,0)$, so that, to the best of our knowledge this PDE does not fall into the scope of general existence/uniqueness results. In Section \ref{Sec:HJ} we prove uniqueness of the viscosity solution, and show that $S$ is a solution: this section relies on PDE arguments only and is self-contained.\\

The proof of the convergence of $h_N$ towards $S$ is our main technical achievement. There exist already several works dedicated to proving convergence of discrete stochastic models to Hamilton-Jacobi equations, starting with the seminal articles of Sepp\"al\"ainen~\cite{Seppa} and Rezakhanlou~\cite{Reza2}, see also~\cite{Reza1,Toninelli,Zhang} for instance. In these references, after having showed tightness, the authors identify the limit when the process starts from a product stationary measure and then extend this identification for any initial conditions by approximation. These techniques do not seem easy to adapt to our setting: first, we work in finite volume so dealing with the boundary conditions would require substantial modifications; second, our dynamic is much less regular than those considered in these works, in particular the size of the jumps is not bounded but can be of order $N$ thus making the control of the space-time increments of the process delicate. Therefore we follow another approach, which does not seem to have been used for proving convergence of stochastic models to Hamilton-Jacobi equations: we identify discrete-space approximation of the PDE at the level of our model, and we show that the sequence of solutions converge to a viscosity solution of our PDE. It has some flavour coming from numerical schemes, and our proof exploits the stability of the theory of viscosity solutions under limits.\\

Let us now give some more details. A direct approach would consist in showing that the stochastic evolution equations of $T(X^{\max})$ are asymptotically close to the above PDE, but the non-linearity of $T$ combined with the non-trivial resampling mechanism make the corresponding computations untractable. However, it turns out that $T(\E[X^{\max}])$ satisfies a space-discretization of the PDE of the statement and we prove in Subsection \ref{Subsec:CVODE} that it converges to a viscosity solution. A simple computation then shows that $T(X^{\max}) \ge T(\E[X^{\max}])$ with large probability, so it remains to control $T(X^{\max})$ from above. To that end, we couple $X^{\max}$ with a Markov process $M$ that remains below $X^{\max}$ (equivalently, $T(M)$ remains above $T(X^{\max})$) with large probability, but whose resampling variables are deterministic. We then prove that $\E[T(M)]$ also converges to a viscosity solution of the PDE. The uniqueness of the viscosity solution allows to conclude.

\bigskip

The remaining of this article is organized as follows. In Section \ref{Sec:Ppties} we introduce in more details the model and collect some estimates for the sequel. In Section \ref{Sec:Upper} we prove the upper bounds on the mixing times. In Section \ref{Sec:HJ} we provide the solution theory of the above PDE. Finally in Section \ref{Sec:Lower} we prove Theorem \ref{Th:PDE} and Proposition \ref{Prop:HydroNaive} and thus conclude the lower bounds on the mixing times.

\subsection*{Acknowledgements}
The work of C.L.~is supported by the project SINGULAR ANR-16-CE40-0020-01.

\section{Some properties of the model}\label{Sec:Ppties}

This section introduces precisely the model and collects several properties and estimates that will be needed later on. We start by considering the dynamics on a larger state-space in order to identify a natural class of resampling measures.

\subsection{Dynamics on the unconstrained simplex and reversibility}

We first introduce our dynamics on a larger state-space where the value at the endpoint is not specified
$$S_N^+ := \big\{x=(x_0,...,x_N)\in\R^{N+1}\;,\quad 0=x_0\leqslant x_1\leqslant...\leqslant x_N \big\}\;.$$
Given some continuous\footnote{We exclude the presence of atoms in the resampling measures in order to avoid deterministic resampling events.} probability measures $(\rho_k)_{k=1,\ldots,N-1}$ on $[0,1]$, referred to as the \emph{resampling measures} in the sequel, we consider the continuous-time Markov process with generator
$$ \Lf^+ = \sum_{k=1}^{N-1} (T_k - id)\;,$$
where
$$ T_k f(x) := \int_{[0,1]} f(x^{(k,u)}) \rho_k(du)\;,$$
and 
$$x^{(k,u)}_i=\left\{\begin{array}{ll}
x_i&\text{if }i\not=k\\
x_{i-1}+u(x_{i+1}-x_{i-1})&\text{if }i=k
\end{array} \right. .$$

It is sometimes convenient to deal with the following alternative representation of the state space: for any $x\in S_N^+$, let $\eta \in \R^N_+$ be defined through
$$ \eta_k := x_k - x_{k-1}\;,\quad k=1\;,\ldots, N\;.$$
This mapping defines a bijection between $S_N^+$ and $\R^N_+$. We then keep the same notation for our generator when acting on functions on $\R^N_+$. In particular
$$ T_k f(\eta) := \int_{[0,1]} f(\eta^{(k,u)}) \rho_k(du)\;,$$
and 
$$\eta^{(k,u)}_i=\left\{\begin{array}{ll}
\eta_i&\text{if }i\notin \{k,k+1\}\\
u(\eta_i+\eta_{i+1})&\text{if }i=k\\
(1-u)(\eta_i+\eta_{i+1})&\text{if }i=k+1
\end{array} \right. .$$

In probabilistic terms, the process $(X_t^x,t\ge 0)$, or equivalently $(\eta_t^\eta,t\ge 0)$, with generator $L^+$ evolves as follows: for any $k$, at rate $1$ one draws a r.v.~$U$ according to $\rho_k$ and one resamples $X_k(t-)$ on $X_{k-1}(t-) + U(X_{k+1}(t-) - X_{k-1}(t-))$, or equivalently $(\eta_k(t-),\eta_{k+1}(t-))$ on $(U(\eta_k(t-)+\eta_{k+1}(t-)), (1-U)(\eta_k(t-)+\eta_{k+1}(t-)))$.\\

The following result identifies the family of resampling measures for which the dynamics preserves product laws. Actually, the result is spelled out at the level of a single resampling operator $T_k$.

\begin{prop}
Fix $k\in \{1,\ldots,N-1\}$. If $\mu_k \otimes \mu_{k+1}$ is invariant for the operator $T_k$ (viewed as acting on the pair $(\eta_k,\eta_{k+1})$), that is,
$$ (\mu_k \otimes \mu_{k+1}) T_k = \mu_k \otimes \mu_{k+1}\;,$$
then there exist $\alpha_k, \alpha_{k+1},s \in (0,\infty)$ such that $\rho_k$ is the $\beta(\alpha_k,\alpha_{k+1})$ distribution, $\mu_k$ the $\Gamma(\alpha_k,s)$ distribution and $\mu_{k+1}$ the $\Gamma(\alpha_{k+1},s)$ distribution.
\end{prop}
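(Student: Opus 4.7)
The first step is to reformulate the invariance in probabilistic terms. Unfolding the definition of $T_k$, the hypothesis $(\mu_k \otimes \mu_{k+1}) T_k = \mu_k \otimes \mu_{k+1}$ amounts to the following: if $X \sim \mu_k$, $Y \sim \mu_{k+1}$, $U \sim \rho_k$ are mutually independent and $S := X+Y$, then
$$\big(US,\, (1-U)S\big) \stackrel{d}{=} (X, Y).$$
In particular, setting $X' := US$ and $Y' := (1-U)S$, the random variables $X'$ and $Y'$ are \emph{independent}, with respective laws $\mu_k$ and $\mu_{k+1}$.

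The second step is to recognise the configuration to which Lukacs' characterization theorem for Gamma distributions applies. Observe that
$$\frac{X'}{X' + Y'} = U \quad \text{and} \quad X' + Y' = S,$$
and that $U$ is independent of $S$ by construction, since $U$ was chosen independent of $(X, Y)$. Lukacs' theorem states: if $X', Y'$ are positive, independent, non-degenerate random variables such that $X'/(X'+Y')$ is independent of $X'+Y'$, then there exist $\alpha_k, \alpha_{k+1}, s > 0$ such that $X' \sim \Gamma(\alpha_k, s)$ and $Y' \sim \Gamma(\alpha_{k+1}, s)$. Applied here, it immediately yields $\mu_k = \Gamma(\alpha_k, s)$ and $\mu_{k+1} = \Gamma(\alpha_{k+1}, s)$. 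The classical distributional identity $X'/(X'+Y') \sim \mathrm{Beta}(\alpha_k, \alpha_{k+1})$ for independent Gamma variables with a common scale parameter then gives $\rho_k = \mathrm{Beta}(\alpha_k, \alpha_{k+1})$.

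The only point requiring a brief verification is the non-degeneracy hypothesis needed by Lukacs' theorem. This will follow from the continuity assumption on $\rho_k$ recalled in the footnote: it forces $U$ to be non-constant, and if either $\mu_k$ or $\mu_{k+1}$ were a Dirac mass then the identity $X' = US$ combined with the non-degeneracy of $U$ and the positivity of $S$ would contradict $X' \sim \mu_k$. Once this is secured, the rest of the argument is essentially bookkeeping. The core obstacle is therefore not computational but conceptual: to notice that the invariance of a product measure under one step of the resampling kernel is precisely the independence hypothesis appearing in the Lukacs characterization of Gamma laws.
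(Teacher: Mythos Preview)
Your proof is correct and follows the same approach as the paper: reformulate invariance as the distributional identity $(US,(1-U)S)\stackrel{d}{=}(X,Y)$, deduce that the ratio and the sum are independent, then invoke Lukacs' characterization of the Gamma laws. Your explicit verification of the non-degeneracy hypothesis is a small addition the paper leaves implicit, but the core argument is identical.
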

\begin{proof}
Let $(\eta_k,\eta_{k+1})$ be a random variable with distribution $\mu_k \otimes \mu_{k+1}$. Let $\Theta$ be a random variable independent of $(\eta_k,\eta_{k+1})$ with distribution $\rho_k$. By invariance of $\mu_k \otimes \mu_{k+1}$ for the operator $T_k$, we deduce the equality in distribution:
$$(\eta_k,\eta_{k+1})=\big(\Theta(\eta_k+\eta_{k+1}),(1-\Theta)(\eta_k+\eta_{k+1})\big)\;,$$
which in turn implies
$$\left( \frac{\eta_k}{\eta_k+\eta_{k+1}},\eta_k+\eta_{k+1}\right)=(\Theta,\eta_k+\eta_{k+1}).$$
As $\Theta$ is independent of $(\eta_k,\eta_{k+1})$, we deduce that $\frac{\eta_k}{\eta_k+\eta_{k+1}}$ and $\eta_k+\eta_{k+1}$ are independent. We conclude with the following property of the Gamma laws~\cite{lukacs1955}: if $X,Y$ are two independent random variables and if $\frac{X}{X+Y}$ and $X+Y$ are independent, then there exist $\alpha,\alpha',s > 0$ such that 
\begin{align*}
X\sim\Gamma(\alpha,s),\quad
Y\sim\Gamma(\alpha',s),\quad
\frac{X}{X+Y}\sim\beta(\alpha,\alpha')\;.
\end{align*}
\end{proof}

This being given, suppose that $\mu=\otimes_{k=1}^{N}\mu_k$ is a reversible measure for the above dynamics on $\R_N^+$. It can be checked that necessarily $\mu_k \otimes \mu_{k+1}$ is invariant for the operator $T_k$, and therefore we deduce from the proposition that there exist $\alpha_1,\ldots,\alpha_N,s \in (0,\infty)$ such that $\rho_{k}$ is the $\beta(\alpha_k,\alpha_{k+1})$ distribution and $\mu_k$ is the $\Gamma(\alpha_k,s)$ distribution. Conversely if $\mu$ is a product of $\Gamma(\alpha_k,s)$ distributions and if each $\rho_{k}$ is a $\beta(\alpha_k,\alpha_{k+1})$ distribution then it is not hard to check that each operator $T_k$ (and therefore the generator $\Lf^+$) is self-adjoint in $L^2(\mu)$.\\
This shows that the dynamics on $\R^N_+$ is reversible w.r.t.~a product measure if and only if the $\rho_k$ are $\beta(\alpha_k,\alpha_{k+1})$ distributions for some sequence $(\alpha_k)_k$, and the invariant measure then consists of a product of $\Gamma(\alpha_k,s)$ distributions.\\

Observe that the last particle $x_N$ (or equivalently, $\eta_1+\ldots+\eta_N$) is invariant under the dynamics. So the set 
$S_N^a=\{x\in \R^{N+1},0=x_0\leqslant x_1\leqslant...\leqslant x_N=a\}$ is left invariant by the dynamics. From now on, we will focus on $S_N=S_N^N$ although the choice $a=N$ is arbitrary.

\subsection{The model on the simplex}

From now on, we consider two parameters $\alpha_1 \geqslant 1$ and $\lambda \in [0,1)$ and we set
$$ \alpha_k := \alpha_1 r^{k-1}\;,\quad k\in \{1,\ldots,N-1\}\;,\quad r := \frac{1+\lambda}{1-\lambda}\;.$$
For every $k\in \{1,\ldots,N-1\}$ we let $\rho_k$ be the $\beta(\alpha_k,\alpha_{k+1})$ distribution. In this framework, the resampling laws all have mean $(1-\lambda)/2$. Note that we allow $\lambda$ to depend on $N$, but not $\alpha_1$. We then consider the Markov process generated by the restriction $\Lf$ of $\Lf^+$ to the set of functions from $S_N$ to $\R$. We denote by $(X^x(t),t\ge 0)$ this process when starting from $x\in S_N$.

\begin{lem}\label{Lem:InvMeas}
The dynamics on $S_N$ is reversible w.r.t.~the following probability measure:
$$\pi_N(\dx x)=\frac{\Gamma(\alpha)}{N^{\alpha-1}}\Big(\prod_{i=1}^N \frac{(x_i-x_{i-1})^{\alpha_i-1}}{\Gamma(\alpha_i)} \mathbf{1}_{x_{i-1} \leqslant x_i}\Big) \dx x_1\cdots \dx x_{N-1} \;,$$
where $\alpha := \alpha_1+\ldots+\alpha_N$.\\
Under $\pi_N$, $\frac{\eta_{k}+\eta_{k+1}}{N} \sim \beta(u,\alpha-u)$ with $u=\alpha_k+\alpha_{k+1}$ and $\frac{x_k}{N} \sim \beta(u,\alpha-u)$ with $u=\sum_{i=1}^k \alpha_i$.
\end{lem}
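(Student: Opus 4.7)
The plan is to deduce reversibility on $S_N$ from the reversibility of the unconstrained dynamics on $\R^N_+$ by conditioning on the conserved quantity $x_N = \eta_1 + \cdots + \eta_N$. Indeed, every elementary resampling $T_k$ only redistributes the pair $(\eta_k, \eta_{k+1})$ while preserving its sum, so $x_N$ is preserved by $\Lf^+$; by the preceding proposition, for any fixed $s > 0$, the product measure $\mu_s := \otimes_{k=1}^{N} \Gamma(\alpha_k, s)$ is reversible for $\Lf^+$.

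First I would disintegrate $\mu_s$ along $x_N$. Its joint density reads
\begin{equation*}
\mu_s(\dx \eta) = \Big(\prod_{i=1}^N \frac{s^{\alpha_i}}{\Gamma(\alpha_i)} \eta_i^{\alpha_i - 1}\Big) e^{-s \sum_i \eta_i} \,\dx \eta_1 \cdots \dx \eta_N\;,
\end{equation*}
and under $\mu_s$ the variable $x_N$ has law $\Gamma(\alpha, s)$ with $\alpha := \alpha_1 + \cdots + \alpha_N$. Restricting the density to the affine hyperplane $\{x_N = N\}$ (on which $e^{-s \sum_i \eta_i} = e^{-sN}$) and dividing by the density of $\Gamma(\alpha,s)$ at $N$ cancels all $s$-dependence and yields, after the unit-Jacobian change of variables $\eta_i = x_i - x_{i-1}$, exactly the density $\pi_N$ stated in the lemma. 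Detailed balance of $\Lf^+$ with respect to $\mu_s$ on $\R^N_+$ then restricts to detailed balance of $\Lf$ with respect to $\pi_N$ on $\{x_N = N\}$, since every transition remains on this level set.

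For the marginals, I would invoke the standard aggregation property of the (scaled) Dirichlet distribution: under $\pi_N$, for any subset of indices $I \subseteq \{1,\ldots,N\}$, the partial sum $\tfrac{1}{N}\sum_{i \in I}\eta_i$ follows a $\beta\big(\sum_{i \in I}\alpha_i,\, \alpha - \sum_{i \in I}\alpha_i\big)$ law. Taking $I = \{k, k+1\}$ gives the first marginal, and $I = \{1,\ldots,k\}$, for which $\sum_{i \in I}\eta_i = x_k$, gives the second. The only real obstacle is the bookkeeping involved in the disintegration step; conceptually everything reduces to the pair-reversibility established in the previous proposition combined with standard Gamma/Dirichlet identities.
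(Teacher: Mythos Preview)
Your proposal is correct and follows essentially the same route as the paper: condition the product-Gamma measure $\mu_s$ on the conserved quantity $x_N=N$ to obtain $\pi_N$, inherit reversibility from the unconstrained dynamics, and read off the marginals from standard Gamma/Dirichlet identities. The only cosmetic differences are that the paper justifies self-adjointness of each $T_k$ in $L^2(\pi_N)$ by observing it is an orthogonal projection, and it derives the Beta marginals via the independence of $(\eta_k+\eta_{k+1})/x_N$ and $x_N$ under $\mu_s$ rather than citing Dirichlet aggregation directly; both amount to the same underlying facts.
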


\begin{rem}
	The fact that this is the only invariant measure of the dynamics will be a consequence of the proof of the upper bound on the mixing times.
\end{rem}

\begin{proof}
Fix some $s > 0$, let $\mu_k$ be the $\Gamma(\alpha_k,s)$ distribution and set $\mu=\otimes_{k=1}^{N}\mu_k$. We already know that $\mu$ is reversible for the dynamics on $S_N^+$. Note that under $\mu$, $x_N$ is a $\Gamma(\alpha,s)$ r.v. Now let $\pi_N$ be the measure $\mu$ conditioned to $x_N = N$, or equivalently to $\eta_1+\ldots+\eta_N = N$. If we let $\phi(x,b,s)$ be the density at $x$ of the $\Gamma(b,s)$ law, then $\pi_N$ admits a density (in the $\eta$ variables) on $S_N$ given by
$$ \frac{\prod_{i=1}^N \phi(\eta_i,\alpha_i,s)}{\phi(N,\alpha,s)}\;.$$
Since $\eta_1+\ldots+\eta_N = N$ on $S_N$, a simple computation yields the asserted density. To check that our dynamics on $S_N$ is reversible w.r.t.~$\pi_N$, it suffices to check that each $T_k$ is self-adjoint in $L^2(\pi_N)$. This easily follows from the fact that $T_k$ is the orthogonal projection on the set of functions in $L^2(\pi_N)$ that only depend on $\{x_i: i\ne k\}$.\\
We turn to the second part of the statement. Under $\mu$, the r.v.~$(\eta_k + \eta_{k+1})/ x_N$ and $x_N$ are independent with respective distributions $\beta(\alpha_k+\alpha_{k+1},\alpha - (\alpha_k+\alpha_{k+1}))$ and $\Gamma(\alpha,s)$. Since $\pi_N$ is obtained by conditioning $\mu$ to $\{ x_N = N\}$ we deduce that under $\pi_N$ the r.v.~$(\eta_k+\eta_{k+1})/ x_N$ is distributed according to a $\beta(\alpha_k+\alpha_{k+1},\alpha - (\alpha_k+\alpha_{k+1}))$ law, and this yields the desired result since $(\eta_k+\eta_{k+1})/ x_N = (\eta_k+\eta_{k+1})/ N$ under $\pi_N$. A similar argument yields the law of $x_k/N$ under $\pi_N$.
\end{proof}

\begin{rem}
	The model studied in the present article can be seen as a finite volume version of the Random Average Process studied in~\cite{Balazs}. The hydrodynamic limit obtained in Proposition \ref{Prop:HydroNaive} is in line with the hydrodynamic limit obtained in that article (but in infinite volume).
\end{rem}

\subsection{Some properties of the beta law}

We let $U_k$ be a r.v.~with a $\beta(\alpha_k,\alpha_{k+1})$ distribution and $m_k$ be the maximum of the associated density function. Recall that $\lambda$, and therefore $r$, is allowed to depend on $N$ consequently we will control the dependence in $\lambda$ of the quantities of interest. On the other hand, the value of $\alpha_1$ is fixed and we do not control the dependence in this parameter.

\begin{lem}\label{esperance loi beta}
There are two constants $0 < C_1 < C_2$ such that for all $\lambda$ in a compact set of $[0,1)$, all $k\in \{0,\ldots,N-1\}$ and all $N\ge 2$, the following hold:
\begin{align*}
\E(U_k)=\frac{\alpha_k}{\alpha_k+\alpha_{k+1}}=\frac{1}{1+r} = \frac{1-\lambda}{2}\;,\\
C_1 r^{-k}\leqslant \Var(U_k)\leqslant C_2 r^{-k}\;,\\
C_1 r^{k/2}\leqslant m_k\leqslant C_2 r^{k/2}\;.
\end{align*}
\end{lem}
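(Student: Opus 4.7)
The plan is to treat the three assertions as separate computations on the Beta$(\alpha_k,\alpha_{k+1})$ distribution, exploiting throughout that $\alpha_{k+1}/\alpha_k = r$ and that $r = (1+\lambda)/(1-\lambda)$ is bounded above and below on the compact subset of $\lambda$'s considered. Hence quantities such as $r$, $1+r$, and $r/(1+r)^2$ are uniformly bounded from both sides away from $0$ and $\infty$.

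The expectation is immediate from $\E(U_k) = \alpha_k/(\alpha_k+\alpha_{k+1}) = 1/(1+r)$, and substituting the definition of $r$ gives $(1-\lambda)/2$. For the variance, I start from the classical formula and factor out the geometric part:
\[ \Var(U_k) = \frac{\alpha_k \alpha_{k+1}}{(\alpha_k+\alpha_{k+1})^2(\alpha_k+\alpha_{k+1}+1)} = \frac{r}{(1+r)^2}\cdot\frac{1}{\alpha_k(1+r)+1}. \]
The first factor is uniformly bounded above and below; the denominator of the second is comparable to $r^k$ since $\alpha_k = \alpha_1 r^{k-1}$ with $\alpha_1$ fixed. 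This yields the two-sided bound $C_1 r^{-k} \leqslant \Var(U_k) \leqslant C_2 r^{-k}$.

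For the third assertion, I identify and estimate the maximum of the density. When $\alpha_k > 1$ the Beta density is unimodal with interior mode $u^* = (\alpha_k-1)/(\alpha_k+\alpha_{k+1}-2)$, which tends to $p := 1/(1+r)$ with error $O(1/\alpha_k)$, so that $(u^*/p)^{\alpha_k-1}$ and $((1-u^*)/(1-p))^{\alpha_{k+1}-1}$ are both uniformly bounded. Plugging $u^*$ into
\[ m_k = \frac{\Gamma(\alpha_k+\alpha_{k+1})}{\Gamma(\alpha_k)\Gamma(\alpha_{k+1})}(u^*)^{\alpha_k-1}(1-u^*)^{\alpha_{k+1}-1} \]
and applying Stirling's formula $\Gamma(z) = \sqrt{2\pi/z}\,(z/e)^z(1+O(1/z))$ (uniform for $z\geqslant 1$), the exponential factors $(1+r)^{\alpha_k(1+r)}r^{-r\alpha_k}$ coming from the Beta normalising constant cancel exactly with the factors produced by $p^{\alpha_k-1}(1-p)^{\alpha_{k+1}-1}$, leaving a polynomial prefactor comparable to $\sqrt{\alpha_k}$. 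Since $\sqrt{\alpha_k} = \sqrt{\alpha_1}\,r^{(k-1)/2}$, this gives $m_k$ comparable to $r^{k/2}$.

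Edge cases and the main technical point: if $\alpha_k = 1$ (which, given that $\alpha_1 \geqslant 1$ is fixed, can only happen when $\alpha_1 = 1$ at $k=1$) the mode lies on the boundary and $m_k = \alpha_{k+1} = r$, still comparable to $r^{1/2}$ since $r$ is bounded. For any bounded range of $k$, both $m_k$ and $r^{k/2}$ are continuous functions of the parameters on a compact set and hence comparable, so only the regime $\alpha_k \to \infty$ truly requires Stirling. I expect the main obstacle to be keeping the constants $C_1, C_2$ uniform over $\lambda$ on its compact set; this is resolved by using Stirling with an explicit error term, noting that $\Gamma(\alpha_k)$, $\Gamma(\alpha_{k+1})$ and $\Gamma(\alpha_k+\alpha_{k+1})$ all have arguments $\geqslant 1$.
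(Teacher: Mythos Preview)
Your proof is correct and follows essentially the same route as the paper: the expectation and variance are handled identically, and for $m_k$ both arguments evaluate the density at the mode and apply Stirling's approximation to the Beta normalising constant, observing that the exponential factors cancel and a prefactor of order $\sqrt{\alpha_k}\asymp r^{k/2}$ remains. The only stylistic difference is that the paper packages Stirling via the bounded function $g(x)=\Gamma(x)/(\sqrt{2\pi}\,x^{x-1/2}e^{-x})$ on $[1,\infty)$, which makes the uniformity over all $k$ (including small $k$ where $\alpha_k$ is bounded) automatic, whereas you treat the bounded-$k$ range by a separate compactness remark; both are fine.
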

\begin{proof}
The expression of the expectation is a standard result on beta laws. Regarding the variance, we have
\begin{align*}
\Var(U_k)&=\frac{\alpha_k\alpha_{k+1}}{(\alpha_k+\alpha_{k+1})^2(\alpha_k+\alpha_{k+1}+1)}\\
&=\frac{r}{(1+r)^2(\alpha_1 r^{k-1}+\alpha_1 r^{k}+1)}\;,
\end{align*}
and the upper and lower bounds on the variance easily follow. We turn to the maximum of the density, which is achieved at $x_k := \frac{\alpha_k-1}{\alpha_k+\alpha_{k+1}-2}$. We find
\begin{align*}
m_k&=\frac{\Gamma(\alpha_k+\alpha_{k+1})}{\Gamma(\alpha_k)\Gamma(\alpha_{k+1})}\left(\frac{\alpha_k-1}{\alpha_k+\alpha_{k+1}-2}\right)^{\alpha_k-1}\left(\frac{\alpha_{k+1}-1}{\alpha_k+\alpha_{k+1}-2}\right)^{\alpha_{k+1}-1}\;.
\end{align*}
Set $g(x) := \Gamma(x) / (\sqrt{2\pi}x^{x-1/2}e^{-x})$. A simple computation then shows that
\begin{align*}
	\frac{m_k}{r^{k/2}}&= \frac{(\alpha_k + \alpha_{k+1})^{3/2}}{r^{k/2} \sqrt{2\pi \alpha_k \alpha_{k+1}}} \frac{g(\alpha_k + \alpha_{k+1})}{g(\alpha_k) g(\alpha_{k+1})}  \frac{(1-\frac{1}{\alpha_k})^{\alpha_k-1}(1-\frac{1}{\alpha_{k+1}})^{\alpha_{k+1}-1}}{(1-\frac{2}{\alpha_k+\alpha_{k+1}})^{\alpha_k+\alpha_{k+1}-2}}\\
	&= \frac{\sqrt{\alpha_0}(1+r)^{3/2}}{\sqrt{2\pi r}} \frac{g(\alpha_k + \alpha_{k+1})}{g(\alpha_k) g(\alpha_{k+1})}  \frac{(1-\frac{1}{\alpha_k})^{\alpha_k-1}(1-\frac{1}{\alpha_{k+1}})^{\alpha_{k+1}-1}}{(1-\frac{2}{\alpha_k+\alpha_{k+1}})^{\alpha_k+\alpha_{k+1}-2}}\;.
\end{align*}
Since $g$ is bounded on $[1,\infty)$ and converges to $1$ at $+\infty$, and since $\alpha_k = \alpha_1 r^{k-1}$ is always larger than or equal to $1$, the asserted bound follows.
\end{proof}

The next lemma relates the distribution functions of the gamma and beta distributions.
\begin{lem}\label{absolue continuite}
We let $U\sim\beta(u,v)$ and $Z\sim\Gamma(u,u+v)$. For any $\varrho\in[0,1)$, there exists a constant $C_\varrho > 0$ such that for all $u,v\geqslant 1$ such that $u/(u+v) \leqslant\varrho$ and all $t\in[0,1]$ :
$$\P(U\leqslant t)\leqslant C_\varrho\,\P(Z\leqslant t)\;.$$
\end{lem}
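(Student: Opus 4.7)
The natural approach is to establish the pointwise density comparison $f_U(x)\leq C_\varrho f_Z(x)$ for every $x\in[0,1]$ and then integrate from $0$ to $t$. From the explicit beta and gamma densities one obtains
\begin{equation*}
\frac{f_U(x)}{f_Z(x)} \;=\; \frac{\Gamma(u+v)}{\Gamma(v)\,(u+v)^u}\,(1-x)^{v-1}\,e^{(u+v)x}, \qquad x\in(0,1),
\end{equation*}
so the task reduces to controlling this expression uniformly in $x$ and in the admissible pairs $(u,v)$ with $u,v\geq 1$ and $u/(u+v)\leq \varrho$.

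I would first maximize the $x$-dependent factor $h(x):=(1-x)^{v-1}e^{(u+v)x}$ on $[0,1]$. Assuming $v>1$, elementary calculus locates the maximizer at $x_\star=(u+1)/(u+v)$ and gives
\begin{equation*}
h(x_\star) \;=\; \Bigl(\frac{v-1}{u+v}\Bigr)^{v-1} e^{u+1}.
\end{equation*}
For the prefactor, Stirling's formula yields, for some universal $C>0$,
\begin{equation*}
\frac{\Gamma(u+v)}{\Gamma(v)\,(u+v)^u} \;\leq\; C \Bigl(\frac{u+v}{v}\Bigr)^{v-1/2} e^{-u}.
\end{equation*}
Multiplying the two, the exponentials $e^{-u}$ and $e^{u+1}$ cancel, and after simplification the product is bounded by $Ce\,\sqrt{(u+v)/v}\,(1-1/v)^{v-1}$. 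Since $(1-1/v)^{v-1}\leq 1$ for $v\geq 1$ and the hypothesis $u/(u+v)\leq \varrho$ forces $v/(u+v)\geq 1-\varrho$, one gets $\sup_{x\in[0,1]}f_U(x)/f_Z(x)\leq C_\varrho:=Ce/\sqrt{1-\varrho}$, which is the desired uniform bound.

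The boundary case $v=1$ requires a brief separate treatment because $x_\star$ then sits on the boundary of $[0,1]$ and the Stirling estimate degenerates. In that case the constraint $u/(u+1)\leq \varrho$ forces $u\in[1,\varrho/(1-\varrho)]$, a compact range, so the comparison $\P(U\leq t)=t^u$ with $\P(Z\leq t)\geq e^{-(u+1)}\,t^u/u$ (obtained by bounding $e^{-(u+1)x}\geq e^{-(u+1)}$ on $[0,1]$ inside the gamma integral) gives the required constant by a direct calculation.

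The main technical point is the careful bookkeeping with Stirling so that the factors $e^{\pm u}$ and $(1-1/v)^{v-1}$ recombine into a constant depending only on $\varrho$; once this pointwise density comparison is in hand, the integrated inequality of the statement follows immediately.
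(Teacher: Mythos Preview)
Your proof is correct and follows essentially the same route as the paper: both arguments reduce to the pointwise density inequality $f_U(x)\le c(u,v)\,f_Z(x)$ on $[0,1]$, maximize the factor $(1-x)^{v-1}e^{(u+v)x}$ at $x_\star=(u+1)/(u+v)$, and then control the resulting constant $c(u,v)=\frac{\Gamma(u+v)}{(u+v)^u\Gamma(v)}\bigl(\frac{v-1}{u+v}\bigr)^{v-1}e^{u+1}$ via Stirling. The only cosmetic difference is that the paper bounds $c(u,v)$ by a continuity-on-compacts plus Stirling-at-infinity argument, whereas you carry out the Stirling cancellation uniformly from the start and isolate the edge case $v=1$; both lead to the same bound $C_\varrho\asymp(1-\varrho)^{-1/2}$.
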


\begin{proof}
Note that the map $x\mapsto (1-x)^{v-1}e^{(u+v)x}$ reaches its maximum over $[0,1]$ at $x=\frac{u+1}{u+v}$. We compute:
\begin{align*}
\P(U\leqslant t)&=\frac{\Gamma(u+v)}{\Gamma(u)\Gamma(v)}\int_0^tx^{u-1}(1-x)^{v-1}\dx x\\
&=\frac{\Gamma(u+v)}{\Gamma(u)\Gamma(v)}\int_0^tx^{u-1}e^{-(u+v)x}(1-x)^{v-1}e^{(u+v)x}\dx x\\
&\leqslant\frac{\Gamma(u+v)}{\Gamma(u)\Gamma(v)}\left(\frac{v-1}{u+v}\right)^{v-1}e^{u+1}\int_0^tx^{u-1}e^{-(u+v)x}\dx x\\
&\leqslant\frac{\Gamma(u+v)}{(u+v)^u\Gamma(v)}\left(\frac{v-1}{u+v}\right)^{v-1}e^{u+1}\,\P(Z\leqslant t)\;.
\end{align*}
To conclude, it suffices to show that
$$c(u,v) := \frac{\Gamma(u+v)}{(u+v)^u\Gamma(v)}\left(\frac{v-1}{u+v}\right)^{v-1}e^{u+1}$$
is bounded uniformly over all $u\geqslant 1,v\geqslant 1$ such that $\frac{u}{u+v}\leqslant\varrho$. By continuity, this obviously holds over compact sets in $u$ and $v$. It remains to bound $c(u,v)$ at infinity. Note that the condition on $u$ and $v$ implies that if $u+v\to\infty$ then $v\to\infty$: consequently, it is sufficient to bound $c(u,v)$ when $v\to\infty$ and uniformly over all $u \ge 1$.\\
Recall that $\Gamma(x)\sim \sqrt{2\pi}x^{x-1/2}e^{-x}$ as $x$ goes to $\infty$. As $v\to\infty$ and uniformly over all $u \ge 1$, we have
\begin{align*}
c(u,v)&\sim\frac{(u+v)^{u+v-1/2}e^{-u-v}}{(u+v)^uv^{v-1/2}e^{-v}}\left(\frac{v-1}{u+v}\right)^{v-1}e^{u+1}\\
&\sim e(u+v)^{1/2}\frac{(v-1)^{v-1}}{v^{v-1/2}}\\
&\sim e(u+v)^{1/2}v^{-1/2}\;.
\end{align*}
As $e(u+v)^{1/2}v^{-1/2}\leqslant e(1-\varrho)^{-1/2}$ we deduce that $c(u,v)$ is uniformly bounded.
\end{proof}

We now state a deviation estimate on the beta law. Recall from Lemma \ref{esperance loi beta} that the variance of $U_k$ is of order $r^{-k}$.

\begin{lem}\label{concetration beta}
For any $c>0$, there exists $C > 0$ such that for all $\lambda$ in a compact set of $[0,1)$, all $k\in\{0,\ldots,N-1\}$ and all $N\ge 2$
$$\P\left(U_k \leqslant \E(U_k)-C\log(N) r^{-k/2}\right)\leqslant N^{-c}\;.$$
\end{lem}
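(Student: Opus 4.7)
The plan is to reduce the estimate to a Gamma tail bound via Lemma \ref{absolue continuite} and then apply a standard Chernoff argument. Let $Z \sim \Gamma(\alpha_k, \alpha_k + \alpha_{k+1})$, whose mean coincides with $\E(U_k) = (1-\lambda)/2$. Since $\alpha_k, \alpha_{k+1} \ge 1$ (as $\alpha_1 \ge 1$ and $r \ge 1$) and $\alpha_k/(\alpha_k + \alpha_{k+1}) = (1-\lambda)/2 \le 1/2$, Lemma \ref{absolue continuite} applied with $\varrho = 1/2$ yields a constant $C_0 > 0$ such that $\P(U_k \le t) \le C_0\, \P(Z \le t)$ for every $t \in [0,1]$. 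So it suffices to prove the corresponding tail bound for $Z$, and the problem is reduced to a one-dimensional Gamma computation.

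Set $y := C\log(N)\, r^{-k/2}$. If $\E(U_k) - y < 0$, the event is empty and the bound is trivial; so I may assume $0 < y \le \E(U_k)$. Writing $s := \alpha_k + \alpha_{k+1}$, the Laplace transform $\E[e^{-\theta Z}] = (s/(s+\theta))^{\alpha_k}$ combined with Markov's inequality gives, for every $\theta > 0$,
\[
\P(Z \le \E Z - y) \le \exp\bigl( \theta(\E Z - y) - \alpha_k \log(1 + \theta/s) \bigr).
\]
Substituting $\beta := \theta/s$ and invoking the elementary inequality $\beta - \log(1+\beta) \le \beta^2/2$ valid for all $\beta \ge 0$, the right-hand side is bounded by $\exp(\alpha_k \beta^2 / 2 - sy\beta)$. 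Optimizing over $\beta > 0$ at $\beta^\ast = sy/\alpha_k$ gives
\[
\P(Z \le \E Z - y) \le \exp\Bigl( -\frac{s^2 y^2}{2\alpha_k} \Bigr).
\]

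It then remains to plug in $\alpha_k = \alpha_1 r^{k-1}$ and $s = \alpha_1 r^{k-1}(1+r)$, which yields
\[
\frac{s^2 y^2}{2\alpha_k} = \frac{\alpha_1 (1+r)^2 C^2}{2r} \log^2(N).
\]
A key observation is that this exponent does not depend on $k$, so uniformity in $k$ comes for free. Since $r$ varies in a compact subset of $[1,\infty)$, the prefactor of $C^2 \log^2(N)$ is bounded below by a positive constant depending only on $\alpha_1$ and on the compact set containing $\lambda$. Using $\log^2(N) \ge (\log 2) \log(N)$ for $N \ge 2$, taking $C$ large enough (depending on $c$, $\alpha_1$ and the compact) gives $\P(Z \le \E Z - y) \le C_0^{-1} N^{-c}$, and the conclusion follows from the first step.

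There is no genuine obstacle: the variance estimate in Lemma \ref{esperance loi beta} already predicts sub-Gaussian tails of order $r^{-k/2}$, and the comparison with the Gamma law provides them explicitly. The only bookkeeping is to check that the constants can be chosen uniformly over $k$ and over $\lambda$ in the compact, which is transparent from the final expression above.
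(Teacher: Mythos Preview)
Your proof is correct and follows essentially the same approach as the paper: reduce to a Gamma tail via Lemma~\ref{absolue continuite}, then apply a Chernoff bound using the inequality $\beta-\log(1+\beta)\le \beta^2/2$. The only cosmetic difference is that you optimize over the Chernoff parameter to obtain the clean exponent $s^2y^2/(2\alpha_k)=\alpha_1(1+r)^2C^2\log^2(N)/(2r)$, whereas the paper plugs in the specific choice $t=-r^{k/2}C\log N$ and arrives at a slightly messier (but equivalent) lower bound on the rate function.
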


\begin{proof}
Set $u_k := \E(U_k)-C\log(N)r^{-k/2}$ for some constant $C>0$ that will be adjusted later on. Since $\frac{\alpha_k}{\alpha_k+\alpha_{k+1}}=\frac{1}{1+r}$ and $r\ge 1$, we can apply Lemma \ref{absolue continuite} with $\varrho=\frac{1}{2}$ and $Z_k \sim \Gamma(\alpha_k,\alpha_k+\alpha_{k+1})$ to obtain
$$\P\left( U_k\leqslant u_k\right) \leqslant C_\varrho \,\P\left(Z_k \leqslant u_k \right)\;.$$
The exponential moments of the gamma distribution are given by $\E[e^{tZ_k}] = \big( 1 - \frac{t}{\alpha_k+\alpha_{k+1}}\big)^{-\alpha_k}$. By the Chernoff inequality we deduce:
$$\P\left( Z_k \leqslant u_k\right)\leqslant \exp\left(-I(u_k)\right)\;,$$
with
$$ I(x)=\sup_{t\leqslant 0}\left\{ xt + \alpha_k \log\left(1-\frac{t}{\alpha_k+\alpha_{k+1}}\right) \right\}\;.$$
Taking $t = -r^{k/2}C\log(N)$, we find
\begin{align*}
I(u_k)&\geqslant -\E(U_k)r^{k/2}C\log(N)+(C\log(N))^2+\alpha_k\log\left(1+\frac{r^{k/2}C\log(N)}{\alpha_k+\alpha_{k+1}}\right)
\end{align*}
Applying the bound $\log(1+x)\geqslant x-\frac{x^2}{2}$ and recalling that $\E(Y_k)=\frac{\alpha_k}{\alpha_k+\alpha_{k+1}}$, we find
\begin{align*}
I(u_k)&\geqslant -\frac{\alpha_k}{\alpha_k+\alpha_{k+1}}r^{k/2}C\log(N)+(C\log(N))^2\\&+\frac{\alpha_k}{\alpha_k+\alpha_{k+1}}r^{k/2}C\log(N)-\frac{\alpha_k}{2(\alpha_k+\alpha_{k+1})^2}r^k(C\log(N))^2\\
&\geqslant (C\log(N))^2\left(1-\frac{r}{2\alpha_0 (1+r)^2}\right)\;.
\end{align*}
Provided $C$ is large enough, this last quantity is larger than $c\log (N)$ uniformly over all parameters.
\end{proof}

Our last result concerns a tail estimate on the variables $\eta_k$ under $\pi_N$. We will use the notation
$$\nabla x_k :=x_{k+1}-x_{k-1}=\eta_{k}+\eta_{k+1}.$$

\begin{lem}\label{control du gradiant}
Fix $c>0$. There exist two constants $C',C'' > 0$ such that for all $N$ large enough, for all $\lambda \in [c/N,1-c]$ and for all $k\in \{1,\ldots,N-1\}$ we have $\pi_N(\nabla x_k\leqslant C'N^{-4}\lambda r^{k-N})\leqslant C''N^{-5}$.
\end{lem}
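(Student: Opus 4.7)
The plan is to reduce the statement to a single-variable lower-tail estimate for a beta random variable, and then to control that tail by a direct integration of its density.

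By Lemma~\ref{Lem:InvMeas}, under $\pi_N$ the ratio $\nabla x_k/N = (\eta_k + \eta_{k+1})/N$ has distribution $\beta(u, \alpha-u)$ with $u := \alpha_k + \alpha_{k+1} = \alpha_1 r^{k-1}(1+r)$ and $\alpha := \sum_{i=1}^N \alpha_i = \alpha_1 (r^N-1)/(r-1)$, so the event in the statement becomes $\{\beta(u, \alpha-u) \leqslant \epsilon\}$ with threshold $\epsilon := C' N^{-5}\lambda r^{k-N}$. Using $r - 1 = 2\lambda/(1-\lambda)$, a direct computation yields
$$\alpha\,\epsilon = \frac{\alpha_1 C'(1-\lambda)}{2} N^{-5}(r^k - r^{k-N}) \leqslant \frac{\alpha_1 C'}{2} N^{-5} r^k\;,$$
and combined with the elementary lower bound $u \geqslant \alpha_1 r^k$ (obtained by dropping the $\alpha_1 r^{k-1}$ term), this produces the crucial uniform control $\alpha\epsilon/u \leqslant (C'/2)\,N^{-5}$.

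I would then bound the beta lower tail by crudely estimating $(1-x)^{v-1}\leqslant 1$ in the incomplete beta integral (valid since $v := \alpha - u \geqslant (N-2)\alpha_1 \geqslant 1$ for $N\geqslant 3$), to get
$$\pi_N(\nabla x_k/N \leqslant \epsilon) \leqslant \frac{\Gamma(u+v)}{\Gamma(u+1)\Gamma(v)}\,\epsilon^u\;.$$
Stirling's formula gives $\Gamma(u+v)/\Gamma(v)\leqslant C(u+v)^u$ and $\Gamma(u+1)\geqslant c(u/e)^u$ uniformly over $u,v\geqslant 1$, so the right-hand side is at most $(C/c)\bigl(e\alpha\epsilon/u\bigr)^u \leqslant (C/c)\bigl(eC' N^{-5}/2\bigr)^u$. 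Since $u \geqslant 2\alpha_1 \geqslant 2$ and, for $N$ large enough, $eC' N^{-5}/2 \leqslant 1$, this quantity is largest at $u=2$ and is therefore bounded by a constant multiple of $N^{-10}$, which is amply below the required $C''N^{-5}$.

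The main obstacle to navigate is that Lemma~\ref{absolue continuite} cannot be applied uniformly in $k$: for $k$ close to $N$ and $\lambda$ close to $1$, the mean $u/\alpha$ approaches $1$, violating the hypothesis $u/(u+v)\leqslant\varrho<1$. The direct-integration route above sidesteps this issue, and what makes it succeed uniformly in $k$ and $\lambda$ is the exact cancellation between the exponentially small factor $r^{k-N}$ present in the threshold and the exponential growth $u\geqslant \alpha_1 r^k$; this turns $\alpha\epsilon/u$ into a $k$- and $\lambda$-independent quantity of order $N^{-5}$, which combined with the a priori lower bound $u\geqslant 2$ delivers the desired polynomial decay.
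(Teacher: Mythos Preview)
Your proof is correct and gives a somewhat stronger bound ($N^{-10}$ rather than $N^{-5}$). However, your stated motivation for avoiding Lemma~\ref{absolue continuite} is not quite right: the hypothesis $\lambda\le 1-c$ bounds $r=(1+\lambda)/(1-\lambda)$ uniformly, and since the $\alpha_i$ are increasing one has, for any $k\le N-1$ and $N\ge 3$,
\[
\frac{u}{\alpha}\;\le\;\frac{\alpha_{N-1}+\alpha_N}{\alpha_{N-2}+\alpha_{N-1}+\alpha_N}
\;=\;1-\frac{1}{1+r+r^2}\;\le\;\varrho<1
\]
with $\varrho$ depending only on $c$. So the paper's route through Lemma~\ref{absolue continuite} is legitimate.

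That said, the two arguments are genuinely different. The paper transfers the beta lower tail to a gamma lower tail via Lemma~\ref{absolue continuite} and then applies an exponential Markov (Chernoff) bound, optimising over the tilt parameter. You instead bound the incomplete beta integral directly by dropping the factor $(1-x)^{v-1}$ and controlling the beta normalisation with Stirling, which reduces everything to the single scalar $e\alpha\epsilon/u$. Your approach is more elementary (no auxiliary comparison lemma, no Chernoff optimisation) and exploits very cleanly the algebraic cancellation between the $r^{k-N}$ in the threshold and the $r^k$ in $u$; the paper's approach is more robust in that it would adapt to thresholds not tuned so precisely to the mean. Both are valid here.
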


\begin{proof}
Under the measure $\pi_N$, we have $\nabla x_k / N\sim \beta(u,\alpha-u)$ with
\begin{align*}
u=\alpha_k+\alpha_{k+1}=\alpha_0 r^k(1+r)\;,\quad \alpha = \sum_{i=1}^N\alpha_i=\alpha_0 r\frac{r^N-1}{r-1}\;.
\end{align*}
Using the assumption on $\lambda$, a simple calculation shows that there exists $\varrho \in (0,1)$ and $C'>0$ such that $C' \lambda r^{k-N} \le u/(u+v) < \varrho$ for all $k\in \{1,\ldots,N-1\}$ and all $N\ge 2$. We apply Lemma \ref{absolue continuite} with $u=\alpha_k+\alpha_{k+1}$ and $v=\alpha-u$. We let $Z\sim\Gamma(u, u+v)$ and we compute for all $y>0$ and $t>0$
\begin{align*}
\pi_N(\nabla x_k\leqslant t)&\leqslant C_\varrho\, \P(NZ\leqslant t)\\
&\leqslant C_\varrho\,\P\left(e^{-NyZ}\geqslant e^{-yt}\right)\\
&\leqslant C_\varrho\, e^{yt}\E\left(e^{-NyZ}\right)\\
&\leqslant C_\varrho\, e^{yt}\left(1+\frac{Ny}{u+v}\right)^{-u}\;.
\end{align*}
Take $y=\frac{N^4(u+v)}{u}$ and $t=C'N^{-4}\lambda r^{k-N}$. Using the lower bound on $u/(u+v)$ together with the fact that $u \geqslant 1$ and the function $-u\log\left(1+\frac{N^5}{u}\right)$ is non increasing in $u$ on $\R^+$, we obtain
\begin{align*}
\pi_N(\nabla x_k\leqslant t)&\leqslant C_\varrho \exp\left(\frac{(u+v) C'\lambda r^{k-N}}{u}\right)\exp\left(-u\log\left(1+\frac{N^5}{u}\right)\right)\\
&\leqslant C_\varrho \frac{e}{1+N^5}\;.
\end{align*}
\end{proof}

\subsection{An explicit eigenvalue/eigenfunction of $\Lf$}

Let us provide an explicit eigenvalue of the generator $\Lf$, which is a self-adjoint operator on $L^2(S_N,\pi_N)$ (as a finite sum of orthogonal projectors).

\begin{lem}\label{Lemma:Eigen}
	Set
	$$ \gamma_N := -\left(1-\sqrt{1-\lambda^2}\cos\left(\frac{\pi}{N}\right)\right)\;,\quad f_N=\sum_{k=1}^{N-1}r^{-k/2}\sin\left(\frac{k\pi}{N}\right) g_k\;,$$
	where $g_k(x)=x_k-\overline{x}_k$ and $\overline{x}_k=N\frac{r^k-1}{r^N-1}$. For any $N\ge 2$, $\gamma_N$ is an eigenvalue of $\Lf$ associated to the eigenfunction $f_N$.
\end{lem}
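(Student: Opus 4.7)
The natural first step is to compute the action of $\Lf$ on the coordinate maps $x\mapsto x_k$. Since the resampling operator $T_j$ only modifies the $j$-th coordinate, one has $T_j(x\mapsto x_k) = x_k$ for $j\ne k$, whereas
$$T_k(x\mapsto x_k)(x) = (1-\E[\Theta_k])\, x_{k-1} + \E[\Theta_k]\, x_{k+1} = b\, x_{k-1} + a\, x_{k+1},$$
with $a := \E[\Theta_k] = \alpha_k/(\alpha_k+\alpha_{k+1}) = 1/(1+r)$ and $b := 1 - a = r/(1+r)$. Hence $\Lf(x\mapsto x_k) = b\, x_{k-1} - x_k + a\, x_{k+1}$. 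A direct check shows that $\bar x_k$ satisfies the discrete harmonic relation $b\,\bar x_{k-1} - \bar x_k + a\,\bar x_{k+1} = 0$ with boundary values $\bar x_0 = 0$ and $\bar x_N = N$ (the characteristic polynomial $a t^2 - t + b$ has roots $1$ and $r$, so $\bar x_k = A r^k + B$ with the prescribed endpoints yields exactly the given expression). Subtracting this from the previous identity produces
$$\Lf g_k = b\, g_{k-1} - g_k + a\, g_{k+1}, \qquad k\in\{1,\ldots,N-1\},$$
with the convention $g_0 \equiv g_N \equiv 0$, which is automatic from $x_0 = 0$, $x_N = N$ and the definition of $\bar x_k$.

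Set $c_k := r^{-k/2}\sin(k\pi/N)$, so that $f_N = \sum_{k=1}^{N-1} c_k\, g_k$. By linearity and a re-indexing of the sum, the previous identity yields
$$\Lf f_N = \sum_{j=1}^{N-1}\bigl(b\, c_{j+1} + a\, c_{j-1} - c_j\bigr)\, g_j,$$
the boundary terms $c_0$ and $c_N$ (which vanish anyway) being harmless because $g_0 \equiv g_N \equiv 0$. It remains to verify the eigenvalue identity $b\, c_{j+1} + a\, c_{j-1} = \sqrt{1-\lambda^2}\cos(\pi/N)\, c_j$. Using $b\, r^{-1/2} = a\, r^{1/2} = \sqrt r/(1+r)$ one factors out the exponentials in $r$ to get
$$b\, c_{j+1} + a\, c_{j-1} = \frac{\sqrt r}{1+r}\, r^{-j/2}\bigl[\sin((j+1)\pi/N) + \sin((j-1)\pi/N)\bigr],$$
and the sum-to-product formula gives $2\sin(j\pi/N)\cos(\pi/N)$. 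Combined with the elementary identity $2\sqrt r/(1+r) = \sqrt{1-\lambda^2}$ (immediate after substituting $r = (1+\lambda)/(1-\lambda)$), this produces precisely the factor $\sqrt{1-\lambda^2}\cos(\pi/N)$, hence $\Lf f_N = \gamma_N f_N$.

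The only non-routine observation is the first one: the shift by $\bar x_k$ is tuned exactly to cancel the inhomogeneous constant that otherwise appears in $\Lf(x\mapsto x_k)$, thereby turning $\Lf$ into a genuine linear tridiagonal operator on $(g_k)_{0\leqslant k\leqslant N}$ with Dirichlet boundary conditions. Once this reduction is made, the diagonalization by the weighted sine modes $c_k$ is the standard diagonalization of a tridiagonal Toeplitz matrix, and the main calculation boils down to the trigonometric sum-to-product identity plus the algebraic simplification of $2\sqrt r/(1+r)$.
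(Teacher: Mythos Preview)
Your proof is correct and follows essentially the same approach as the paper: compute $\Lf g_k = b\,g_{k-1} - g_k + a\,g_{k+1}$, re-index the sum, and reduce the eigenvalue identity to the sum-to-product formula together with $2\sqrt r/(1+r)=\sqrt{1-\lambda^2}$. Your presentation is arguably a bit more streamlined, since you verify the scalar identity $b\,c_{j+1}+a\,c_{j-1}=\sqrt{1-\lambda^2}\cos(\pi/N)\,c_j$ directly rather than first rewriting $a,b$ in terms of $(1\pm\lambda)/2$ as the paper does, but the substance is identical.
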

It happens that $-\gamma_N$ is actually the spectral gap of $\Lf$, we refer to~\cite{Enguerand}.
\begin{proof}
Recall that $\rho_k$ is the $\beta(\alpha_k,\alpha_{k+1})$ distribution: we keep the notation $\rho_k$ to denote its density. Recall from Lemma \ref{esperance loi beta} that its mean equals $\alpha_k/(\alpha_k+\alpha_{k+1})$. We compute
\begin{align*}
\Lf g_k(x)&=\int_0^1\left((1-u)x_{k-1}+u x_{k+1}-x_k \right)\rho_k(u)\dx u\\
&=x_{k-1}\frac{\alpha_{k+1}}{\alpha_k+\alpha_{k+1}}+x_{k+1}\frac{\alpha_k}{\alpha_k+\alpha_{k+1}}-x_k\\
&=\frac{r}{1+r}x_{k-1}-x_k+\frac{1}{1+r}x_{k+1}\\
&=\frac{r}{1+r}g_{k-1}(x)-g_k(x)+\frac{1}{1+r}g_{k+1}(x),
\end{align*}
where we used the identity $\frac{r}{1+r}\overline{x}_{k-1} + \frac{1}{1+r}\overline{x}_{k+1} = \overline{x}_k$. So we have :
\begin{align*}
\Lf f_N&=\sum_{k=1}^{N-1}r^{-k/2}\sin\left(\frac{k\pi}{N}\right)\left(\frac{r}{1+r}g_{k-1}-g_k+\frac{1}{1+r}g_{k+1}\right)\\
&=\sum_{k=1}^{N-1}r^{-k/2}\sin\left(\frac{k\pi}{N}\right)\left(\frac{1}{2}(g_{k-1}-2g_k+g_{k+1})-\frac{\lambda}{2}(g_{k+1}-g_{k-1})\right)
\end{align*}
Note that $g_0 = g_N \equiv 0$. A simple computation then shows that this last term equals
\begin{align*}
\Lf f_N&=\sum_{k=1}^{N-1}g_k r^{-k/2}\Big[ \sin\left(\frac{k\pi}{N}\right)+\frac{1+\lambda}{2}\frac{1}{\sqrt{r}}\sin\left(\frac{(k+1)\pi}{N}\right)\\
&\qquad\qquad\qquad+\frac{1-\lambda}{2}\sqrt{r}\sin\left(\frac{(k-1)\pi}{N}\right) \Big]\\
&=-\left(1-\sqrt{1-\lambda^2}\cos\left(\frac{\pi}{N}\right)\right) f_N
\end{align*}
using the identities $\frac{1+\lambda}{2}\frac{1}{\sqrt{r}}=\frac{1-\lambda}{2}\sqrt{r}=\frac{\sqrt{1-\lambda^2}}{2}$.
\end{proof}

\subsection{A useful monotone coupling}\label{coupling}

In this subsection, we introduce a coupling between two versions $X^x$ and $X^y$ of our process that start from two configurations $x$ and $y$ that are ordered, that is, $y_i\le x_i$ for every $i\in \{0,\ldots,N\}$. The coupling preserves this order at all times. In addition, it maximizes the probability of merging the two interfaces at every resampling event.

For any $x\in S_N$ and any $k\in \{1,\ldots,N-1\}$, we define the interval $I(x,k)=[x_{k-1},x_{k+1}]$, we note that $\nabla x_k= \eta_k+\eta_{k+1} = |I(x,k)|$ is the length of this interval. Given a segment $[a,b]$, we let $\rho_k([a,b])$ be the distribution $\beta(\alpha_k,\alpha_{k+1})$ rescaled on the segment $[a,b]$ whose density $B_k[a,b]$ is given by
$$ B_k[a,b](u) = \frac{\Gamma(\alpha_k+\alpha_{k+1})}{\Gamma(\alpha_k)\Gamma(\alpha_{k+1})} \frac{(u-a)^{\alpha_k-1} (b-u)^{\alpha_{k+1}-1}}{(b-a)^{\alpha_k+\alpha_{k+1}-1}}\mathbf{1}_{[a,b]}(u)\;.$$
For simplicity we denote $I^x_k(t)=I(X^x(t),k)$.\\ 
Given two configurations $y \le x$ in $S_N$, we set
$$p(t,k)=1-\Vert \rho_k(I^y_k(t))-\rho_k(I^x_k(t))\Vert_{TV}.$$
We set $q(t,k)=1-p(t,k)$. We will abbreviate these into $p$ and $q$ when $t$ and $k$ are clear from the context. Then we define three probability measures $\nu_1(t,k),\nu_2(t,k),\nu_3(t,k)$ with respective densities $g_1,g_2,g_3$ given by 
\begin{align*}
g_1 &=q^{-1}(\rho_k(I^y_k)-\rho_k(I^x_k))_+,\\
g_2 &=p^{-1}\min(\rho_k(I^y_k),\rho_k(I^x_k)),\\
g_3 &=q^{-1}(\rho_k(I^x_k)-\rho_k(I^y_k))_+.
\end{align*}

\begin{figure}
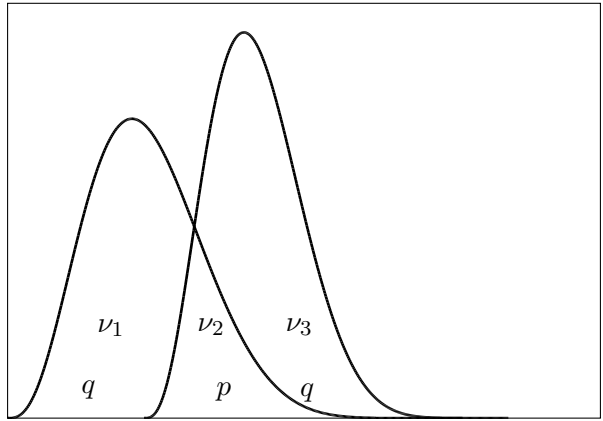

\caption{A plot of $B_k(I^x_k)$ and $B_k(I^y_k)$.}\label{Intersection coupling}
\end{figure}

Recall that $1\leqslant\alpha_k \leqslant\alpha_{k+1}$ for every $k\in \{1,\ldots,N-1\}$. We then claim that there is a unique point where $B_k(I^y_k)$ and $B_k(I^x_k)$ intersect, see Figure \ref{Intersection coupling} for an illustration. Existence is a consequence of continuity and of the fact that these two functions integrate to $1$. Regarding uniqueness, for $u\in I^y_k\cap I^x_k$, the equation $B_k(I^y_k)(u)=B_k(I^x_k)(u)$ yields
\begin{align*}
\left(\frac{u-y_{k-1}}{\nabla y_k}\right)^{\alpha_k-1}\left(\frac{y_{k+1}-u}{\nabla y_k}\right)^{\alpha_{k+1}-1}&=
\left(\frac{u-x_{k-1}}{\nabla x_k}\right)^{\alpha_k-1}\left(\frac{x_{k+1}-u}{\nabla x_k}\right)^{\alpha_{k+1}-1}
\end{align*}
which rewrites
\begin{align*}
\left(\frac{\nabla x_k}{\nabla y_k}\right)^{\alpha_k+\alpha_{k+1}-2}&=\left(\frac{u-x_{k-1}}{u-y_{k-1}}\right)^{\alpha_k-1}\left(\frac{x_{k+1}-u}{y_{k+1}-u}\right)^{\alpha_{k+1}-1}\;.
\end{align*}
Being the product of two increasing positive functions, the right hand side is itself increasing (here we use the inequalities $y_{k-1}\leqslant x_{k-1}$ and $y_{k+1}\leqslant x_{k+1}$). We conclude there is at most one solution of the equation, that we denote $s$.\\
Hence $g_1$ is suppported on $I_1=[y_{k-1},s]$ while $g_3$ is supported on $I_3=[s,x_{k+1}]$.\\

We now introduce our coupling. We consider a collection of independent, rate one Poisson processes indexed by $k\in \{1,\ldots,N-1\}$. If the $k$-th Poisson process rings at time $t$ then:
\begin{itemize}
\item with probability $p(t_-,k)$, we set $X^y_k(t)$ and $X^x_k(t)$ to a same random value drawn from the distribution $\nu_2(t_-,k)$,
\item with probability $q(t_-,k)$, we draw $X^y_k(t)$ according to the distribution $\nu_1(t_-,k)$ and independently we draw $X^x_k(t)$ according to the distribution $\nu_3(t_-,k)$.
\end{itemize}
It is clear that this coupling is order preserving.

\bigskip

The proof of the upper bound on the mixing time requires to estimate $q(t,k)$. To that end, we define $\delta X_k=X_k^x-X_k^y$ and we set
$$\overline{X}_k^x=\frac{(1-\lambda)X_{k+1}^x+(1+\lambda)X_{k-1}^x}{2}$$
which is nothing but the mean value of $X_k^x$ at a resampling event. Finally we define $\delta\overline{X}_k=\overline{X}_k^x-\overline{X}_k^y$.

\begin{lem}\label{controle de Q}
There is a constant $C > 0$ such that for all $\lambda$ in a compact set of $[0,1)$, all $t \ge 0$, all $k \in \{1,\ldots,N-1\}$ and all $N\ge 2$, we have:
$$q(t,k)\leqslant Cr^{k/2}Q(t,k)\;,$$
where
$$Q(t,k)=\frac{\delta\overline{X}_k}{\max(\nabla X_k^x(t_-),\nabla X_k^y(t_-))}.$$
\end{lem}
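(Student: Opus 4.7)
The plan is to recast $q(t,k)$ as a supremum of a CDF difference, bound this difference pointwise via the mean value theorem applied to the beta CDF, and then exploit a sharp algebraic identity linking the relevant ratios. Abbreviating $y:=X^y(t_-)$ and $x:=X^x(t_-)$, let $F$ denote the CDF of the $\beta(\alpha_k,\alpha_{k+1})$ law, extended by $0$ below $0$ and by $1$ above $1$, and set $F_y(u):=F((u-y_{k-1})/\nabla y_k)$ and $F_x(u):=F((u-x_{k-1})/\nabla x_k)$. The monotone coupling of Subsection~\ref{coupling} yields the stochastic ordering $F_y(u)\geqslant F_x(u)$ for every $u$, so that
$$q(t,k)=\Vert\rho_k(I^y_k)-\rho_k(I^x_k)\Vert_{TV}=\sup_u\bigl(F_y(u)-F_x(u)\bigr).$$
Write $\delta_-:=x_{k-1}-y_{k-1}\geqslant 0$ and $\delta_+:=x_{k+1}-y_{k+1}\geqslant 0$; the non-trivial case is that of overlapping intervals, equivalent to $\delta_-\leqslant\nabla y_k$ and $\delta_+\leqslant\nabla x_k$.

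Setting $a(u):=(u-y_{k-1})/\nabla y_k$ and $b(u):=(u-x_{k-1})/\nabla x_k$, a short case analysis on the location of $a(u),b(u)$ relative to $[0,1]$, combined with the mean value theorem and the bound $\|F'\|_\infty=m_k$ from Lemma~\ref{esperance loi beta}, yields the pointwise estimate $F_y(u)-F_x(u)\leqslant m_k\,\psi(u)$, where $\psi$ is the continuous piecewise-linear bump supported on $[y_{k-1},x_{k+1}]$ that rises linearly from $0$ to $\delta_-/\nabla y_k$ on $[y_{k-1},x_{k-1}]$, interpolates linearly between $\delta_-/\nabla y_k$ and $\delta_+/\nabla x_k$ on $[x_{k-1},y_{k+1}]$, and decays linearly from $\delta_+/\nabla x_k$ back to $0$ on $[y_{k+1},x_{k+1}]$. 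Since the maximum of $\psi$ is attained at one of these two corner heights, this gives
$$q(t,k)\leqslant m_k\max\bigl(\delta_-/\nabla y_k,\;\delta_+/\nabla x_k\bigr).$$

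The final step is a sharp algebraic identity. The relation $\nabla x_k-\nabla y_k=\delta_+-\delta_-$ (forced by the definitions) and the overlap bound $\delta_-\leqslant\nabla y_k$ combine to give
$$\frac{\delta_+}{\nabla x_k}-\frac{\delta_-}{\nabla y_k}=\frac{(\delta_+-\delta_-)(\nabla y_k-\delta_-)}{\nabla y_k\,\nabla x_k},$$
whose sign matches that of $\nabla x_k-\nabla y_k$. Inspecting both cases $\nabla x_k\geqslant\nabla y_k$ and $\nabla x_k<\nabla y_k$ yields the crucial identity
$$\max\bigl(\delta_-/\nabla y_k,\;\delta_+/\nabla x_k\bigr)=\frac{\max(\delta_-,\delta_+)}{\max(\nabla y_k,\nabla x_k)}.$$
Plugging this in, using $\delta\overline{X}_k=\frac{1+\lambda}{2}\delta_-+\frac{1-\lambda}{2}\delta_+\geqslant\frac{1-\lambda}{2}\max(\delta_-,\delta_+)$, and invoking $m_k\leqslant C_2\,r^{k/2}$ from Lemma~\ref{esperance loi beta}, I would conclude with $C=2C_2/(1-\lambda)$, a constant that stays bounded uniformly over compact subsets of $[0,1)$.

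The main obstacle is precisely this last identity: any naive estimate produces $\min(\nabla y_k,\nabla x_k)$ in the denominator, which is too weak for the contraction argument in Section~\ref{Sec:Upper}, and recovering the required $\max$ relies crucially on both the structural link $\nabla x_k-\nabla y_k=\delta_+-\delta_-$ (which encodes the geometry of the nearest-neighbour resampling) and the strict overlap condition $\delta_-\leqslant\nabla y_k$. The degenerate case in which $I^y_k$ and $I^x_k$ are disjoint, where $q(t,k)=1$, would be treated separately: in this regime one of the ratios $\delta_-/\nabla y_k,\delta_+/\nabla x_k$ exceeds $1$, so that $\delta\overline{X}_k/\max(\nabla X^x_k,\nabla X^y_k)$ is uniformly bounded below and the claimed inequality holds trivially once $C$ is taken large enough.
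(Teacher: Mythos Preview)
Your proof is correct and takes a genuinely different route from the paper. The paper first uses affine invariance of both $q$ and $Q$ to normalize the larger of the two intervals to $[0,1]$, then shows that $q$ is monotone in the length $b=r_2-\ell_2$ of the second interval, which reduces the problem to the purely translated case $b=1$; there a simple area comparison (Figure~\ref{Fig;s}) gives $q\leqslant\|\rho_k\|_\infty\,\ell_2$. After normalization, $\ell_2$ is exactly $\max(\delta_-,\delta_+)/\max(\nabla y_k,\nabla x_k)$, so the algebraic identity you isolate is absorbed into the choice of coordinates. Your approach instead works directly with the CDFs, bounds $F_y-F_x$ pointwise via the mean value theorem, and then recovers the crucial $\max$ in the denominator through your explicit algebraic identity. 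The paper's argument is more geometric and sidesteps that identity via normalization; yours is more computational but avoids the monotonicity-in-$b$ step.

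Two minor remarks. First, the equality $q=\sup_u\bigl(F_y(u)-F_x(u)\bigr)$ relies on the fact that the two densities cross exactly once (established in Subsection~\ref{coupling}), not merely on stochastic ordering: for general stochastically ordered pairs the Kolmogorov--Smirnov distance can be strictly smaller than total variation. Second, in the disjoint case $x_{k-1}>y_{k+1}$ one actually has \emph{both} $\delta_->\nabla y_k$ and $\delta_+>\nabla x_k$, whence $\max(\delta_-,\delta_+)>\max(\nabla y_k,\nabla x_k)$ and $Q>\tfrac{1-\lambda}{2}$, so the same constant $C=2C_2/(1-\lambda)$ works there as well without further enlargement.
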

\begin{proof}
This lemma does not depend on our dynamics, it is a result on the beta distributions on two intervals $[\ell_1,r_1]$ and $[\ell_2,r_2]$ with $r_1\leqslant r_2$ and $\ell_1\leqslant \ell_2$. By symmetry (the arguments below remain true if one switches $\alpha_k$ and $\alpha_{k+1}$), we can assume that the first interval is larger than the second.  Moreover, the values of $q$ and $Q$ are invariant under any affine transformation of the coordinates so we can assume $\ell_1=0$ and $r_1=1$. We have 
$\ell_2\geqslant 0$ and $1\leqslant r_2\leqslant \ell_2+1$. Recall that $B_k([0,1])$ is the density of the beta distribution. We first prove that $q\leqslant\max(\Vert B_k([0,1])\Vert_\infty,1)\ell_2$. If $\ell_2\geqslant 1$, then the inequality is trivially satisfied, we suppose now $\ell_2<1$. We set $b=r_2-\ell_2$ and recall that $B_k([\ell_2,r_2])$ is the density of the beta distribution on the segment $[\ell_2,r_2]$.

\begin{figure}
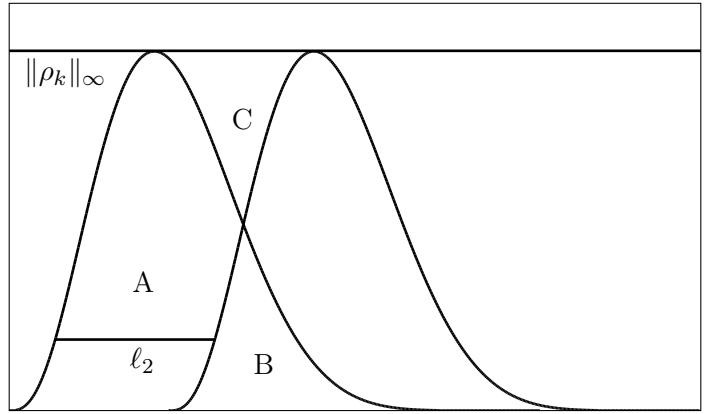


\caption{A plot of $B_k([0,1])$ and $B_k([\ell_2,\ell_2+1])$.}\label{Fig;s}
\end{figure}
We have 
\begin{align*}
q&=\int \Big(B_k([0,1])(u)-B_k([\ell_2,r_2])(u)\Big)_+\dx u\;.
\end{align*} 
A simple computation shows that $B_k([0,1])(u)-B_k([\ell_2,r_2])(u)$ which is explicitly given by
$$  \frac{\Gamma(\alpha_k+\alpha_{k+1})}{\Gamma(\alpha_k)\Gamma(\alpha_{k+1})} \Big( u^{\alpha_k-1}_+ (1-u)^{\alpha_{k+1}-1}_+ - \frac{(u-\ell_2)_+^{\alpha_k-1}(b+\ell_2-u)_+^{\alpha_{k+1}-1}}{b^{\alpha_k+\alpha_{k+1}-1}}\Big)\;,$$
is a non-decreasing function of $b$ whenever $u-\ell_2\leqslant b \frac{\alpha_k}{\alpha_k+\alpha_{k+1}-1}$. On the other hand, we know that the maximum of $u\mapsto B_k([\ell_2,r_2])(u)$ is achieved at $u-\ell_2 = b \frac{\alpha_k-1}{\alpha_k+\alpha_{k+1}-2}$, and that necessarily at that point $B_k([\ell_2,r_2])(u) > B_k([0,1])(u)$. Consequently the unique point $s$ where the two densities meet satisfies $s-\ell_2 \le b \frac{\alpha_k-1}{\alpha_k+\alpha_{k+1}-2} \le b \frac{\alpha_k}{\alpha_k+\alpha_{k+1}-1}$. Since we can restrict the interval of integration to $u \le s$ in the expression of $q$, we deduce that $q$ is non-decreasing with $b$.

It thus suffices to consider the case where $b=1$. We see on Figure \ref{Fig;s} that $q$ is the area of part A and we bound this area by the area of A$\cup$C, which is equal to $\Vert \rho_k\Vert_\infty \ell_2$. Now observe that $\ell_2=\frac{\max(\ell_2-\ell_1,r_2-r_1)}{\max(r_1-\ell_1,r_2-\ell_2)}\leqslant \frac{2}{1-\lambda} Q$. By Lemma \ref{esperance loi beta}, $\Vert \rho_k\Vert_\infty \le C_2 r^{k/2}$ and we can conclude.
\end{proof}

\section{The upper bound}\label{Sec:Upper}
Let $P_t^x$ be the law of the process at time $t$ starting from $x\in S_N$, and let us denote by $P_t^\pi$ the law of the process at time $t$ starting from equilibrium. For $\delta > 0$ we set
$$t_\delta := (1+\delta)\frac{\log\left(\frac{1+\lambda}{1-\lambda}\right)}{1-\sqrt{1-\lambda^2}} N\;.$$
Note that $t_\delta\sim\frac{4N}{\lambda}$ as $N\to\infty$, whenever $\lambda\to 0$. The upper bounds stated in Theorems \ref{Th:2} and \ref{Th:Main} are consequences of the following theorem, which is the main result of this section.

\begin{thm}\label{upper bound}
Assume that $\lambda\gg\frac{\log N}{N}$. For any $\delta > 0$, we have 
$$\sup_{x\in S_N}\Vert P_{t_\delta}^x-\pi\Vert_{TV}\too 0\;,\quad N\to\infty\;.$$
\end{thm}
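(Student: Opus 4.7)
The plan is to execute the two-step Randall--Winkler strategy outlined in the introduction, relying on the monotone coupling of Subsection~\ref{coupling}. By invariance of $\pi_N$ and the triangle inequality, it suffices to bound $\Vert P^{\max}_{t_\delta}-P^{\pi_N}_{t_\delta}\Vert_{TV}$ and $\sup_{x}\Vert P^x_{t_\delta}-P^{\max}_{t_\delta}\Vert_{TV}$ separately. In both cases we have two ordered initial conditions $y\le x$ (namely $(\max,\pi_N)$ for the first term, $(\max,x)$ for the second): the monotone coupling yields $X^y(t)\le X^x(t)$ for all $t$ and merging time $\tau:=\inf\{t\ge 0:X^x(t)=X^y(t)\}$, so that each TV distance is dominated by $\P(\tau>t_\delta)$. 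Fixing $\eta\in(0,\delta)$ small, I split
$$ \P(\tau>t_\delta)\le \P(A(T_1)>\varepsilon_N)+\P\big(\tau>t_\delta\,\big|\, A(T_1)\le \varepsilon_N\big)\;, $$
where $T_1:=(1-\eta)t_\delta$, $A(t):=\sum_k(X^x_k(t)-X^y_k(t))\ge 0$ is the area between the two interfaces, and $\varepsilon_N\to 0$ is a polynomial threshold.

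\emph{First step: spectral bound on the expected area.} Setting $h_k(t):=\E[X^x_k(t)-X^y_k(t)]\ge 0$, the vector $(h_k)_k$ solves the linear ODE induced by the action of $\Lf$ on coordinate functions, whose principal eigenpair is given by Lemma~\ref{Lemma:Eigen}. Hence the non-negative quantity
$$ F(t):=\sum_{k=1}^{N-1}r^{-k/2}\sin\Big(\frac{k\pi}{N}\Big)\,h_k(t) $$
satisfies exactly $F(t)=F(0)\,e^{\gamma_N t}$. Starting from $F(0)=O(N)$ and using the explicit formula for $t_\delta$ together with $\lambda\gg\log N/N$ (which makes the $\cos(\pi/N)$ contribution to $\gamma_N$ negligible with respect to $1-\sqrt{1-\lambda^2}$), one gets $-\gamma_N T_1\ge (1+\delta')N\log r\cdot(1+o(1))$ for some $\delta'>0$ after adjusting $\eta$ depending on $\delta$. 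Thus $F(T_1)\le O(N)\,r^{-(1+\delta')N}$. Non-negativity of each summand in $F$ transfers the decay pointwise as $h_k(T_1)\le F(T_1)\,r^{k/2}/\sin(k\pi/N)$; summing over $k$ gives $\E[A(T_1)]\le C N^3\,r^{-(1/2+\delta')N}$, which is super-polynomially small precisely because $\lambda N\gg\log N$ beats the geometric prefactor $r^{N/2}$. Markov's inequality then controls the first piece with, say, $\varepsilon_N=N^{-C}$ for any fixed $C>0$.

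\emph{Second step: coalescence from small area.} On the event $\{A(T_1)\le \varepsilon_N\}$, every individual gap $\delta X_k(T_1)$ is at most $\varepsilon_N$. By Lemma~\ref{controle de Q}, at a resampling event at site $k$ the probability of a non-merging update satisfies $q(t,k)\le C\,r^{k/2}\,\delta\overline X_k(t)/\nabla X_k(t)$; the numerator inherits the smallness of $A(T_1)$ and the denominator is controlled from below with high probability using Lemma~\ref{control du gradiant} (noting that under $\pi_N$ the typical size of $\nabla X_k$ is of order $N r^{k-N}$, which dominates the factor $r^{k/2}$ exactly for $k$ close to $N$ where the dynamics is most active). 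Each of the $N-1$ sites experiences $\Omega(t_\delta-T_1)=\Omega(\eta\,t_\delta)$ clock rings during the window $[T_1,t_\delta]$, so with overwhelming probability each site eventually produces a merging update. Proceeding sequentially from $k=N-1$ downwards, one shows by an inductive/cascade argument that gaps remain $O(\varepsilon_N)$ throughout the window, and a union bound over the $N-1$ sites then yields $\P(\tau>t_\delta\mid A(T_1)\le \varepsilon_N)\to 0$.

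The main obstacle is the second step. The coupling is not absorbing site-by-site: a coordinate that has merged may get ``unmerged'' at the next ringing of a neighbour's clock, and the factor $r^{k/2}$ entering the bound on $q$ is exponentially large for $k$ close to $N$. Handling this requires exploiting the highly asymmetric equilibrium profile through Lemma~\ref{control du gradiant}, together with a cascade argument ordering the coalescences from the top ($k=N-1$) downward, so that re-separation events at already-merged sites do not produce macroscopic gaps. The assumption $\lambda\gg \log N/N$ is what guarantees, in the first step, that the exponential decay $r^{-(1+\delta')N}$ of $F$ beats the polynomial prefactor $N^3$ (and the geometric $r^{N/2}$) arising when translating the spectral estimate on $F$ into an $L^1$-bound on $A$.
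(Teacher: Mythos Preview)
Your overall architecture matches the paper: triangle inequality, monotone coupling, spectral decay of the twisted eigenfunction combination, then coalescence via Lemma~\ref{controle de Q}. The first step is essentially correct and coincides with the paper's argument. However, there are two genuine gaps in your second step.

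\textbf{Loss of the $k$-dependence of the gap bound.} You correctly observe that non-negativity gives the pointwise estimate $h_k(T_1)\le F(T_1)\,r^{k/2}/\sin(k\pi/N)$, but you then discard it by summing to control the untwisted area $A$, and in Step~2 you only use the uniform bound $\delta X_k\le \varepsilon_N$. This is fatal: plugging $\delta\overline X_k\le \varepsilon_N$ and $\nabla X_k\ge C'N^{-4}\lambda\,r^{k-N}$ into Lemma~\ref{controle de Q} yields $q(t,k)\lesssim r^{k/2}\varepsilon_N/(N^{-4}\lambda\,r^{k-N})=N^4\lambda^{-1}\varepsilon_N\,r^{N-k/2}$, which for small $k$ is of order $r^{N/2}\varepsilon_N$. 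Since Markov only lets you take $\varepsilon_N\gg r^{-(1/2+\delta')N}$, this bound diverges. The paper instead applies Markov coordinate-by-coordinate to obtain the event $\{\delta X_k(t_\delta)\le N^{-8}r^{k/2-N}\;\forall k\}$; this $k$-dependent bound cancels \emph{exactly} against $r^{k/2}$ from Lemma~\ref{controle de Q} and $r^{k-N}$ from Lemma~\ref{control du gradiant}, leaving a uniformly small $q\lesssim N^{-3}$. Once $q$ is that small, the paper simply shows that with high probability \emph{every} update in a window of length $10\log N$ is successful (so gaps are monotone in time) and every site rings at least once; no cascade argument is needed.

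\textbf{The gradient lower bound for the pair $(\max,x)$.} Lemma~\ref{control du gradiant} controls $\nabla X_k$ only under $\pi_N$. For the first pair $(\max,\pi)$ this is fine, taking the denominator to be $\nabla X^\pi_k$. For the second pair $(\max,x)$ with arbitrary $x$, neither interface is at equilibrium at time $T_1$, so you have no a priori lower bound on $\nabla X^{\max}_k$ or $\nabla X^x_k$. The paper does \emph{not} treat the two pairs symmetrically: it first proves \eqref{Eq:CV1}, so that with high probability $X^{\max}(t_\delta+t')=X^\pi(t_\delta+t')$, and then runs the coalescence argument for $(\max,x)$ on the interval $(t_\delta+t',\,t_\delta+2t']$, where $\nabla X^{\max}_k$ now inherits the equilibrium estimate from Lemma~\ref{control du gradiant}. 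Your proposal does not account for this asymmetry, and the vague appeal to a ``cascade argument ordering the coalescences from the top'' does not substitute for it.
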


By the triangle inequality and the stationarity of $\pi$, the theorem follows from the convergences
\begin{equation}\label{Eq:CV1}
\Vert P_{t_\delta}^{\max}-P_{t_\delta}^\pi\Vert_{TV}\too 0\;,
\end{equation}
and
\begin{equation}\label{Eq:CV2}
\sup_{x\in S_N}\Vert P_{t_\delta}^{\max} - P_{t_\delta}^x\Vert_{TV}\too 0\;.
\end{equation}
The rest of this section is devoted to proving them. The proof of \eqref{Eq:CV2} follows from the same arguments as the proof of \eqref{Eq:CV1} but requires \eqref{Eq:CV1} as an input. Let us start with the proof of \eqref{Eq:CV1}.\\

We work under the monotone coupling introduced in Subsection \ref{coupling} and let $(X^{\max},X^\pi)$ be the pair of processes defined under it (conditionally given the realisation $y$ of the initial condition of $X^\pi$, we apply the coupling with $x=\max$ and $y$). Since ${\max}=(N,...,N)$ is the maximal element of $S_N$ for the partial order on $\R^{N-1}$, the monotonicity of our coupling implies that almost surely for all $t\ge 0$, $X^{\max}(t) \ge X^\pi(t)$.\\
Recall that $f_N$ is the eigenfunction identified in Lemma \ref{Lemma:Eigen}. We now set
$$A_t=\sum_{k=1}^{N-1}r^{-k/2}\sin\left(\frac{k\pi}{N}\right)(X^{\max}_k(t)-X^\pi_k(t)) = f_N(X^{\max}(t)) - f_N(X^\pi(t))\;,$$
which can be seen as a twisted area between the two interfaces. Our proof of \eqref{Eq:CV1} consists of two steps
\begin{enumerate}[label=(\roman*)]
\item from the exponential decay of $t\mapsto \E(A_t)$, we deduce that $A_{t_{\delta}}$ is small with large probability (this step is independent of the coupling)
\item we show that, provided the area is small enough, it hits $0$ within a time of order $\log N$ with large probability.
\end{enumerate}
Altogether, this shows \eqref{Eq:CV1} with $t_\delta$ replaced by $t_{2\delta}$, but since $\delta$ is arbitrary this is enough to conclude. The next two subsections present these two steps. The third subsection then presents the proof of \eqref{Eq:CV2}.

\subsection{First step}

Set $\llbracket 1,N-1 \rrbracket := \{1,\ldots,N-1\}$. The goal of this step is to show that $\P(\A_1)\ge 1-N^{-4}$ for all $N$ large enough, where
$$\A_1=\left\{\forall k\in\llbracket 1,N-1 \rrbracket,\; X^{\max}_k(t_\delta)-X^\pi_k(t_\delta)\leqslant N^{-8}r^{k/2-N}\right\}\;.$$
By Lemma \ref{Lemma:Eigen}, for any $t\ge 0$
\begin{align*}
\E[A_t]&=\E\left[f_N(X^{\max}(t))\right]-\E\left[f_N(X^\pi(t))\right]\\
&=e^{\gamma_N t}\,\E\left[f_N(X^{\max}(0))\right]-e^{\gamma_N t}\,\E\left[f_N(X^\pi(0))\right]\\
&=e^{\gamma_N t}\,\E[A_0]\\
&\leqslant e^{\gamma_N t}N\frac{1-r^{-N/2}}{1-r^{-1/2}}.
\end{align*}
Recall that $r=(1+\lambda)/(1-\lambda)$. Since $\lambda\gg\frac{\log(N)}{N}$, we have $\gamma_N=-(1-\sqrt{1-\lambda^2})+O(N^{-2})$. Taking $t=t_\delta$, we get $e^{\gamma_N t_\delta}=e^{-(1+\delta)\log(r)N+O(N^{-1}\lambda^{-1})}\leqslant r^{-(1+\delta/2)N}$ for all $N$ large enough. Moreover, for all $N$ large enough we have
$$ N\frac{1-r^{-N/2}}{1-r^{-1/2}} \le N\frac{1}{1-r^{-1/2}} \le cN \lambda^{-1} \le cN^2\;,$$
which is negligible compared to $r^{\eps N}$ for any given $\eps > 0$. We thus deduce that for all $N$ large enough
$$\E[A_t]\leqslant r^{-(1+\delta/4)N}\;.$$
For $k \in \llbracket 1,N-1 \rrbracket$ we have by the Markov inequality
\begin{align*}
&\P(X^{\max}_k(t_\delta)-X^\pi_k(t_\delta)\geqslant N^{-8}r^{k/2-N})\\
&\leqslant N^8r^{-k/2+N} \E\left[X^{\max}_k(t_\delta)-X^\pi_k(t_\delta)\right]\\
&\leqslant N^8r^{-k/2+N}N\sin\left(\frac{k\pi}{N}\right)\E\left[X^{\max}_k(t_\delta)-X^\pi_k(t_\delta)\right]\\
&\leqslant N^9r^N\E[A_{t_\delta}]= o(N^{-5})
\end{align*}
and this suffices to deduce that $\P(\bar{\A_1}) < N^{-4}$ for all $N$ large enough.

\subsection{Second step}
In this second step, we use the specificities of the monotone coupling of Subsection \ref{coupling} during a time $t'=10\log N$. Under this coupling, we will say that an update is successful if the two updated r.v.~are set to the same value (which occurs with probability $p(t_-,k)$ following the notation from that subsection). We define the events:
\begin{align*}
\A_2&=\left\{\forall k, \text{the $k^{\text{th}}$ coord.~is updated at least once in }(t_\delta,t_\delta+t']\right\}\;,\\
\A_3(t_\delta+t)&=\left\{\text{Every update that occurs in $(t_\delta,t_\delta+t]$ is successful}\right\}\;,
\end{align*}
for all $t\ge 0$.
On the event $\A_2\cap\A_3(t_\delta+t')$, it is easy to check that $X^{\max}_k(t_\delta+t') = X^\pi_k(t_\delta+t')$ for all $k$. To conclude the proof of \eqref{Eq:CV1}, it suffices to show that $\P(\A_2)\too 1$ and $\P(\A_3(t_\delta+t'))\too 1$ when $N$ goes to $\infty$.

\bigskip

We begin with $\A_2$. Let $(\mathcal{P}_k(t),t\ge 0)$ be the Poisson process that counts the number of updates of the $k$-th coordinate from time $t_\delta$ on. We write: \begin{align*}
\P(\A_2)&=\P(\forall k, \mathcal{P}_k(t')\geqslant 1)=\left(1-e^{-10\log(N)}\right)^{N-1}\too 1\;.
\end{align*}

Let us now control $\P(\A_3(t_\delta+t'))$. We denote by $(\tau_i)_{i \ge 1}$ the update times, in the increasing order, from time $t_{\delta}$ on: this is nothing but $t_\delta$ plus the ordered sequence of times at which the Poisson processes $(\mathcal{P}_k(t),t\ge 0)$, $k\in\llbracket 1, N-1\rrbracket$, increase. We will give an upper bound on $\P(\overline{\A}_3(\tau_i))$ by induction on $i\ge 1$ and show that $\P(\overline{\A}_3(\tau_i))$ goes to 0 when $N$ goes to $\infty$ for all $i \ge 1$.\\
Recall that by Lemma \ref{control du gradiant} we have
$$\pi(\nabla x_k\leqslant C'N^{-4}\lambda r^{k-N})\leqslant C''N^{-5}\;.$$
Note that for every $i\ge 1$, $X^\pi(\tau_i)$ is distributed according to $\pi$. Consequently if we set
$$\A_4(t)=\{\forall k,\nabla X_k^\pi(t)\geqslant C'N^{-4}\lambda r^{k-N}\}\;,$$
then for every $i\ge 1$, $\P(\overline{\A_4(\tau_i)})\leqslant N\times C''N^{-5}=C''N^{-4}$ by union bound.\\
For convenience, we let $\tau_0 := t_\delta$ and we note that $\A_3(\tau_0)$ is trivially satisfied. We denote by $\F_t$ the natural filtration associated to the processes $(X^{\max},X^\pi)$ up to time $t$.\\
Now we fix $i\ge 0$ and we bound from above the probability of $\overline{\A}_3(\tau_{i+1})$. Note that on the event $\A_3(\tau_i)$, $\overline{\A}_3(\tau_{i+1})$ is achieved if and only if the update at time $\tau_{i+1}$ is \emph{not} successful. Applying Lemma \ref{controle de Q}, we obtain that on the event $\A_3(\tau_i)$
$$ \P(\overline{\A}_3(\tau_{i+1})|\F_{\tau_i}) \le \sum_{k=1}^{N-1} Cr^{k/2}\frac{\delta\overline{X}_k(\tau_i)}{\nabla X_k^\pi(\tau_i)}\;.$$
On $\A_4(\tau_i)$, it holds $\nabla X_k^\pi(\tau_i)\geqslant C'N^{-4}\lambda r^{k-N}$ and on $\A_1\cap\A_3(\tau_i)$ we have $\delta\overline{X}_k(\tau_i)\leqslant\delta\overline{X}_k(t_\delta)\leqslant N^{-8}r^{k/2-N}$. Therefore on the event $\A_3(\tau_i)\cap {\A}_4(\tau_i) \cap {\A}_1$ we further obtain
$$ \P(\overline{\A}_3(\tau_{i+1})|\F_{\tau_i}) \le \sum_{k=1}^{N-1} Cr^{k/2}\frac{N^{-8}r^{k/2-N}}{C'N^{-4}\lambda r^{k-N}} \leqslant \frac{C}{C'}N^{-3}\;.$$
Putting everything together we have shown that
\begin{align*}
\P( \overline{\A}_3(\tau_{i+1})\cap \A_3(\tau_i)) &= \E\big[\P(\overline{\A}_3(\tau_{i+1}) | \F_{\tau_i}) \1_{\A_3(\tau_i)}\big] \\
&\leqslant \E\bigg[\Big(\1_{\overline{\A}_4(\tau_i) \cup \overline{\A}_1} + \1_{{\A}_4(\tau_i) \cap {\A}_1}\P(\overline{\A}_3(\tau_{i+1})|\F_{\tau_i}) \Big) \1_{\A_3(\tau_i)}\bigg] \\
&\leqslant \E\bigg[ \Big(\1_{\overline{\A}_4(\tau_i) \cup \overline{\A}_1} + \1_{{\A}_4(\tau_i) \cap {\A}_1}\frac{C}{C'}N^{-3}\Big) \1_{\A_3(\tau_i)}\bigg]\\
&\le \P(\overline{\A}_4(\tau_i) \cup \overline{\A}_1) + \frac{C}{C'}N^{-3}\;.
\end{align*}
From our previous estimates on the probabilities of $\A_1$ and $\A_4$, we deduce the existence of $C>0$ such that
$$ \P( \overline{\A}_3(\tau_{i+1})\cap \A_3(\tau_i)) \le C N^{-3}\;.$$
To conclude, we note that
$$ \P(\overline{\A}_3(\tau_{i+1})) = \P({\A}_3(\tau_{i})\cap\overline{\A}_3(\tau_{i+1})) + \P(\overline{\A}_3(\tau_{i}))\;,$$
and a simple recursion yields for every $i\ge 0$
$$\P(\overline{\A}_3(\tau_{i})) \le C i N^{-3}\;.$$

Let $i_{\max}$ be the total number of updates that occur in the time interval $(t_\delta,t_\delta+t']$. With the above estimate and since $i_{\max}$ is independent from $(\overline{\A}_3(\tau_{i}))_{i\ge 0}$, we deduce that
$$ \P(\overline{\A_3}(t_\delta+t')) \le C N^{-1} \P(i_{\max} \le N^2) + \P(i_{\max} > N^2)\;.$$
Since $i_{\max}$ is a Poisson r.v.~of parameter $10 (N-1) \log N$, the r.h.s.~goes to $0$ as $N\to \infty$, thus proving the claim.
We conclude that for all $k$, we have $X_k^{\max}(t_\delta+t')=X_k^\pi(t_\delta+t')$ with high probability so that \eqref{Eq:CV1} follows.

\subsection{Proof of \eqref{Eq:CV2}}

We now couple $X^x$ with $(X^{\max}, X^{\pi})$ in such a way that $(X^{\max},X^x)$ follows the monotone coupling of Subsection \ref{coupling}. This may appear as a non-trivial extension of the coupling of Subsection \ref{coupling} to a triplet of interfaces, however we do not require any ordering between $X^x$ and $X^{\pi}$ and this makes the definition of this coupling rather straightfoward. Let us explain briefly how we proceed. The jump times of the three interfaces are given by the same Poisson clocks. Assume that the $k$-th Poisson clock rings, say at time $t$. Let $I^{\max}_k(t)$, resp.~$I^{\pi}_k(t)$, $I^{x}_k(t)$ be the interval formed by the two neighbours of $X^{\max}_k(t-)$, resp.~$X^{\pi}_k(t-)$, $X^{x}_k(t-)$. Recall that the density of $X^{\max}_k(t)$ is given by $\rho_k(I^{\max}_k(t))$, and set
\begin{align*}
	g^{\pi} &:= \frac{(\rho_k(I^{\pi}_k(t))-\rho_k(I^{\max}_k(t))_+}{\|\rho_k(I^{\pi}_k(t))-\rho_k(I^{\max}_k(t))\|_{TV}}\\
    g^{x} &:= \frac{(\rho_k(I^{x}_k(t))-\rho_k(I^{\max}_k(t))_+}{\|\rho_k(I^{x}_k(t))-\rho_k(I^{\max}_k(t))\|_{TV}}\;.
\end{align*}
Draw a uniform r.v.~$U$ over the bounded region of $\R^2$ that lies between the $x$-axis and the curve $\rho_k(I^{\max}_k(t))$. Independently, draw two r.v.~$Y^\pi$ and $Y^x$ according to the densities $g^{\pi}$ and $g^x$. Now set $X^{\max}_k(t)$ to the value obtained by taking the $x$-coordinate of $U$. To define $X^{\pi}_k(t)$ and $X^x_k(t)$ we argue according to the value of $U$.\\
If $U$ falls in the region formed by the $x$-axis and the curve $\rho_k(I^{\pi}_k(t))$, then we set $X^{\pi}_k(t) = X^{\max}_k(t)$. Otherwise, we set $X^{\pi}_k(t) = Y^\pi$. We proceed similarly for $X^x_k(t)$.\\
It is straightforward to check that this produces the desired coupling. We can now proceed with the proof of \eqref{Eq:CV2}. We take as an input \eqref{Eq:CV1}, and we follow the same strategy as in the previous steps.\\

We set $\A_5=\{\forall k, X_k^{\max}(t_\delta+t')=X_k^\pi(t_\delta+t')\}$, we have proven that $\P(\A_5)\too 1$. From now on, we work with the area between $X^{\max}$ and $X^x$:
$$A_t=\sum_{k=1}^{N-1}r^{-k/2}\sin\left(\frac{k\pi}{N}\right)(X^{\max}_k(t)-X^x_k(t)) = f_N(X^{\max}(t)) - f_N(X^x(t))\;.$$
We set
\begin{align*}
\A_1'&=\left\{\forall k, X^{\max}_k(t_\delta+t')-X^x_k(t_\delta+t')\leqslant N^{-8}r^{+k/2-N}\right\}\\
\A_2'&=\left\{\forall k, \text{the $k^{\text{th}}$ coord.~is updated at least once in }[t_\delta+t',t_\delta+2t']\right\}
\end{align*}
and for $t\in[0,t']$
\begin{align*}
\A_3'(t_\delta+t'+t)&=\left\{\text{Every update that occurs in $(t_\delta+t',t_\delta+t'+t]$ is successful}\right\}\;,
\end{align*}
On the event $\A_5\cap\A_1'\cap\A_2'\cap\A_3'(t_\delta+2t')$ we have $$X_k^x(t_\delta+2t')=X_k^{\max}(t_\delta+2t')$$ for all $k$. So it suffices to prove $\P(\A_i')\too 1$ when $N$ goes to infinity.\\
Regarding $\A_1'$ and $\A_2'$, this is exactly the same proof as before.\\
For $\A_3'(t_\delta+2t')$, it suffices to prove that $\P(\A_3'(t_\delta+2t')|\A_5)\too 1$ as $N$ goes to infinity. Similarly as before, we define the sequence of update times $(\tau_i')_i$, we introduce $\A_4'(t)=\{\forall k,\nabla X_k^{\max}(t)\geqslant C'N^{-4}\lambda r^{k-N}\}$. On the event $\A_5$, the interfaces $X^{\max}$ and $X^{\pi}$ are equal at all times $t\ge t_\delta+t'$, therefore the estimates on the invariant measure yield $\P(\overline{\A}_4'(t)|\A_5)\leqslant C''N^{-4}$. The previous proof then applies verbatim. This completes the proof of \eqref{Eq:CV2} (note that all the estimates are uniform over $x$).

%

\section{The Hamilton-Jacobi equation}\label{Sec:HJ}

We consider the Hamilton-Jacobi equation
\begin{equation}\label{Eq:HJ}
	\partial_t f+\partial_x f+(\partial_x f)^2=0\;,\quad (x,t)\in \Omega := (0,1)\times(0,+\infty)\;,
\end{equation}
with boundary condition prescribed on $\partial\Omega := \big((0,1]\times\{0\} \big)\cup \big(\{0\}\times (0,+\infty)\big) \cup \big(\{1\} \times (0,+\infty)\big)$ by
\begin{equation}\label{Eq:BC}
	\text{BC}(x,t)=\left\{\begin{array}{ll}
		0&\text{on }(0,1]\times\{0\}\;,\\
		1&\text{on }\{0\} \times (0,+\infty)\;,\\
		0&\text{on }\{1\} \times (0,+\infty)\;.
	\end{array}\right.
\end{equation}
Note that the point $(0,0)$ is excluded from the boundary, see Remark \ref{Rk:BC} for more details.

In this section, we prove existence and uniqueness of the solution of this equation. To that end, we need to recall some material from the theory of viscosity solutions introduced by Crandall and Lions~\cite{CrandallLions}.

\begin{defn}
We say that a function $u$ satisfies the viscosity inequality for the super-solution (respectively sub-solution) at a point $(x,t)$ if for any function $\phi$ that is $\mathcal{C}^\infty$ on a neighborhood of $(x,t)$ and such that $u-\phi$ admits a local minimum (respectively maximum) at $(x,t)$, we have the inequality
$$\partial_t \phi+\partial_x \phi+(\partial_x \phi)^2\geqslant 0\text{ (respectively }\leqslant)\;.$$
\end{defn}

The rigorous definition of being a solution to our equation relies on the notion of sub-solution and super-solution of viscosity. These definitions for non-continuous boundary conditions come from \cite{CIL}.
\begin{defn}[Viscosity solution]\label{definition viscosite}
Let $u$ be a lower semi-continuous function (respectively upper semi-continuous function) on $[0,1]\times[0,+\infty)$. We say that $u$ is a viscosity super-solution (respectively sub-solution) of \eqref{Eq:HJ} if:
\begin{enumerate}
\item for every $(x,t) \in \Omega$ it satisfies the viscosity inequality for the super-solution (resp.~sub-solution),
\item for every $(x,t) \in \partial \Omega$ it satisfies the viscosity inequality for the super-solution (resp.~sub-solution) or the boundary inequality $u(x,t)\geqslant$ BC$(x,t)$ (resp.~$u(x,t)\leqslant$ BC$(x,t)$).
\end{enumerate}
We say that $u$ is a viscosity solution if it is both a viscosity sub-solution and super-solution.
\end{defn}

Our next result shows uniqueness of viscosity solution under some relatively mild assumptions.
\begin{prop}\label{Th:Uniq}
Let $u$ be a viscosity sub-solution and $v$ a viscosity super-solution of \eqref{Eq:HJ}. Assume that
\begin{enumerate}
\item for all $(x,t) \in [0,1]\times[0,+\infty)$ we have
$$0\leqslant u(x,t)\leqslant 1-x\;,\quad \text{ and }\;0\leqslant v(x,t)\leqslant 1-x\;,$$
\item for all $t\ge 0$ the function $x\longmapsto u(x,t)$ is continuous at 0,
\item the function $t\longmapsto v(0,t)$ is continuous. 
\end{enumerate}
Then $u\leqslant v$ on $[0,1]\times[0,+\infty)$.
\end{prop}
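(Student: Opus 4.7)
The natural strategy is the Crandall--Ishii--Lions doubling-of-variables method, carefully adapted to the mixed boundary condition~\eqref{Eq:BC}. Fix $T>0$; it suffices to show $u\leqslant v$ on $[0,1]\times[0,T]$ and let $T\to\infty$. Arguing by contradiction, suppose $M := \sup_{[0,1]\times[0,T]}(u-v) > 0$. For $\eps,\eta>0$ small, introduce
\[
\Phi_\eps(x,t,y,s) := u(x,t) - v(y,s) - \frac{(x-y)^2 + (t-s)^2}{2\eps} - \frac{\eta}{T-t} - \frac{\eta}{T-s}.
\]
The terms $\eta/(T-t)+\eta/(T-s)$ force the supremum (attained, since $u-v$ is upper semi-continuous on the compact $[0,1]\times[0,T]$) to be reached at some point $(\bar x,\bar t,\bar y,\bar s)$ with $\bar t,\bar s<T$. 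For $\eta$ small enough, $\Phi_\eps(\bar x,\bar t,\bar y,\bar s) \geqslant M/2$, and the classical estimates yield $(|\bar x-\bar y|^2 + |\bar t-\bar s|^2)/\eps \to 0$ as $\eps\to 0$, while (up to extraction) $(\bar x,\bar t),(\bar y,\bar s)\to(x_0,t_0)$ with $(u-v)(x_0,t_0)\geqslant M/2$.

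Next I would rule out boundary configurations by a case analysis. If $\bar x=1$, the bound $u\leqslant 1-x$ combined with $u\geqslant 0$ forces $u(\bar x,\bar t) = 0$, so $\Phi_\eps\leqslant 0$, a contradiction. The case $\bar y=1$ is handled similarly: $v(1,\cdot)=0$ and the quadratic penalty on $(\bar x-1)^2$ gives $\Phi_\eps \leqslant \eps/2$. If $\bar t=0$ (with $\bar x>0$), the sub-solution axiom gives either $u(\bar x,0)\leqslant 0$, which together with $v\geqslant 0$ contradicts $\Phi_\eps>0$, or the PDE viscosity inequality, which is subsumed by the interior analysis below (the test function extends smoothly across $\{t=0\}$). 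The delicate point is $\bar x=0$: hypothesis~(2) says $u(\cdot,\bar t)$ is continuous at $0$, and the super-solution alternative for $v$ at $(\bar y,\bar s)$ is either the PDE inequality (treated below) or $v(\bar y,\bar s)\geqslant 1$; in the latter case, $u(\bar x,\bar t)\leqslant 1-\bar x\leqslant 1$ gives $u-v\leqslant 0$ plus an $o(1)$ error as $\bar y\to 0$, contradicting $\Phi_\eps\geqslant M/2$. The case $\bar y=0$ is symmetric, using the continuity~(3) of $t\mapsto v(0,t)$.

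It remains to treat the configuration in which both PDE viscosity inequalities hold. Testing $u$ at $(\bar x,\bar t)$ with $\phi(x,t) := v(\bar y,\bar s) + \frac{(x-\bar y)^2 + (t-\bar s)^2}{2\eps} + \frac{\eta}{T-t} + \frac{\eta}{T-\bar s}$ and $v$ at $(\bar y,\bar s)$ with the analogous $\psi$ yields, writing $p := (\bar x-\bar y)/\eps$,
\[
\frac{\bar t-\bar s}{\eps} + \frac{\eta}{(T-\bar t)^2} + p + p^2 \leqslant 0 \leqslant \frac{\bar t-\bar s}{\eps} - \frac{\eta}{(T-\bar s)^2} + p + p^2.
\]
The crucial feature is that both inequalities involve the \emph{same} spatial slope $p$, so the Hamiltonian contribution $p+p^2$ cancels on subtraction, leaving $\frac{\eta}{(T-\bar t)^2} + \frac{\eta}{(T-\bar s)^2} \leqslant 0$, impossible for $\eta>0$. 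This contradicts the assumption $M>0$; sending $\eta\to 0$ and $T\to\infty$ finishes the argument.

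The main obstacle is the boundary analysis: because the viscosity definition requires only that \emph{each} boundary inequality \emph{or} the PDE inequality hold at every boundary point, one cannot directly compare $u$ and $v$ pointwise on $\partial\Omega$. The quantitative bounds $0\leqslant u,v\leqslant 1-x$ (which implicitly encode the Dirichlet data at $x=1$) together with the continuity hypotheses~(2) and~(3) are precisely what is needed to prevent a maximizing sequence from escaping to the boundary; the exclusion of the corner $(0,0)$ from $\partial\Omega$ is consistent with the discontinuity of BC there.
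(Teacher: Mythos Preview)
Your doubling-of-variables strategy is the right one, and your handling of the interior case and of the boundaries $x=1$, $y=1$ is fine. But your treatment of the lateral boundary $\bar x=0$ contains a genuine gap.

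At $\bar x=0$ the \emph{sub-solution} alternative for $u$ reads: either the PDE inequality holds, or $u(0,\bar t)\leqslant \mathrm{BC}(0,\bar t)=1$. Since the hypothesis already gives $u\leqslant 1-x\leqslant 1$, the boundary inequality is \emph{always} satisfied, and you therefore cannot force the viscosity inequality for $u$ at $(0,\bar t)$. Your paragraph on this case discusses instead the super-solution alternative for $v$ at $(\bar y,\bar s)$, which is a different issue (and which, incidentally, you handle correctly when $\bar y=0$: there $v(0,\bar s)\geqslant 1$ would contradict $\Phi_\eps>0$). The continuity assumption (2) on $x\mapsto u(x,t)$ does not by itself resolve this; with the standard symmetric penalty $(x-y)^2/(2\eps)$ nothing prevents $\bar x=0$ from occurring for every small $\eps$.

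The paper's fix is to replace, in the case where the undoubled maximum sits at $x_0=0$, the symmetric penalty by the \emph{shifted} one $\bigl(\tfrac{x-y-\eps}{\eps}\bigr)^2$. This forces $x_\eps=y_\eps+\eps+o(\eps)>0$, so the sub-solution PDE inequality applies to $u$ at $(x_\eps,t_\eps)$; hypothesis (2) is then used to show that the shifted comparison point $u(x_0+\eps,t_0)$ converges to $u(x_0,t_0)$, so that the doubled maxima still accumulate at an argmax of $u-v$. Hypothesis (3) enters symmetrically, to ensure that when $y_\eps=0$ one has $v(0,s_\eps)<1$ for $\eps$ small (since $v(0,t_0)<1$ at a point where $u-v>0$), whence the boundary alternative for $v$ fails and the PDE inequality must hold. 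A secondary point: the paper first invokes Lemma~\ref{Lemma:u} to get $u(\cdot,0)=0$, which cleanly excludes $t_0=0$; your direct handling of $\bar t=0$ is plausible but you should also address $\bar s=0$, where the super-solution boundary alternative $v(\bar y,0)\geqslant 0$ is again vacuous.
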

\begin{rem}\label{Rk:BC}
We have not prescribed any boundary condition at $(0,0)$. Indeed uniqueness holds in this setting, and this is essentially due to the following two facts: (1) for any viscosity sub-solution $u$ that satisfies $0\leqslant u(x,t)\leqslant 1-x$ we have $u(0,0) = 0$, see Lemma \ref{Lemma:u}, and (2) we impose a priori $v(0,0) \ge 0$ to our viscosity super-solutions.\\
Note that a natural boundary condition at $(0,0)$ would be given by taking the upper and lower semi-continuous extensions of BC, thus yielding two different values for the sub-solutions and super-solutions.
\end{rem}

Before we proceed to the proof of the proposition, we first collect an a priori estimate.

\begin{lem}\label{Lemma:u}
Let $u$ be a viscosity sub-solution such that $0 \le u(x,t) \le 1-x$ for all $(x,t) \in [0,1]\times[0,4)$. Then $u(x,t) \le t/4$ for all $x\in (0,1]$ and all $t\in [0,4)$, and in particular $u(x,0) = 0$ for all $x\in (0,1]$.
\end{lem}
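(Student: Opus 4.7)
The plan is to argue by contradiction via a penalization argument. Assume $u(x_0,t_0) > t_0/4$ for some $(x_0,t_0)\in(0,1]\times[0,4)$, set $\delta := u(x_0,t_0) - t_0/4 > 0$, fix $T\in(t_0,4)$, and, for small $\epsilon,\mu>0$, consider
$$\Phi(x,t) := u(x,t) - \frac{t}{4} - \frac{\epsilon}{x} - \frac{\mu}{T-t}$$
on $(0,1]\times[0,T)$. The terms $\epsilon/x$ and $\mu/(T-t)$ blow up at $\{x=0\}$ and $\{t=T\}$ respectively, so the upper semi-continuous function $\Phi$ attains its maximum $M$ at some $(x^*,t^*)\in(0,1]\times[0,T)$; by choosing $\epsilon,\mu$ small enough we ensure $M\ge \Phi(x_0,t_0)\ge\delta/2>0$.

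\textbf{Analysis of the maximiser.} The configuration $x^*=1$ is immediately ruled out, since $u(1,t^*)\le 1-1 = 0$ gives $\Phi(1,t^*)\le -\epsilon<0\le M$. If $t^*=0$, then by item (2) of Definition \ref{definition viscosite}, either the relaxed boundary condition $u(x^*,0)\le\mathrm{BC}(x^*,0)=0$ holds --- which would give $\Phi(x^*,0)\le -\epsilon/x^*<0$, contradicting $M>0$ --- or the viscosity sub-solution inequality is satisfied at $(x^*,0)$. Thus, in all remaining cases (either $(x^*,t^*)$ is interior, or $t^*=0$ with the sub-solution inequality in force), we may test against the smooth function
$$\phi(x,t) := \frac{t}{4} + \frac{\epsilon}{x} + \frac{\mu}{T-t} + M.$$
By construction $(u-\phi)(x,t)=\Phi(x,t)-M\le 0$ on $(0,1]\times[0,T)$ with equality at $(x^*,t^*)$, so $u-\phi$ attains a local maximum at $(x^*,t^*)$. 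A direct computation yields
$$\Big(\partial_t\phi+\partial_x\phi+(\partial_x\phi)^2\Big)\Big|_{(x^*,t^*)} = \frac{1}{4}+\frac{\mu}{(T-t^*)^2}-\frac{\epsilon}{(x^*)^2}+\frac{\epsilon^2}{(x^*)^4} = \frac{\mu}{(T-t^*)^2}+\Big(\frac{\epsilon}{(x^*)^2}-\frac{1}{2}\Big)^2 > 0,$$
in direct contradiction with the sub-solution inequality $\le 0$.

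\textbf{Conclusion.} Therefore $M\le 0$, which rearranges as $u(x,t)\le t/4 + \epsilon/x + \mu/(T-t)$ on $(0,1]\times[0,T)$. For any fixed $(x,t)\in(0,1]\times[0,4)$, we choose $T\in(t,4)$ and send $\epsilon,\mu\to 0^+$ to recover $u(x,t)\le t/4$; taking $t=0$ and using $u\ge 0$ then gives $u(x,0)=0$ for every $x\in(0,1]$. The only delicate point is legitimising the viscosity inequality when the penalized maximiser sits at $t^*=0$, which is precisely where the relaxed boundary clause of Definition \ref{definition viscosite} is invoked; the penalty $\mu/(T-t)$ is essential because, without its strictly positive contribution $\mu/(T-t^*)^2$, the PDE computation above would only yield the identity $(\epsilon/(x^*)^2-1/2)^2\ge 0$, which is too weak to produce a contradiction.
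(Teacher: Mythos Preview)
Your proof is correct and follows the same penalization approach as the paper, with the same test-function structure $t/4 + \epsilon/x + \mu/(T-t)$ and the same completing-the-square computation $(\epsilon/(x^*)^2 - 1/2)^2 + \mu/(T-t^*)^2 > 0$. The only cosmetic differences are that you handle the $t^*=0$ boundary case more directly (the paper first establishes $u(\cdot,0)=0$ via a separate small-parameter argument before concluding), and that your outer contradiction hypothesis $u(x_0,t_0)>t_0/4$ is in fact superfluous, since your analysis of the maximiser already shows $M\le 0$ for every choice of $\epsilon,\mu>0$.
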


This result remains true if one replaces $[0,4)$ by $[0,T)$ for some arbitrary $T>0$. However, since $u$ is bounded from above by $1$, this bound is pointless for $T>4$.

\begin{proof}
Since $0\leqslant u(x,t)\leqslant 1-x$, necessarily we have $u(1,t)=0$. It remains to cover the remaining values of $x$.

We will use the function $\phi(x,t)=\frac{t}{4}+\frac{\sigma}{4-t}+\frac{K}{x}$, for some parameters $K>0$ and $\sigma>0$. that will be adjusted in the proof. We set $(x_0,t_0)\in\text{argmax}\{u-\phi\}$. Note that the latter exists and lies in $(0,1]\times [0,4)$. Since $\phi$ is $\mathcal{C}^\infty$ in $(0,1]\times [0,4)$, it is locally $\mathcal{C}^\infty$ around $(x_0,t_0)$. Simple computations show that, in $]0,1]\times [0,4[$, we have the inequalities $\partial_t \phi  > 1/4$ and $\partial_x\phi+(\partial_x\phi)^2 \geqslant -1/4$ so that
\begin{equation}\label{Eq:phiContrad}
\partial_t \phi+\partial_x\phi+(\partial_x\phi)^2>0\;.
\end{equation}
We now distinguish two cases.

\medskip

\textbf{First case: $t_0>0$}. Recall that $u(1,t) = 0$. Since $t\mapsto \phi(x,t)$ is increasing, we have $u(1,0)-\phi(1,0)>u(1,t_0)-\phi(1,t_0)$ and therefore we cannot have $x_0=1$. Consequently $(x_0,t_0)\in(0,1)\times(0,4)$ and $u$ must satisfy the viscosity inequality for the sub-solution at $(x_0,t_0)$, thus raising a contradiction with \eqref{Eq:phiContrad}.

\medskip

\textbf{Second case: $t_0=0$}. First assume that there exists $x$ such that $u(x,0)>0$. We claim that, provided $K$ is small enough, we have $x_0\in\{y>0, u(y,0)>0\}$. Indeed, take $y \in (0,1]$ such that $u(y,0)=0$. If $y < x$ then we have $u(x,0)-\frac{K}{x}>u(y,0)-\frac{K}{y}$ so that $x_0$ cannot lie in $\{ y \in (0,x): u(y,0) = 0\}$. If $y>x$ then, provided $K < \frac{u(x,0)}{\frac{1}{x}-1}$, we have $u(x,0)-\frac{K}{x}>u(y,0)-\frac{K}{y}$ thus concluding the proof of the claim. Consequently $u(x_0,0)>0$, so that $u$ does not satisfy the boundary inequality at $(x_0,0)$ and must satisfy the viscosity inequality for the sub-solution, thus raising a contradiction with \eqref{Eq:phiContrad}.
We have therefore proved that $u(x,0) = 0$ for all $x\in (0,1]$, and in particular, $u(x_0,0) = 0$. This means that $u(x,t) \le \phi(x,t)$ on $(0,1]\times[0,4)$. Taking $K,\sigma \too 0$ we get
$$u(x,t)\leqslant \frac{t}{4}$$ as required.
\end{proof}

We now proceed to the proof of our uniqueness result. This is an adaptation of the proof of \cite[Th 7.5]{CIL} to a setting where less regularity is assumed on the boundary condition and the super and sub-solutions.

\begin{proof}[Proof of Proposition \ref{Th:Uniq}]
Fix two parameters $\sigma>0$ and $T>0$, and suppose by contradiction that $M :=\max_{(x,t)\in[0,1]\times[0,T)}\left\{u(x,t)-v(x,t)-\frac{\sigma}{T-t}\right\}$ satisfies $M>0$. Let $(x_0,t_0)$ be an argument of this maximum. By the previous lemma, we know that $u(x,0)= 0$, and by hypothesis, we have $u(1,t) = 0$ and $v(x,t)\geqslant 0$. We thus deduce that $x_0\not=1$ and $t_0\not=0$. Necessarily either $(x_0,t_0)\in(0,1)\times(0,T)$ or $x_0=0$. We now distinguish two cases, that will both raise a contradiction.

\medskip

\textbf{First case:} Assume that all argmax $(x_0,t_0)$ of $M$ lie in $(0,1)\times(0,T)$. We set
$$M_\eps := \max_{(x,t), (y,s)\in[0,1]\times[0,T)}\left\{u(x,t)-v(y,s)-\frac{\sigma}{T-t}-\frac{(x-y)^2}{2\eps}-\frac{(t-s)^2}{2\eps}\right\}\;,$$
and we let $(x_\eps,t_\eps,y_\eps,s_\eps)$ be an argument of this maximum (argmax for brevity).

We claim that there is an argmax $(x_0,t_0)$ of $M$ for which $(x_{\eps_n},t_{\eps_n})\too (x_0,t_0)$ and $(y_{\eps_n},s_{\eps_n})\too (x_0,t_0)$ along some sequence $\eps_n \too 0$.

To prove the claim, we argue as follows. As a function of $\eps$, $M_\eps$ is non decreasing and $M_\eps\geqslant M$, so $M_\eps$ admits a limit, say $\alpha$, when $\eps$ goes to $0$. Moreover, we have
\begin{align*}
M_{2\eps} &\geqslant u(x_\eps,t_\eps)-v(y_\eps,s_\eps)-\frac{\sigma}{T-t_\eps}-\frac{(x_\eps-y_\eps)^2}{4\eps}-\frac{(t_\eps-s_\eps)^2}{4\eps}\\
&=M_\eps+\frac{(x_\eps-y_\eps)^2}{4\eps}+\frac{(t_\eps-s_\eps)^2}{4\eps}\;,
\end{align*}
so that $\frac{(x_\eps-y_\eps)^2}{4\eps}\too 0$ and $\frac{(t_\eps-s_\eps)^2}{4\eps}\too 0$ as $\eps\downarrow 0$. Since $(x_\eps,t_\eps)_\eps$ lies in a compact set, we can extract a sequence $(\eps_n)_n$ such that $(x_{\eps_n},t_{\eps_n})_n$ converges to some limit $(x',t')$. We have $(y_{\eps_n},s_{\eps_n})\too (x',t')$ and 
$$M\le \lim_n M_{\eps_n} = \alpha\leqslant u(x',t')-v(x',t')-\frac{\sigma}{T-t'}\leqslant M$$ 
because $u-v$ is u.s.c. We deduce that $(x',t')$ is an argmax of $M$, thus concluding the proof of the claim.

\medskip

Given the claim, for $n$ large enough $(x_{\eps_n},t_{\eps_n})$ and $(y_{\eps_n},s_{\eps_n})$ lie in $(0,1)\times(0,T)$. We set
$$\phi_u(x,t)=v(y_{\eps_n},s_{\eps_n})+\frac{\sigma}{T-t}+\frac{(x-y_{\eps_n})^2}{2\eps_n}+\frac{(t-s_{\eps_n})^2}{2\eps_n}.$$
By definition of $M_{\eps_n}$, $u-\phi_u$ admits a maximum at $(x_{\eps_n},t_{\eps_n})$. Since $u$ is a sub-solution we get
\begin{equation}\label{viscosite 1}
\partial_t\phi_u(x_{\eps_n},t_{\eps_n})+\partial_x\phi_u(x_{\eps_n},t_{\eps_n})+(\partial_x\phi_u(x_{\eps_n},t_{\eps_n}))^2\leqslant 0.
\end{equation}
In the same way, we set 
$$\phi_v(y,s)=u(x_{\eps_n},t_{\eps_n})-\frac{\sigma}{T-t_{\eps_n}}-\frac{(x_{\eps_n}-y)^2}{2\eps_n}-\frac{(t_{\eps_n}-s)^2}{2\eps_n},$$
the function $v-\phi_v$ admits a minimum at $(y_{\eps_n},s_{\eps_n})$ so that
\begin{equation}\label{viscosite 2}
\partial_t\phi_v(y_{\eps_n},s_{\eps_n})+\partial_x\phi_v(y_{\eps_n},s_{\eps_n})+(\partial_x\phi_v(y_{\eps_n},s_{\eps_n}))^2\geqslant 0.
\end{equation}
A computation shows that
\begin{align*}
\partial_t\phi_u(x_{\eps_n},t_{\eps_n})&=\frac{t_{\eps_n}-s_{\eps_n}}{\eps_n}+\frac{\sigma}{(T-t_{\eps_n})^2}\;,\quad &&\partial_t\phi_v(y_{\eps_n},s_{\eps_n})=\frac{t_{\eps_n}-s_{\eps_n}}{\eps_n}\\
\partial_x\phi_u(x_{\eps_n},t_{\eps_n})&=\frac{x_{\eps_n}-y_{\eps_n}}{\eps_n}\;,\quad &&\partial_x\phi_v(x_{\eps_n},t_{\eps_n})=\frac{x_{\eps_n}-y_{\eps_n}}{\eps_n}\;,
\end{align*}
which, plugged into (\ref{viscosite 1}) minus (\ref{viscosite 2}), yield the following contradiction
$$\frac{\sigma}{(T-t_{\eps_n})^2}\leqslant 0\;.$$

\medskip

\textbf{Second case:} Assume that there exists an argmax $(x_0,t_0)$ of $M$ such that $x_0=0$. We set
$$M_\eps := \max_{(x,t), (y,s)\in[0,1]\times[0,T)}\left\{u(x,t)-v(y,s)-\frac{\sigma}{T-t}-\left(\frac{x-y-\eps}{\eps}\right)^2-\frac{(t-s)^2}{2\eps}\right\}$$
and we let $(x_\eps,t_\eps,y_\eps,s_\eps)$ be an argument of this maximum.

\smallskip

We claim that there exists an argmax $(x',t')$ of $M$ for which $(x_{\eps_n},t_{\eps_n})\too (x',t')$, $(y_{\eps_n},s_{\eps_n})\too (x',t')$ and $x_{\eps_n}=y_{\eps_n}+\eps_n+o(\eps_n)$ along some sequence $\eps_n \too 0$.

\smallskip

To prove the claim, we argue as follows. Necessarily
\begin{align*}
u(x_\eps,t_\eps)-v(y_\eps,s_\eps)&-\frac{\sigma}{T-t_\eps}-\left(\frac{x_\eps-y_\eps-\eps}{\eps}\right)^2-\frac{(t_\eps-s_\eps)^2}{2\eps}\\
&\geqslant u(x_0+\eps,t_0)-v(x_0,t_0)-\frac{\sigma}{T-t_0}
\end{align*}
so that
\begin{align*}
\left(\frac{x_\eps-y_\eps-\eps}{\eps}\right)^2+\frac{(t_\eps-s_\eps)^2}{2\eps}\leqslant& u(x_\eps,t_\eps)-v(y_\eps,s_\eps)-(u(x_0+\eps,t_0)-v(x_0,t_0))\\
&-\frac{\sigma}{T-t_\eps}+\frac{\sigma}{T-t_0}.
\end{align*}
Since the right hand side is bounded from above uniformly over $\eps$, we deduce that $x_\eps-y_\eps-\eps\too 0$ and $t_\eps-s_\eps\too 0$ when $\eps$ goes to $0$. By compactness, there is a sequence $(\eps_n)_n$ such that $(x_{\eps_n},t_{\eps_n})$ converges and we denote by $(x',t')$ its limit. Necessarily $(y_{\eps_n},s_{\eps_n})\too(x',t')$. As $u-v$ is u.s.c. and $x\mapsto u(x,t_0)$ is continuous at $x_0=0$, we find
\begin{align*}
&\limsup_{\eps_n\to 0}\left(\frac{x_{\eps_n}-y_{\eps_n}-\eps_n}{\eps_n}\right)^2+\frac{(t_{\eps_n}-s_{\eps_n})^2}{2\eps_n}\\
&\leqslant u(x',t')-v(x',t')-(u(x_0,t_0)-v(x_0,t_0))-\frac{\sigma}{T-t'}+\frac{\sigma}{T-t_0} \leqslant 0.
\end{align*}
We deduce that $(x',t')$ is an argmax of $M$ and moreover $$\frac{x_{\eps_n}-y_{\eps_n}-\eps_n}{\eps_n}\too 0$$ so $x_{\eps_n}=y_{\eps_n}+\eps_n+o(\eps_n)$.

\bigskip

As a consequence of the claim and since $x' \ne 1$, for $n$ large enough, we have $(x_{\eps_n},t_{\eps_n})\in (0,1)\times(0,T)$ and $(y_{\eps_n},s_{\eps_n})\in [0,1)\times(0,T)$. Since $(x_{\eps_n},t_{\eps_n})$ does not lie on the boundary, $u$ satisfies the viscosity inequality for the sub-solution at $(x_{\eps_n},t_{\eps_n})$. If $x' \ne 0$, then for all $n$ large enough $(y_{\eps_n},s_{\eps_n})$ does not lie on the boundary, and $v$ satisfies the viscosity inequality for the super-solution at that point. On the other hand, if $x' = 0$ then necessarily $v(x',t') < 1$ (otherwise $(x',t')$ could not be an argmax) and by continuity of $t\mapsto v(0,t)$, we deduce that $v(x',t) < 1$ in a neighbourhood of $t'$, and therefore, for $n$ large enough, $v$ satisfies the viscosity inequality for the super-solution at $(y_{\eps_n},s_{\eps_n})$ even if $y_{\eps_n} = 0$.\\
We set
$$\phi_u(x,t)=v(y_{\eps_n},s_{\eps_n})+\frac{\sigma}{T-t}+\left(\frac{x-y_{\eps_n}-\eps_n}{\eps_n}\right)^2+\frac{(t-s_{\eps_n})^2}{2\eps_n}\;,$$
then $u-\phi_u$ admits a maximum at $(x_{\eps_n},t_{\eps_n})$ so that
\begin{equation}\label{viscosite 3}
\partial_t\phi_u(x_{\eps_n},t_{\eps_n})+\partial_x\phi_u(x_{\eps_n},t_{\eps_n})+(\partial_x\phi_u(x_{\eps_n},t_{\eps_n}))^2\leqslant 0.
\end{equation}
In the same way, we set
$$\phi_v(y,s)=u(x_{\eps_n},t_{\eps_n})-\frac{\sigma}{T-t_{\eps_n}}-\left(\frac{x_{\eps_n}-y-\eps_n}{\eps_n}\right)^2-\frac{(t_{\eps_n}-s)^2}{2\eps_n}\;,$$
then $v-\phi_v$ admits a minimum at $(y_{\eps_n},s_{\eps_n})$ and we have
\begin{equation}\label{viscosite 4}
\partial_t\phi_v(y_{\eps_n},s_{\eps_n})+\partial_x\phi_v(y_{\eps_n},s_{\eps_n})+(\partial_x\phi_v(y_{\eps_n},s_{\eps_n}))^2\geqslant 0.
\end{equation}
Simple computations show that
\begin{align*}
\partial_t\phi_u(x_{\eps_n},t_{\eps_n})&=\frac{t_{\eps_n}-s_{\eps_n}}{\eps_n}+\frac{\sigma}{(T-t_{\eps_n})^2}\;,\quad &&\partial_t\phi_v(y_{\eps_n},s_{\eps_n})=\frac{t_{\eps_n}-s_{\eps_n}}{\eps_n}\;,\\
\partial_x\phi_u(x_{\eps_n},t_{\eps_n})&=2\frac{x_{\eps_n}-y_{\eps_n}-\eps_n}{\eps_n^2}\;,\quad &&\partial_x\phi_v(x_{\eps_n},t_{\eps_n})=2\frac{x_{\eps_n}-y_{\eps_n}-\eps_n}{\eps_n^2}\;.
\end{align*}
Thus computing (\ref{viscosite 3}) minus (\ref{viscosite 4}), we get to the contradiction
$$\frac{\sigma}{(T-t_{\eps_n})^2}\leqslant 0\;.$$

We have therefore proven that $M\le 0$. Since this holds for all $\sigma > 0$, we deduce that $u\le v$.
\end{proof}

Now that we have proved uniqueness of viscosity solutions, we exhibit a solution using Lax's formula. To that end, we rewrite our equation in the generic form
$$\partial_t u+H(Du)=0\;,$$
with $H(p)=p+p^2$. We set $L(x,t;y,s) :=(t-s)H^*\left(\frac{x-y}{t-s}\right)$ for $s<t$ where $H^*$ is the convex dual of $H$ ie
$$H^*(r) :=\sup_{p\in\R}\{pr-H(p)\} =\frac{1}{4}(r-1)^2\;.$$
Lax's formula takes the form
$$S(x,t):=\inf_{\partial \Omega}\text{BC}(y,s)+L(x,t;y,s)=\left(\frac{1}{4t}\left((t-x)_+\right)^2\right)\wedge (1-x)\;.$$
Let us now prove that $S$ is both a sub-solution and a super-solution that satisfies the assumptions of Proposition \ref{Th:Uniq}.\\
We have $0\leqslant S\leqslant 1-x$, and at the boundary
$$S(x,0)=0\;,\quad S(1,t)=0\;,\quad S(0,t) = \min(\frac{t}{4},1)\;.$$
The boundary conditions are therefore satisfied, except for the super-solution on $\{0\}\times(0,4)$. However on this set we have $\partial_t S = 1/4$ and $\partial_x S+(\partial_x S)^2\geqslant \frac{-1}{4}$: consequently, $S$ satisfies the viscosity inequality for the super-solution on $\{0\}\times(0,4)$.\\
Let us now prove that $S$ satisfies the viscosity inequalities in $\Omega$. We remark that $S$ is also the solution of Lax's formula associated to the same PDE except that the boundary condition BC$(0,t)=1$ is replaced by BC'$(0,t)=\min(\frac{t}{4},1)$. We can apply~\cite[Theorem 11.1.v]{Lions} and deduce that $S$ is a viscosity solution of the same PDE but with the boundary conditions BC'. This ensures that $S$ satisfies the viscosity inequalities in $(0,1)\times(0,+\infty)$. We conclude  that $S$ is the unique solution of our equation.

\section{The lower bound}\label{Sec:Lower}

The goal of this section is to prove the lower bounds of Theorems \ref{Th:2} and \ref{Th:Main}. The lower bound for the former is a consequence of the first hydrodynamic limit stated as Proposition \ref{Prop:HydroNaive} in the introduction: its proof is given in Subsection \ref{Subsec:Naive}.\\
Most of the section is therefore devoted to the lower bound of Theorem \ref{Th:Main}, that relies on the hydrodynamic limit stated as Theorem \ref{Th:PDE}. As explained in the introduction, this hydrodynamic limit is built on the transformation
$$T(u)=\frac{-1}{N}\log_r\left(\frac{u}{a_N}+r^{-N}\right)\;,\quad u\in [0,N]\;,$$
with $a_N=\frac{N}{1-r^{-N}}$. For a configuration $x \in S_N$, we write $z = T(x)$ as a shortcut for
$$ z_k = T(x_k)\;,\quad \forall k \in \{0,\ldots,N\}\;.$$

Note that the image of the maximal configuration $\max := (0,N,\ldots,N) \in S_N$ through $T$ is given by $(1,0,\ldots,0)$, while the minimal configuration $(0,\ldots,0,N) \in S_N$ is sent on $(1,\ldots,1,0)$. Note also that if $x\le x'$ then $T(x) \ge T(x')$ for the natural partial order on $\R^{N+1}$. Note also that $T(\pi_N(x_k)) \sim 1- \frac{k}{N}$.\\
%
%
%

%

%

Given the hydrodynamic limit of Theorem \ref{Th:PDE}, we can complete the proof of Theorem \ref{Th:Main}

\begin{proof}[Proof of Theorem \ref{Th:Main}]
	We already proved the upper bound in Section \ref{Sec:Upper}. It remains to show that, for any given $t<4$, we have
	$$ \| P^{\max}_{t N / \lambda} - \pi_N \|_{TV} \to 1\;,\quad N\to\infty\;.$$
	Fix $t<4$. Observe that $S(0,t)=\frac{t}{4}<1$. Since $S$ is a continuous function, there exists $y>0$ such that $S(y,t)<1-y$.
	We can thus fix $\eps>0$ such that $S(y,t)+\eps<1-y-\eps$. Set $k_N=\lfloor yN\rfloor$, and introduce $\A_N=\{x\in S_N: T(x_{k_N})\leqslant 1-y-\eps\}$. By Theorem \ref{Th:PDE}, we have $\P(X^{\max}(tN/\lambda)\in \A_N)\too 1$ when $N$ goes to infinity. To conclude, it suffices to show that $\pi_N(\A_N)\too 0$ when $N$ goes to infinity.\\
	Recall from Lemma \ref{Lem:InvMeas} that under $\pi_N$, the r.v.~$x_{k_N}/N$ is distributed according to $\beta(u,v)$ with $u=\sum_{i=1}^{k_N} \alpha_i$ and $v=\sum_{i=k_N+1}^N \alpha_i$. Recall also that $\alpha_i = \alpha_0 r^i$ and that $r = \frac{1+\lambda}{1-\lambda}$. As $N\to\infty$ we compute
	\begin{align*}
		\pi_N(x_{k_N})&=N\frac{u}{u+v}
		=N \frac{r^{k_N+1}-r}{r^{N+1}-r}
		\sim Nr^{k_N-N}\\
		\Var_{\pi_N}(x_{k_N})&=N^2\frac{uv}{\left(u+v\right)^2\left(u+v+1\right)}=O\left(N^2 r^{k_N-2N}\right).
	\end{align*}
	We deduce that under $\pi_N$ the r.v.~$x_{k_N}$ is smaller than $N r^{k_N-N+N\eps/2}$ with high probability so we have $\pi_N(\A_N)\too 0$ when $N$ goes to infinity.
\end{proof}

The proof of Theorem \ref{Th:PDE} is decomposed into three subsections :
\begin{enumerate}
	\item In Subection \ref{Subsec:M}, we introduce a Markov process $M$, which is a variant of our dynamics that satisfies $M\le X^{\max}$ with large probability. Then, we show that, provided $T(\E[X^{\max}])$ and $\E[T(M)]$ go to the solution of the Hamilton-Jacobi equation \eqref{Eq:HJ}, the hydrodynamic limit holds.
	\item In Subsection \ref{Subsec:ODE}, we show that $T(\E[X^{\max}])$ and (an upper bound of) $\E[T(M)]$ are solutions of simple systems of ODEs.
	\item In Subsection \ref{Subsec:CVODE}, we show that these solutions converge to $S$.
\end{enumerate}

Finally, in Subsection \ref{Subsec:Naive} we will provide the proof of Proposition \ref{Prop:HydroNaive}. We now work under the asumption that $\log N / N \ll \lambda \ll 1$, except in the last subsection where we only impose $1/N \ll \lambda$.

\subsection{Estimation of the transformed process}\label{Subsec:M}

An important ingredient of our proof is a Markov process $M$ that behaves very much like $X^{\max}$, but is built in such a way that it lies below $X^{\max}$ with large probability. In a nutshell: the processes $M$ and $X^{\max}$ have the same Poisson processes for their resampling times, and at each resampling event, $M$ picks the same location $k$ as $X^{\max}$ but instead of using a random resampling variable it relies on a deterministic resampling variable that is typically much smaller than the beta r.v.~used for $X^{\max}$.\\
To introduce precisely the process, we need some preliminary estimates on the resampling variables. Let $U_k$ be a $\beta(\alpha_k,\alpha_{k+1})$ r.v. Following Lemmas \ref{esperance loi beta} and \ref{concetration beta}, there exists a constant $C>0$ such that for all $k\in \llbracket 1,N\rrbracket$, one has
\begin{equation}\label{Eq:UkVar}
\P\left(U_k \le \frac{1-\lambda}2-C\log(N)r^{-k/2}\right)\leqslant N^{-5}.
\end{equation}
For every $k\in \llbracket 1,N\rrbracket$, we set
$$\mu_k:=\begin{cases}
1-\lambda&\text{if }k < k_0\\
2C\log(N)r^{-k/2}&\text{else}
\end{cases}\;,\quad \text{with }k_0 := \Big\lfloor N\sqrt{\frac{\log N}{\lambda N}} \Big\rfloor\;.$$
Since $\lambda\gg\frac{\log(N)}{N}$, it holds $\sqrt{\frac{\log(N)}{\lambda N}}\too 0$ and we deduce that for $k \ge k_0$ we have $\mu_k \ll \lambda^2$.\\
The process $M$ is then defined as follows. Initially we take
$$M_k(0) := \begin{cases} 0 \text{ if } k < k_0\\1 \text{ else}\;.
\end{cases}$$
If $t\in \R_+$ is a resampling time of $X^{\max}$, we pick the same update site $k$ as for $X^{\max}$ but we twist the resampling mechanism in the following way: $M_k(t)$ is set to the value 
$$\frac{1+\lambda+\mu_k}{2} M_{k-1}(t_-)+\frac{1-\lambda-\mu_k}{2} M_{k+1}(t_-)\;.$$

\begin{lem}\label{Lemma:MY}
As $N\to\infty$, it holds $$\P\big(\forall t\in [0,N^2], \forall k\in\llbracket 1,N-1\rrbracket,\quad M_k(t)\leqslant X^{\max}_k(t)\big)\too 1\;.$$
\end{lem}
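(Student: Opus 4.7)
The strategy is a pathwise coupling/induction argument: we will show that, with large probability, the inequality $M(t)\leqslant X^{\max}(t)$ (coordinate-wise) is preserved at every update event in $[0,N^2]$, starting from an initial ordering.

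First, I would observe that the ordering holds at $t=0$: by construction $M_0(0)=0=X^{\max}_0(0)$, and for $k\geqslant 1$ we have $M_k(0)\leqslant 1\leqslant N=X^{\max}_k(0)$; the boundary values $M_0, M_N$ and $X^{\max}_0, X^{\max}_N$ are not updated, so the ordering is automatic there for all $t$. A quick induction using that the coefficients $(1\pm\lambda\pm\mu_k)/2$ are nonnegative and sum to $1$ (thanks to $\mu_k\ll\lambda^2$ for $k\geqslant k_0$) shows that the components of $M$ remain ordered, i.e. $M_{k-1}(t)\leqslant M_{k+1}(t)$ at all times.

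Next, I would analyse a single update event. Suppose the $k$-th clock rings at time $t$ and the pre-update ordering $M(t_-)\leqslant X^{\max}(t_-)$ holds. Let $\Theta_k\sim\beta(\alpha_k,\alpha_{k+1})$ be the resampling variable used by $X^{\max}$. Then
\begin{align*}
X^{\max}_k(t)-M_k(t)
&= \Theta_k X^{\max}_{k+1}(t_-)+(1-\Theta_k)X^{\max}_{k-1}(t_-) \\
&\quad - \tfrac{1-\lambda-\mu_k}{2} M_{k+1}(t_-)-\tfrac{1+\lambda+\mu_k}{2} M_{k-1}(t_-)\\
&\geqslant \Bigl(\Theta_k-\tfrac{1-\lambda-\mu_k}{2}\Bigr)\bigl(X^{\max}_{k+1}(t_-)-X^{\max}_{k-1}(t_-)\bigr),
\end{align*}
where the inequality uses $M_{k\pm 1}(t_-)\leqslant X^{\max}_{k\pm 1}(t_-)$ together with the nonnegativity of the coefficients. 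Since $X^{\max}_{k+1}(t_-)\geqslant X^{\max}_{k-1}(t_-)$, the post-update ordering at site $k$ is preserved as soon as $\Theta_k\geqslant\tfrac{1-\lambda}{2}-\tfrac{\mu_k}{2}$. For $k<k_0$ this reads $\Theta_k\geqslant 0$, which is automatic; for $k\geqslant k_0$ this reads $\Theta_k\geqslant \tfrac{1-\lambda}{2}-C\log(N)r^{-k/2}$, an event whose complement has probability at most $N^{-5}$ by~\eqref{Eq:UkVar}.

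Finally, a union bound finishes the proof. Let $\mathcal I$ be the total number of resampling events in $[0,N^2]$: this is a Poisson r.v.\ of parameter $(N-1)N^2$, hence $\P(\mathcal I>2N^3)\to 0$ as $N\to\infty$. Conditionally on the jump times and sites, the resampling variables $\Theta_k$ used at successive events are independent, so the probability that at least one of the first $2N^3$ updates violates the inequality $\Theta_k\geqslant (1-\lambda-\mu_k)/2$ is at most $2N^3\cdot N^{-5}=2N^{-2}$. Combining, with probability $1-o(1)$ every update preserves the ordering, and by the induction the relation $M(t)\leqslant X^{\max}(t)$ holds at every coordinate and every $t\in[0,N^2]$.

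The key point (and the only mildly nontrivial one) is the monotonicity step: rewriting the comparison as a product with $X^{\max}_{k+1}-X^{\max}_{k-1}\geqslant 0$ to reduce the inequality to the deviation estimate~\eqref{Eq:UkVar}. The rest is bookkeeping via union bounds on a polynomial number of events.
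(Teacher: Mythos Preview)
Your proof is correct and follows essentially the same approach as the paper: both argue by induction over the sequence of update times, reducing the preservation of the ordering at each update to the event $\Theta_k \geqslant \tfrac{1-\lambda-\mu_k}{2}$, and then conclude by a union bound combining the deviation estimate~\eqref{Eq:UkVar} with a polynomial bound on the number of updates in $[0,N^2]$. Your version simply spells out the inductive step more explicitly (the factorisation $(\Theta_k - \tfrac{1-\lambda-\mu_k}{2})(X^{\max}_{k+1}-X^{\max}_{k-1})$), and uses the tighter bound $2N^3$ instead of the paper's $N^4$ for the number of events; the side remark on the internal ordering of $M$ is harmless but not actually needed for the main inequality.
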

Note that $N^2 \gg N / \lambda$ provided $\lambda \gg \log N / N$. Consequently, the comparison between $M$ and $X^{\max}$ holds on the time-scale at which we aim at proving the hydrodynamic limit.
\begin{proof}
We denote $(\tau_i)_{i=0}^\ell$ the resampling times for $M$ (and $X^{\max}$) on the time-interval $[0,N^2]$. By induction, for all $i\in \llbracket 1,\ell\rrbracket$ and all $k\in\llbracket 1,N-1\rrbracket$, we have $M_k(\tau_i)\leqslant X^{\max}_k(\tau_i)$ provided we work on the event
\begin{align*}
	\A &=\bigcap_k\Big\{\text{All the resampl.~var.~for $X^{\max}_k$ before time $N^2$ are larger than }\\
		&\qquad\qquad \frac{1-\lambda-\mu_k}{2}\Big\}.
\end{align*}
Combining with the fact $\ell\leqslant N^4$ with high probability, Equation \eqref{Eq:UkVar} and the definition of the $\mu_k$'s, we easily deduce that $\P(\A) \too 1$ as $N\to \infty$.
\end{proof}

Let us introduce the transformed processes
$$ T^X_k(t) := T(X^{\max}_k(t))\;,\quad T^M_k(t) := T(M_k(t))\;.$$
Note that, by Lemma \ref{Lemma:MY} and since the map $T$ is decreasing, $T^X \le T^M$ with large probability on the time-scale we are interested in. Our goal is to show that $T^X$ converges to the solution of \eqref{Eq:HJ}. It turns out that the process $T^X$ is rather difficult to analyze directly. We will instead control the expectation of $T^M$, and the image by $T$ of the expectation of $X^{\max}$. This is the content of the following result, whose proof relies on the convergence of explicit systems of ODEs towards the solution of the Hamilton-Jacobi equation, and is postponed to the next two subsections.

\begin{prop}\label{Prop:CVschemes}
For any $(x,t) \in (0,1]\times [0,+\infty)$, $T(\E[X^{\max}_{\lfloor xN\rfloor}(tN/\lambda)])$ and $\E[T^M_{\lfloor xN\rfloor}(tN/\lambda)]$ converge (locally uniformly) to $S(x,t)$.
\end{prop}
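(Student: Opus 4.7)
The plan is to realize each of the two deterministic quantities $h^X_k(t):=T(\E[X^{\max}_k(tN/\lambda)])$ and a suitable proxy $\tilde h^M_k(t)$ bounding $\E[T^M_k(tN/\lambda)]$ as the solution of an explicit discrete-in-space scheme, show that any limit point of these schemes is a viscosity sub/super-solution of \eqref{Eq:HJ} satisfying the hypotheses of Proposition \ref{Th:Uniq}, and conclude via uniqueness.

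First I would derive the ODEs. Applying the generator $\Lf$ to the coordinate function $x_k$ gives
$$\Lf x_k = \tfrac{1+\lambda}{2}(x_{k-1}-x_k)+\tfrac{1-\lambda}{2}(x_{k+1}-x_k),$$
so after the rescaling $t\mapsto tN/\lambda$ the function $u_k(t):=\E[X^{\max}_k(tN/\lambda)]$ obeys a closed linear ODE with boundary data $u_0\equiv 0$, $u_N\equiv N$ and initial condition $u_k(0)=N$ for $k\ge 1$. Substituting $u_k=a_N(r^{-Nh^X_k}-r^{-N})$ and using the expansion $r^{N(h^X_{k+1}-h^X_k)}-1 = (\log r)N(h^X_{k+1}-h^X_k)+\tfrac12(\log r N)^2(h^X_{k+1}-h^X_k)^2+\cdots$, together with $\tfrac12\log r\sim\lambda$, the ODE for $u_k$ rewrites as a discrete approximation of $\partial_t f + \partial_x f + (\partial_x f)^2=0$ for $h^X_k$, with boundary data $h^X_0\equiv 1$, $h^X_N\equiv 0$, $h^X_k(0)=0$ ($k\ge 1$). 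For $M$, the deterministic resampling yields exactly the identity $r^{-N\phi_k(t+)}=\tfrac{1+\lambda+\mu_k}{2}r^{-N\phi_{k-1}(t)}+\tfrac{1-\lambda-\mu_k}{2}r^{-N\phi_{k+1}(t)}$ at each resampling event (where $\phi_k:=T^M_k$), so that $g_k(t):=\E[r^{-N\phi_k(tN/\lambda)}]$ satisfies a \emph{linear} ODE of the same discrete convection--diffusion type as $u_k$ (with $\lambda$ replaced by $\lambda+\mu_k$). The convexity of $y\mapsto -\tfrac1N\log_r y$ and Jensen then yield $\tilde h^M_k:=-\tfrac1N\log_r g_k \le \E[T^M_k]$; because we only need to trap $\E[T^M_k]$ between $S+o(1)$ from above and $\tilde h^M_k$ from below (both converging to $S$ forces $\E[T^M_k]\to S$), this bound suffices, after combining with Lemma~\ref{Lemma:MY} and $\E[T^M_k]\le 1-k/N+o(1)$. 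The same chain-rule manipulation as above then shows that $\tilde h^M_k$ solves a small perturbation of the scheme satisfied by $h^X_k$.

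Next I pass to the limit. Define the piecewise-linear-in-space interpolations $H^X_N(x,t):=h^X_{\lfloor xN\rfloor}(t)$ and analogously $H^M_N$. Using that $h^X_k\in[0,1-k/N]$ and that the structure of the scheme yields one-sided Lipschitz control in $k$ and equi-continuity in $t$ on any compact $[\eps,1]\times[0,T]$, I would invoke the half-relaxed-limits method of Barles--Perthame: set
$$H^*(x,t):=\limsup_{(y,s,N)\to(x,t,\infty)}H^X_N(y,s),\qquad H_*(x,t):=\liminf_{(y,s,N)\to(x,t,\infty)}H^X_N(y,s),$$
and check via perturbed-test-function arguments that $H^*$ is a viscosity sub-solution and $H_*$ a viscosity super-solution of \eqref{Eq:HJ}, the interior viscosity inequalities following from Taylor expansion of the scheme at the contact point of a smooth test function, with every perturbation of size $o_N(1)$ under the hypothesis $\log N/N\ll\lambda\ll 1$. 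Both $H^*$ and $H_*$ lie in $[0,1-x]$, and one verifies the remaining continuity hypotheses of Proposition \ref{Th:Uniq} (continuity of $x\mapsto H^*(x,t)$ at $0$ for the sub-solution and of $t\mapsto H_*(0,t)$ for the super-solution) by comparing $h^X_1$ and $h^X_0$ along the explicit scheme. Proposition \ref{Th:Uniq} then gives $H^*\le H_*$, hence $H^*=H_*=S$, which is exactly local uniform convergence of $h^X$ to $S$. The identical argument with $\tilde h^M_k$ (and the $O(\mu_k)$ perturbation, which is $o(1)$ since $\mu_k\ll\lambda$) gives the convergence of $\tilde h^M$ to $S$, and hence of $\E[T^M_k]\to S$.

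The hardest step will be the third: verifying that the relaxed limits are genuine viscosity sub/super-solutions up to the \emph{discontinuous} boundary at the corner $(0,0)$, and controlling the various error terms uniformly in the joint regime $\log N/N\ll\lambda\ll 1$. The nonlinearity $(\partial_x f)^2$ in \eqref{Eq:HJ} appears only through the second-order term in the expansion $r^{N\cdot}-1$, so the argument demands sharp control of the discrete gradient $N(h^X_{k+1}-h^X_k)$; establishing such a one-sided Lipschitz estimate, typically by a discrete maximum-principle argument on the scheme itself, is the technical crux of the proof.
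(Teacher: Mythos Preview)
Your overall architecture---derive closed ODE schemes for $T(\E[X^{\max}])$ and for a proxy of $\E[T^M]$, then pass to the limit via half-relaxed limits and invoke Proposition~\ref{Th:Uniq}---is exactly what the paper does. The scheme you write for $h^X_k$ is the paper's $f^N_X$, and your half-relaxed-limits step mirrors Proposition~\ref{Prop:CVSchemesbar}.

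There is, however, a real gap in your treatment of $\E[T^M_k]$. Your Jensen step is correct as stated: since $y\mapsto -\tfrac1N\log_r y$ is convex, $\tilde h^M_k=-\tfrac1N\log_r\E[r^{-NT^M_k}]\le \E[T^M_k]$. But this is a \emph{lower} bound, and what you need (and what is actually used downstream in the proof of Theorem~\ref{Th:PDE}) is the \emph{upper} bound $\limsup_N\E[T^M_k]\le S$. Neither Lemma~\ref{Lemma:MY} (which gives $T^M\ge T^X$ with high probability, hence again a lower bound on $\E[T^M]$) nor the crude estimate $\E[T^M_k]\le 1-k/N+o(1)$ can supply this. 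The paper obtains the missing direction by a different concavity: the two-variable map $h(x,y)=-\tfrac1\beta\log(\alpha e^{-\beta x}+(1-\alpha)e^{-\beta y})$ is concave, so Jensen applied at the level of the resampling rule shows that $t\mapsto\E[T^M_k(tN/\lambda)]$ is a \emph{sub-solution} of a scheme $H^N_M$ (distinct from your $\tilde H$), and a discrete comparison principle then yields $\E[T^M_k]\le f^N_M\to S$. In short, your $\tilde h^M_k=T(\E[M_k])$ is redundant with $h^X_k=T(\E[X^{\max}_k])$; the genuinely new ingredient for $M$ is an upper-bounding scheme, not a lower-bounding one.

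Two smaller points. First, you do not need any discrete Lipschitz or gradient control: the half-relaxed-limits method works for bounded sequences, and the paper proves Proposition~\ref{Prop:CVSchemesbar} without any estimate on $N(h^X_{k+1}-h^X_k)$. Second, you should not expect $x\mapsto H^*(x,t)$ to be continuous at $0$: the boundary value $h^X_0\equiv 1$ forces $H^*(0,t)=1$, yet Lemma~\ref{Lemma:u} gives $H^*(x,t)\le t/4$ for $x>0$. The paper handles this by replacing $H^*$ by its right-limit at $x=0$ (the function $\overline{\overline f}$), which remains a sub-solution and is continuous there; the continuity of $t\mapsto H_*(0,t)$ for the super-solution is obtained from an explicit a~priori lower bound at $x=0$ (Proposition~\ref{Prop:Apriori}) built from a smooth solution on $[0,4)$. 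These boundary issues, rather than gradient control, are the actual technical crux.
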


Combining this proposition with some comparisons between the processes $T^X$ and $T^M$, we can complete the proof of the hydrodynamic limit.

\begin{proof}[Proof of Theorem \ref{Th:PDE}]
Fix $x\in (0,1]$, $t> 0$ and set $k :=\lfloor xN \rfloor$. We will show that for any $\eps> 0$, as $N\too\infty$
\begin{equation}\label{Eq:ToProveYk}
	\P\Big(T(\E[X^{\max}_k(tN/\lambda)]) - \eps \leqslant T^X_k(tN/\lambda) \leqslant T(\E[X^{\max}_k(tN/\lambda)]) + \eps \Big) \too 1\;.
\end{equation}
Combined with the local uniform convergence stated in Proposition \ref{Prop:CVschemes}, this yields the following convergence in probability
$$ T^X_{k}(tN/\lambda)-S(x,t)\too 0\;,\quad N\to\infty\;.$$
The monotonicity in $k$ of $T^X_k$ and the continuity of $x\mapsto S(x,t)$ suffice to strengthen it into a convergence in probability which is uniform in $x$ over compact sets of $(0,1]$. We are left with the proof of \eqref{Eq:ToProveYk}. For simplicity, set $t':= tN/\lambda$.

First of all, for any $\eps > 0$
\begin{align*}
&\P\Big(T^X_k(t')\leqslant T\big(\E[X^{\max}_k(t')]\big) - \eps\Big)\\
&=\P\Big(X^{\max}_k(t')\geqslant T^{-1}\Big(T\big(\E[X^{\max}_k(t')]\big)-\eps \Big)\Big)\\
&=\P\left(X^{\max}_k(t')\geqslant a_N\left(\left(\frac{\E[X^{\max}_k(t')]}{a_N}+r^{-N}\right)r^{N\eps}-r^{-N}\right)\right)\\
&\leqslant\P\left(X^{\max}_k(t')\geqslant \E[X^{\max}_k(t')] r^{N\eps}\right)\\
&\leqslant r^{-N\eps}\;,
\end{align*}
which goes to $0$ as $N\to \infty$ since $\lambda \gg \log N / N$.

\medskip

Second, recall that $T$ is non-increasing. By Lemma \ref{Lemma:MY}, we deduce that
$$ \P(T^M_k(t') \geqslant T^X_k(t') ) \too 1\;,\quad N\to\infty\;.$$
Since $T$ is valued in $[0,1]$, we thus deduce that for all $\eps > 0$ and all $N$ large enough
\begin{equation}\label{Eq:TYM}
\E[T^X_k(t')] \leqslant \E[T^M_k(t')] + \eps\;.
\end{equation}
We have the following lower bound for any $0 < \eps < \eta$
\begin{align*}
\E[T^X_k(t')] &\geqslant  \P\big(T^X_k(t') \geqslant T(\E[X^{\max}_k(t')]) - \eps\big) (T(\E[X^{\max}_k(t')] - \eps)\\
&+ \P\big(T^X_k(t') \geqslant T(\E[X^{\max}_k(t')]) + \eta\big) (\eta+\eps)\;,
\end{align*}
which, combined with \eqref{Eq:TYM}, ensures that for all $N$ large enough
\begin{align*}
&\P\big(T^X_k(t') \geqslant T(\E[X^{\max}_k(t')]) + \eta\big)\\
&\leqslant \frac{\E[T^M_k(t')] + \eps - \P\big(T^X_k(t') \geqslant T(\E[X^{\max}_k(t')]) - \eps\big) (T(\E[X^{\max}_k(t')] - \eps)}{\eta + \eps}\;.
\end{align*}
Applying Proposition \ref{Prop:CVschemes} and the convergence already proven in the first part of the proof, we further deduce that for all $N$ large enough
\begin{align*} \P\big(T^X_k(t') \geqslant T(\E[X^{\max}_k(t')]) + \eta\big) &\leqslant \frac{S(x,t) + 2 \eps - (1-\eps)(S(x,t)-2\eps)}{\eta + \eps}\\
&\leqslant \frac{5 \eps}{\eta+\eps}\;.\end{align*}
Passing to the limit $\eps \downarrow 0$, we deduce that for any $\eta > 0$
$$\P\big(T^X_k(t') \geqslant T(\E[X^{\max}_k(t')]) + \eta\big) \too 0\;,\quad N\to\infty\;,$$
thus concluding the proof of \eqref{Eq:ToProveYk}.
\end{proof}

\subsection{Definitions and properties of the systems of ODEs}\label{Subsec:ODE}

This subsection and the next aim at proving Proposition \ref{Prop:CVschemes}. First we identify the systems of ODEs solved by $T(\E[X^{\max}])$ and (an upper bound of) $\E[T(M)]$. These systems of ODEs will look like discrete-space approximations of the Hamilton-Jacobi equation \eqref{Eq:HJ} on the set
$$ \Omega_N := \left\{\Big(\frac{k}{N},t\Big): k\in\llbracket 0,N\rrbracket\;,\; t\in\R_+\right\}\;.$$
Let us start with the systems of ODEs associated to $X^{\max}$. We set for $x,y,z \in [0,1]$
$$ H_X^N(x,y,z):= \frac{1}{\lambda\log(r)}\left(\frac{1+\lambda}{2}r^{N(y-x)}+\frac{1-\lambda}{2}r^{N(y-z)}-1\right)\;.$$

\begin{lem}
Set $f_X^N(x,t) := T(\E[X^{\max}_{xN}(tN/\lambda)])$ for all $(x,t) \in \Omega_N$. Then $f_X^N$ is the unique solution of 
$$\partial_t f_X^N(x,t)+H_X^N\left(f_X^N(x-\frac1{N},t),f_X^N(x,t),f_X^N(x+\frac1{N},t)\right) = 0\;, (x,t) \in \Omega_N\;,$$
endowed with the boundary conditions
\begin{align*}
f_X^N(0,t)=1\;,\quad f_X^N(1,t)=0\;,\quad f_X^N(x,0)=0\;.
\end{align*}
\end{lem}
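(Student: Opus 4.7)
The plan is a direct verification that the stated ODE system is simply the linear ``heat-type'' equation satisfied by $u_k(t) := \mathbb{E}[X^{\max}_k(tN/\lambda)]$, transported through the non-linear change of variable $T$. First I would apply the generator to the coordinate function $x_k$: since at rate $1$ the value $X^{\max}_k$ is resampled to $(1-\Theta_k)X^{\max}_{k-1}+\Theta_k X^{\max}_{k+1}$ with $\mathbb{E}[\Theta_k]=(1-\lambda)/2$, the family $(u_k)$ satisfies, after the rescaling $t \mapsto tN/\lambda$, the linear ODE
\begin{equation*}
u_k'(t) \;=\; \frac{N}{\lambda}\Big(\tfrac{1+\lambda}{2}\,u_{k-1}(t)+\tfrac{1-\lambda}{2}\,u_{k+1}(t)-u_k(t)\Big),\qquad k\in\llbracket 1,N-1\rrbracket,
\end{equation*}
with the boundary values $u_0\equiv 0$, $u_N\equiv N$ and initial data $u_k(0)=N$ for $k\ne 0$.

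The key algebraic step is then to transport this ODE through $T$. By the very definition of $T$, one has the fundamental identity
\begin{equation*}
\frac{u_k(t)}{a_N}+r^{-N} \;=\; r^{-N f_X^N(k/N,t)}.
\end{equation*}
Differentiating $f_X^N(k/N,t) = -(N\log r)^{-1}\log(u_k/a_N+r^{-N})$ in time, substituting the ODE for $u_k'$ and the identity above for $u_{k\pm 1}/a_N = r^{-Nf_X^N((k\pm 1)/N,t)} - r^{-N}$, the additive constants $r^{-N}$ cancel exactly: this relies on the trivial but crucial fact that the coefficients $(1+\lambda)/2$, $(1-\lambda)/2$ and $-1$ sum to zero. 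After dividing by $r^{-Nf_X^N(k/N,t)}$, what remains is, by inspection, precisely $-H_X^N$ evaluated at the triple $(f_X^N(x-1/N,t),f_X^N(x,t),f_X^N(x+1/N,t))$. This cancellation of the $r^{-N}$ terms is really what makes $T$ the ``right'' transformation and is the only slightly non-trivial step; everything else is routine.

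The boundary conditions are immediate: $f_X^N(0,t)=T(0)=-\log_r(r^{-N})/N=1$; $f_X^N(1,t)=T(N)=0$, since $N/a_N+r^{-N}=1$ by definition of $a_N$; and $f_X^N(x,0)=T(N)=0$ for $x\in (0,1]$ since $X^{\max}_k(0)=N$ for $k\ne 0$. Uniqueness presents no obstacle either: viewing the system as a finite-dimensional ODE on $\R^{N-1}$ in the interior unknowns $(f_X^N(k/N,t))_{k=1,\ldots,N-1}$, with smooth right-hand side and frozen boundary values, the Cauchy--Lipschitz theorem yields a unique global solution.
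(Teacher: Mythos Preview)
Your proof is correct and follows essentially the same approach as the paper: derive the linear ODE for $u_k=\E[X^{\max}_k(tN/\lambda)]$ from the generator, then transport it through $T$ via the chain rule, checking boundary conditions directly and invoking Cauchy--Lipschitz for uniqueness. Your explicit remark that the additive constants $r^{-N}$ cancel because the coefficients $(1+\lambda)/2$, $(1-\lambda)/2$, $-1$ sum to zero is a helpful clarification the paper leaves implicit, but otherwise the arguments coincide.
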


\begin{proof}
The fact that the evolution equation of the statement admits a unique solution is a consequence of Cauchy-Lipschitz theory. Let us check that $f^N_X$ satisfies this evolution equation. Regarding the boundary conditions, this is a direct consequence of our choice of initial condition, of the boundary conditions imposed to $X^{\max}$ and of the definition of $T$. Regarding the evolution in time, we have (abbreviating $f_X^N$ in $f$)
\begin{align*}
&\partial_t T^{-1}f(x,t)=\partial_t\E[X^{\max}_{xN}(tN/\lambda)]\\
&=\frac{N}{\lambda}\left(\frac{1+\lambda}{2}\E[X^{\max}_{xN-1}(tN/\lambda)]+\frac{1-\lambda}{2}\E[X^{\max}_{xN+1}(tN/\lambda)]-\E[X^{\max}_{xN}(tN/\lambda)]\right)\;.
\end{align*}
Consequently, 
\begin{align*}
&\partial_t f(x,t)\\
&=T'(T^{-1}(f(x,t)))\partial_t( T^{-1}f(x,t))\\
&=\frac{-r^{Nf(x,t)}}{N\log(r)a_N}\frac{N}{\lambda}\left(\frac{1+\lambda}{2}T^{-1}f(x-1/N,t)+\frac{1-\lambda}{2}T^{-1}f(x+1/N,t)-T^{-1}f(x,t)\right)\\
&=\frac{-1}{\lambda\log(r)}\left(\frac{1+\lambda}{2}r^{N(f(x,t)-f(x-1/N,t))}+\frac{1-\lambda}{2}r^{f(x,t)-f(x+1/N,t)}-1\right)\;.
\end{align*}
\end{proof}

We pass to $f_M^N$. For any $x,y,z \in [0,1]$ and any $k\in \llbracket 1,N\rrbracket$, we set
$$ H_M^N(x,y,z,k)=\frac{1}{\lambda}\log_r\left(\frac{1+\lambda+\mu_k}{2}r^{N(y-x)}+\frac{1-\lambda-\mu_k}{2}r^{N(y-z)}\right)\;.$$

\begin{lem}
Denote by $f_M^N$ the unique solution of
$$\partial_t f_M^N(x,t) + H_M^N\left(f_M^N(x-1/N,t),f_M^N(x,t),f_M^N(x+1/N,t),xN\right) = 0\;,\quad (x,t) \in \Omega_N\;,$$
endowed with the boundary conditions
\begin{align*}
f_M^N(0,t)=1\;,\quad f_M^N(1,t)=0\;,\quad f_M^N(x,0)=\begin{cases}
1&\text{if } x<\sqrt{\frac{\log(N)}{\lambda N}}\\
0&\text{if } x\ge \sqrt{\frac{\log(N)}{\lambda N}}
\end{cases}
\end{align*}
Then for all $(x,t) \in \Omega_N$, $f_M^N(x,t) \geqslant \E\big[T^M_{xN}(tN / \lambda )\big]$.
\end{lem}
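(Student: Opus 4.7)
The plan is to show that the map $k\mapsto v_k(s):=\E[T(M_k(s))]$ is, for each $s$, a sub-solution of the ODE system defining $f_M^N$ (after the time rescaling $t=\lambda s/N$), and then to conclude by a standard comparison principle for finite-dimensional quasi-monotone ODE systems.

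First, I would carry out a generator computation. Fix $k\in \llbracket 1,N-1\rrbracket$ and set $\alpha_k:=\tfrac{1+\lambda+\mu_k}{2}$, $\beta_k:=\tfrac{1-\lambda-\mu_k}{2}$. Since $M$ resamples its $k$-th coordinate at rate $1$ to the deterministic value $\alpha_k M_{k-1}+\beta_k M_{k+1}$, its generator acts on the functional $F(m)=T(m_k)$ by $\mathcal{L}F(m)=T(\alpha_k m_{k-1}+\beta_k m_{k+1})-T(m_k)$. Using the explicit form of $T$ together with $u/a_N+r^{-N}=r^{-NT(u)}$ and $\alpha_k+\beta_k=1$, one checks the algebraic identity
\[
T(\alpha_k m_{k-1}+\beta_k m_{k+1})=-\tfrac{1}{N}\log_r\!\bigl(\alpha_k r^{-NT(m_{k-1})}+\beta_k r^{-NT(m_{k+1})}\bigr).
\]
Dynkin's formula then yields, with the shorthand $\Phi_k(a,b):=\log_r\!\bigl(\alpha_k r^{-Na}+\beta_k r^{-Nb}\bigr)$,
\[
\tfrac{d}{ds}v_k(s)=-\tfrac{1}{N}\E\!\bigl[\Phi_k(T^M_{k-1}(s),T^M_{k+1}(s))\bigr]-v_k(s).
\]

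The crucial step is a Jensen inequality applied to $\Phi_k$. The log-sum-exp function $(x_1,x_2)\mapsto \log(\alpha_k e^{x_1}+\beta_k e^{x_2})$ is jointly convex, so precomposing with the affine map $(a,b)\mapsto (-Na\log r,-Nb\log r)$ and dividing by $\log r>0$ shows that $\Phi_k$ is jointly convex. Jensen then gives $\E[\Phi_k(T^M_{k-1},T^M_{k+1})]\ge \Phi_k(v_{k-1},v_{k+1})$, hence
\[
\tfrac{d}{ds}v_k(s)\le -v_k(s)-\tfrac{1}{N}\Phi_k\!\bigl(v_{k-1}(s),v_{k+1}(s)\bigr).
\]
Rescaling time by $t=\lambda s/N$ and setting $w_k(t):=v_k(tN/\lambda)$, a direct calculation that factors $r^{Nw_k}$ inside the logarithm yields $\partial_t w_k(t)\le -H_M^N\bigl(w_{k-1}(t),w_k(t),w_{k+1}(t),k\bigr)$. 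Thus $w$ is a sub-solution of the system satisfied by $f_M^N$.

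Next, I would check that boundary and initial data match exactly: since $M_0\equiv 0$ and $M_N\equiv N$ are never resampled, $w_0(t)=T(0)=1=f_M^N(0,t)$ and $w_N(t)=T(N)=0=f_M^N(1,t)$; the definition of $k_0=\lfloor N\sqrt{\log N/(\lambda N)}\rfloor$ together with the initial values $M_k(0)\in\{0,N\}$ (according to $k<k_0$ or $k\ge k_0$) give $w_k(0)=f_M^N(k/N,0)$ for every $k$. Finally, a direct calculation gives $\partial_y H_M^N=N/\lambda>0$ and $\partial_x H_M^N,\partial_z H_M^N<0$, so the vector field $-H_M^N(\cdot,\cdot,\cdot,k)$ is quasi-monotone (non-decreasing in the off-diagonal arguments) and uniformly dissipative in the diagonal one. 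A classical comparison principle for such finite-dimensional ODE systems---compare $w$ to $f_M^N+\epsilon e^{Lt}$ for $L$ absorbing Lipschitz constants, rule out a first crossing point using quasi-monotonicity, and let $\epsilon\downarrow 0$---yields $w_k(t)\le f_M^N(k/N,t)$, which is exactly the stated inequality.

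\emph{Main obstacle.} The crux is the choice of Jensen. Since $T$ is convex, the ``naive'' linearized ODE (in which the log-exp nonlinearity in $H_M^N$ were replaced by a linear convex combination of $v_{k\pm 1}$) would produce Jensen in the wrong direction; one needs the specific log-exp shape built into $H_M^N$, whose sign matches the convexity of $\Phi_k$. Once this is set up, Dynkin's formula, the time rescaling, and the quasi-monotone comparison principle are essentially routine.
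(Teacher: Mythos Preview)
Your proof is correct and follows essentially the same route as the paper: both arguments rewrite $T(\alpha_k m_{k-1}+\beta_k m_{k+1})$ as a log-sum-exp in the transformed variables, apply Jensen via the convexity of $\Phi_k$ (equivalently, the concavity of $h(x,y)=-\tfrac{1}{\beta}\log(\alpha e^{-\beta x}+(1-\alpha)e^{-\beta y})$ that the paper invokes) to show that $\E[T^M]$ is a sub-solution of the $H_M^N$ scheme, and conclude by the quasi-monotone comparison principle (the paper's Proposition~\ref{Prop:CompareScheme}). Your reading of the initial condition $M_k(0)\in\{0,N\}$ is the intended one and is needed for the equality at $t=0$.
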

We will see in the proof of Proposition \ref{Prop:CVschemes} that the convergence of $f^N_X,f^N_M$ towards $S$, the inequality $M\le X^{\max}$ that holds with large probability, and some elementary arguments, allow to deduce the convergence of $\E[T^M]$ itself.
\begin{proof}
By definitions of the process $M$ and of the map $T$, the asserted inequality is actually an equality at time $0$ and at $x\in \{0,1\}$. Regarding the evolution in time, we argue as follows. For any given $(\alpha,\beta)\in[0,1]\times\R_+^*$, the map $h(x,y)=-\frac{1}{\beta}\log\left(\alpha e^{-\beta x}+(1-\alpha)e^{-\beta y}\right)$ is concave (its Hessian is non-positive). Taking $\beta = N\log(r)$, we observe that
$$T(\alpha T^{-1}(x)+(1-\alpha)T^{-1}(y))=h(x,y)\;.$$
so that, taking further $\alpha = (1+\lambda+\mu_k)/2$, we deduce from Jensen's inequality
\begin{align*}
&\E\left[T\left(\frac{1+\lambda+\mu_k}{2} M_{k-1}(tN/\lambda)+\frac{1-\lambda-\mu_k}{2} M_{k+1}(tN/\lambda)\right)\right]\\
&\le T\left(\frac{1+\lambda+\mu_k}{2} T^{-1}(\E[T^M_{k-1}(tN/\lambda)])+\frac{1-\lambda-\mu_k}{2} T^{-1}(\E[T^M_{k+1}(tN/\lambda)])\right)\;.
\end{align*}
Consequently a simple computation shows that
\begin{align*}
\partial_t\E\big[T^M_k(tN/\lambda)\big]&=\frac{N}{\lambda}\Big(\E\left[T\left(\frac{1+\lambda+\mu_k}{2} M_{k-1}(tN/\lambda)+\frac{1-\lambda-\mu_k}{2} M_{k+1}(tN/\lambda)\right)\right]\\&\quad-\E\big[T^M_k(tN/\lambda)\big]\Big)\\
&\leqslant -\frac{1}{\lambda}\log_r\Big(\frac{1+\lambda+\mu_k}{2}r^{-N\E[T^M_{k-1}(tN/\lambda)]+N\E[T^M_k(tN/\lambda)]}\\
&\qquad\qquad +\frac{1-\lambda-\mu_k}{2}r^{-N\E[T^M_{k+1}(tN/\lambda)]+N\E[T^M_k(tN/\lambda)]}\Big)\\
&\leqslant -H_M^N\left(T^M_{k-1}(tN/\lambda),T^M_{k}(tN/\lambda),T^M_{k+1}(tN/\lambda),k\right)\;.
\end{align*}
According to Definition \ref{defSol} given in the next subsection, we remark $\E\big[T^M_k(tN/\lambda)\big]$ is a subsolution for the system associated to $H_M^N$. We conclude by Proposition \ref{Prop:CompareScheme}.
\end{proof}

Let us now collect some useful properties of the systems.
\begin{lem}\label{increasing}
The maps $(x,y,z)\mapsto H_X^N(x,y,z)$ and, for any given $k$, $(x,y,z)\mapsto H_M^N(x,y,z,k)$ are non-increasing in their variables $x$ and $z$. In addition, we have for all $x,y,z,a$
$$ H_X^N(x+a,y+a,z+a) =H_X^N(x,y,z)\;,\quad H_M^N(x+a,y+a,z+a,k) = H_M^N(x,y,z,k)\;.$$
Finally, the functions $t\mapsto f_X^N(x,t)$ and $t\mapsto f_M^N(x,t)$ are non-decreasing.
\end{lem}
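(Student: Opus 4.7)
\emph{Claims (1) and (2).} Both reduce to elementary computations. Translation invariance is immediate since $H_X^N$ and $H_M^N$ depend on $(x,y,z)$ only through the differences $y-x$ and $y-z$. For monotonicity in the first and third arguments, direct partial differentiation yields e.g.~$\partial_x H_X^N(x,y,z)=-\frac{N(1+\lambda)}{2\lambda}r^{N(y-x)}\leq 0$, and analogously for $\partial_z$ and for the corresponding derivatives of $H_M^N$. The only point to verify is that the coefficient $\frac{1-\lambda-\mu_k}{2}$ appearing in $H_M^N$ is non-negative: this is trivial for $k<k_0$ (where $\mu_k=1-\lambda$, so the coefficient vanishes), and holds for $k\geq k_0$ because $\mu_k=2C\log(N)r^{-k/2}\ll\lambda$ under the standing assumption $\lambda\gg\log(N)/N$.

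\emph{Time monotonicity of $f_X^N$.} Here the plan is to use the probabilistic interpretation. Since pointwise $X^{\max}(s)\leq {\max}$ for every $s\geq 0$, the monotone coupling of Subsection~\ref{coupling} yields $\E[X^y_k(t)]\leq \E[X^{\max}_k(t)]$ whenever $y\leq{\max}$. Combined with the Markov property,
$$\E[X^{\max}_k(t+s)]=\E\!\left[\E[X^{X^{\max}(s)}_k(t)\mid X^{\max}(s)]\right]\leq \E[X^{\max}_k(t)]\,,$$
so $t\mapsto\E[X^{\max}_k(t)]$ is non-increasing. Since $T$ is non-increasing, $f_X^N=T(\E[X^{\max}])$ is non-decreasing in time.

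\emph{Time monotonicity of $f_M^N$.} Here the probabilistic route is unavailable, since $f_M^N$ is only a deterministic upper bound on $\E[T^M]$, so I proceed in two steps using the comparison principle for the scheme (to be established as Prop.~\ref{Prop:CompareScheme}). \emph{Step 1:} Show that $f_M^N(x,t)\geq f_M^N(x,0)$ for all $(x,t)\in\Omega_N$. A finite case analysis at $t=0$, splitting on whether $k$ lies below, at, or above $k_0$ and on the boolean values in $\{0,1\}$ of the three relevant initial data, verifies $\partial_t f_M^N(k/N,0)\geq 0$, i.e.\ $H_M^N(\cdot,\cdot,\cdot,k)\leq 0$ at $t=0$. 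This is propagated to all times by a perturbed minimum-principle argument: for $\delta>0$ set $\phi_\delta(t):=\min_k(f_M^N(k/N,t)-f_M^N(k/N,0))+\delta t$, so $\phi_\delta(0)=0$; if $\phi_\delta$ ever became strictly negative, at the first crossing time $t_0>0$ one could extract (by continuity and finiteness of the index set) a minimizing index $k^*$ with $h_{k^*}(t_0):=f_M^N(k^*/N,t_0)-f_M^N(k^*/N,0)=-\delta t_0$ and, from the jump, $h_{k^*}'(t_0)\leq-\delta$. Combining the monotonicity of $H_M^N$ in its first and third arguments (claim~1), the translation invariance $H_M^N(a+c,b+c,d+c,k)=H_M^N(a,b,d,k)$ (claim~2) applied with $c=-\delta t_0$ to absorb the common shift, and $\partial_t f_M^N(k^*/N,0)\geq 0$ from the case analysis, one gets $h_{k^*}'(t_0)\geq 0$, a contradiction; letting $\delta\downarrow 0$ concludes. \emph{Step 2:} Apply Prop.~\ref{Prop:CompareScheme} to $u(x,t):=f_M^N(x,t)$ and $v(x,t):=f_M^N(x,t+s)$ for any $s\geq 0$: both solve the same scheme with identical boundary data, and Step~1 gives $v(\cdot,0)=f_M^N(\cdot,s)\geq f_M^N(\cdot,0)=u(\cdot,0)$, whence $v\geq u$, i.e.~$f_M^N(x,t+s)\geq f_M^N(x,t)$. \emph{The main obstacle} is Step~1: without the $\delta$-perturbation one can only derive $h_{k^*}'(t_0)\geq 0$, which does not preclude $\phi$ from becoming negative via a different minimizing index, and it is precisely the interplay of the perturbation with the translation invariance (claim~2) that creates the strict gap needed for the contradiction.
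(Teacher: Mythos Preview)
Your proof is correct, but it proceeds along a genuinely different route from the paper's.

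For the time monotonicity, the paper gives a single ODE argument valid for both $f_X^N$ and $f_M^N$: it sets $t_0=\inf\{t\geq 0:\exists k,\ \partial_t f^N(k/N,t)<0\}$, observes that at $t_0$ all time-derivatives are $\geq 0$, and then, on any maximal block of sites where the first derivative vanishes, iterates the identity
\[
\partial_t^{i+1}f^N(k/N,t_0)=-\partial_x H^N\cdot\partial_t^{i}f^N((k-1)/N,t_0)-\partial_z H^N\cdot\partial_t^{i}f^N((k+1)/N,t_0)
\]
to show that the first non-vanishing higher time-derivative is strictly positive, contradicting the definition of $t_0$.

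Your approach splits the two functions. For $f_X^N$ you exploit the probabilistic representation: the monotone coupling and the Markov property give directly that $t\mapsto\E[X^{\max}_k(t)]$ is non-increasing, whence $f_X^N=T(\E[X^{\max}])$ is non-decreasing. This is shorter and more transparent than the paper's argument, and nicely ties the analytic statement back to the process. For $f_M^N$ you run a perturbed minimum principle (your Step~1) together with the forward reference to Proposition~\ref{Prop:CompareScheme} (your Step~2). This is legitimate: Proposition~\ref{Prop:CompareScheme} only uses the elementary monotonicity of $H^N$ in its first and third arguments (your claim~1), so there is no circularity; and your verification that $v(x,t):=f_M^N(x,t+s)$ is a super-solution for the $H_M^N$ scheme (boundary data unchanged, $v(\cdot,0)=f_M^N(\cdot,s)\geq f_M^N(\cdot,0)$ by Step~1) is exactly what is needed. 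The case analysis at $t=0$ is correct; in particular at $k=k_0-1$ the coefficient $\tfrac{1-\lambda-\mu_{k_0-1}}{2}=0$ kills the potentially large term $r^N$, and at $k=k_0$ the argument of $\log_r$ is strictly less than $1$.

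In short: the paper's route is self-contained and treats both functions uniformly via higher derivatives; yours trades that uniformity for a cleaner probabilistic proof for $f_X^N$ and a minimum-principle/comparison argument for $f_M^N$ that avoids the higher-derivative bookkeeping at the cost of invoking a result stated later.
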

\begin{proof}
The stated properties on $H_X^N$ and $H_M^N$ are immediate from the definitions of these maps. We turn to the monotonicity of $t\mapsto f_X^N(x,t)$ and $t\mapsto f_M^N(x,t)$, and restrict ourselves to the former since the arguments are identical for the latter. We start with the following observation. Set $i\ge 1$, $t_0\ge 0$ and assume that for some set $C=\{k_0,k_0+1,\ldots,\ell_0\}$ we have $\partial_t^\ell f_X^N(k/N,t_0) = 0$ for every $k\in C$ and every $1\le \ell\le i-1$, $\partial_t^i f_X^N(k/N,t_0) > 0$ for $k=k_0, \ell_0$ and $\partial_t^i f_X^N(k/N,t_0) = 0$ for $k \in\{k_0+1,\ldots,\ell_0-1\}$. Then for all $k\in \{k_0+1,\ldots,\ell_0-1\}$ (a priori this expression contains many more terms but they all vanish)
\begin{align*}
	&\partial_t^{i+1}  f_X^N(\frac{k}{N},t_0)\\
	&= -\partial_x H_X^N(f_X^N(\frac{k-1}{N},t_0),f_X^N(\frac{k}{N},t_0),f_X^N(\frac{k+1}{N},t_0))\; \partial_t^{i} f_X^N(\frac{k-1}{N},t_0)\\
	&- \partial_z H_X^N(f_X^N(\frac{k-1}{N},t_0),f_X^N(\frac{k}{N},t_0),f_X^N(\frac{k+1}{N},t_0)) \;\partial_t^{i} f_X^N(\frac{k+1}{N},t_0)\;,
\end{align*}
 so that $\partial_t^{i+1}  f_X^N(k/N,t_0) > 0$ for $k = k_0+1,\ell_0-1$ and $\partial_t^{i+1}  f_X^N(k/N,t_0) = 0$ for $k\in \{k_0+2,\ldots,\ell_0-2\}$.\\
Let $t_0 := \inf\{t\ge 0: \exists k, \partial_t f_X^N(k/N,t) < 0\}$ and assume it is finite. Necessarily at time $t_0$, all $\partial_t f_X^N(k/N,t)$ are either positive or null. If all these derivatives are null, then $f_X^N$ remains constant in time after $t_0$ (indeed, it solves the equation that admits a unique solution by Cauchy-Lipschitz Theorem) and this raises a contradiction with the definition of $t_0$. Otherwise, take $i=1$ and let $C = \{k_0,\ldots,\ell_0\}$ be a set of sites at which these derivatives vanish except at $k_0,\ell_0$. Then applying successively the observation above, we deduce that for every $k\in C$, the first non-zero higher derivative of $f_X^N(k/N,t_0)$ is positive. This contradicts the definition of $t_0$.
\end{proof}

We prove that both $H^N_X$ and $H^N_M$ ``converge'' to $H(x) = x+x^2$, the Hamiltonian of our PDE.

\begin{prop}\label{Prop:PDE}
Fix $(x,t) \in \Omega$ and some function $\phi$ that is $\mathcal{C}^\infty$ in a neighbourhood of $(x,t)$. Let $(x_N,t_N)_N$ be a sequence of points in $\Omega$ that converges to $(x,t)$. Then
\begin{align*}
H_X^N(\phi(x_N-1/N,t),\phi(x_N,t),\phi(x_N+1/N,t))\too\partial_x\phi(x,t)+(\partial_x\phi(x,t))^2\;.
\end{align*}
The same convergence holds with $H^N_X$ replaced by $H^N_M$ provided we further assume that  $x_N\ge k_0/N$.
\end{prop}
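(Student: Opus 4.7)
The plan is to perform a Taylor expansion of the two Hamiltonians using the smoothness of $\phi$ and the asymptotics $\log r = 2\lambda + O(\lambda^3)$ as $\lambda\to 0$. Both quantities reduce, in the regime $\log N/N \ll \lambda \ll 1$, to exponentials $e^{a_N}$ and $e^{b_N}$ of small quantities (of order $\lambda$), so a second order expansion suffices.

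First I would set
\begin{align*}
a_N &:= \log r\cdot N\big(\phi(x_N,t)-\phi(x_N-\tfrac{1}{N},t)\big),\\
b_N &:= \log r\cdot N\big(\phi(x_N,t)-\phi(x_N+\tfrac{1}{N},t)\big),
\end{align*}
so that $r^{N(\phi(x_N,t)-\phi(x_N\mp 1/N,t))}=e^{a_N}$, $e^{b_N}$. The smoothness of $\phi$ and $\log r = O(\lambda)\to 0$ yield
$$a_N+b_N=-\tfrac{\log r}{N}\partial_{xx}\phi(x,t)+o(\lambda/N),\qquad a_N-b_N=2\log r\cdot\partial_x\phi(x,t)+o(\lambda/N),$$
with $a_N,b_N=O(\lambda)$. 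For $H_X^N$, expanding $e^{a_N}=1+a_N+a_N^2/2+O(\lambda^3)$ (and similarly for $b_N$) one gets
$$\tfrac{1+\lambda}{2}e^{a_N}+\tfrac{1-\lambda}{2}e^{b_N}-1=\tfrac{a_N+b_N}{2}+\lambda\tfrac{a_N-b_N}{2}+\tfrac{a_N^2+b_N^2}{4}+O(\lambda^3).$$
Substituting the expressions above and dividing by $\lambda\log r$ I obtain
$$H_X^N\big(\phi(x_N-\tfrac{1}{N},t),\phi(x_N,t),\phi(x_N+\tfrac{1}{N},t)\big)=-\tfrac{\partial_{xx}\phi(x,t)}{2N\lambda}+\partial_x\phi(x,t)+\tfrac{\log r}{2\lambda}(\partial_x\phi(x,t))^2+o(1).$$
The hypothesis $\lambda\gg \log N/N$ ensures $1/(N\lambda)\to 0$, while $\log r/(2\lambda)\to 1$ as $\lambda\to 0$, giving the desired limit $\partial_x\phi+(\partial_x\phi)^2$.

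For $H_M^N$, the same computation applies: the argument of $\log_r$ is $1+\Delta_N$ with
$$\Delta_N=-\tfrac{\log r}{2N}\partial_{xx}\phi+(\lambda+\mu_k)\log r\cdot\partial_x\phi+\tfrac{(\log r)^2}{2}(\partial_x\phi)^2+O(\lambda^3),$$
and since $\Delta_N=O(\lambda^2)$, one has $\log(1+\Delta_N)=\Delta_N+O(\lambda^4)$, so that dividing by $\lambda\log r\sim 2\lambda^2$ the quadratic correction $\Delta_N^2$ contributes $O(\lambda^2)\to 0$. The only new ingredient is to show that $\mu_k/\lambda\to 0$ uniformly for $k=\lfloor x_N N\rfloor\ge k_0$: for such $k$ one has $r^{-k/2}\le r^{-k_0/2}=\exp(-k_0\log r/2)$, and since $k_0\log r\sim 2\lambda k_0=2\sqrt{\lambda N\log N}\gg \log N$, we get $\mu_k\ll \log N/N\ll \lambda$. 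This makes the term $(1+\mu_k/\lambda)\partial_x\phi$ converge to $\partial_x\phi$, and everything else is as in the $H_X^N$ case.

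The main (mild) obstacle is bookkeeping: one must verify that each remainder is small compared with the effective scale $\lambda\log r\sim 2\lambda^2$ by which we divide. Once the three facts $1/(N\lambda)\to 0$, $\log r/(2\lambda)\to 1$ and $\mu_k/\lambda\to 0$ are isolated, the convergence follows at once.
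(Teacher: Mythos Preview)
Your proof is correct and follows the same route as the paper: Taylor-expand the exponentials in $H_X^N$ and $H_M^N$, use $\log r\sim 2\lambda$, and identify the three key facts $1/(N\lambda)\to 0$, $\log r/(2\lambda)\to 1$, and $\mu_k/\lambda\to 0$ for $k\ge k_0$. The paper's proof is merely a sketch (``a straightforward Taylor expansion''), whereas you carry it out explicitly; the only minor imprecision is the claimed error $o(\lambda/N)$ in $a_N-b_N$ when passing from $\partial_x\phi(x_N,t)$ to $\partial_x\phi(x,t)$, but since after dividing by $\lambda\log r$ you only need $\partial_x\phi(x_N,t)\to\partial_x\phi(x,t)$, this is harmless.
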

\begin{proof}
The regularity of $\phi$ near $(x,t)$ ensures that as $N\to\infty$ 
\begin{align*}
&N\big(\phi(x_N-1/N,t_N)-\phi(x_N,t_N)\big)=-\partial_x\phi(x_N,t_N)+O(N^{-1})\\
&N\big(\phi(x_N+1/N,t_N)-\phi(x_N,t_N)\big)=\partial_x\phi(x_N,t_N)+O(N^{-1})\;.
\end{align*}
Recall that $r=\frac{1+\lambda}{1-\lambda}$ and that $\log(r)\sim 2\lambda$ as $N\to\infty$. Then a straightforward Taylor expansion applied to the exponential terms of $H^N_X$ yield the desired convergence.\\
Regarding $H_M^N$, the only difference comes from the $\mu_k$'s. As $x_N\ge k_0/N$, we can bound $\mu_k\leqslant \mu_{k_0}$ and we note that $\mu_{k_0}\ll \lambda^2$ so that this term has negligible contributions and some Taylor expansions yield the desired bound.
\end{proof}

\begin{rem}
	This result is no longer true when the asymmetry parameter $\lambda > 0$ is independent of $N$. In that setting, one gets in the limit for $H^N_X$
	$$ \frac{1}{\lambda\log(r)}\left(\frac{1+\lambda}{2}r^{-\partial_x \phi}+\frac{1-\lambda}{2}r^{\partial_x \phi}-1\right)\;,$$
	and for $H^N_M$ 
	$$ \frac{1}{\lambda\log(r)}\log\left(\frac{1+\lambda}{2}r^{-\partial_x \phi}+\frac{1-\lambda}{2}r^{\partial_x \phi}\right)\;.$$
	This is the reason why we are not able to identify the hydrodynamic limit (under the transformation $T$) in that setting, and thus rely on the simpler hydrodynamic limit stated in Proposition \ref{Prop:HydroNaive}.
\end{rem}

\subsection{Convergence of the systems of ODEs}\label{Subsec:CVODE}

We need an a priori estimate on the solutions of the systems of ODEs, the proof of which is postponed to the end of the subsection.

\begin{prop}\label{Prop:Apriori}
	There exists some sequence $\eps_N$ going to $0$ as $N\to\infty$ such that
$$ 0 \le f_X^N(x,t) \le 1-x\;,\quad 0 \le f_M^N(x,t) \le (1+\eps_N)(1-x)\;,\quad (x,t) \in \Omega_N\;,$$
and
$$ f_X^N(0,t) , f_M^N(0,t) \ge (1-\eps_N) \min\big(\frac{t}{4},1\big) \;,\quad t\ge 0\;.$$
\end{prop}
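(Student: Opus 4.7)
My plan is to treat the two functions $f_X^N$ and $f_M^N$ separately, using a probabilistic argument for the former and the discrete comparison principle (Proposition \ref{Prop:CompareScheme}) for the latter. The lower bounds at $x = 0$ will be trivial, since the prescribed boundary conditions force $f_X^N(0, t) = f_M^N(0, t) = 1$, which exceeds $(1-\eps_N)\min(t/4, 1)$ for any $\eps_N \in [0,1)$.

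For $f_X^N$, I will observe that, writing $\tau = tN/\lambda$, the expectation $\E[X^{\max}_k(\tau)]$ satisfies the linear ODE
$$\partial_\tau \E[X^{\max}_k] = \frac{1+\lambda}{2}\E[X^{\max}_{k-1}] + \frac{1-\lambda}{2}\E[X^{\max}_{k+1}] - \E[X^{\max}_k]\;,$$
with Dirichlet values $0$ at $k = 0$ and $N$ at $k = N$. The unique stationary profile is the equilibrium profile $\overline{x}_k = N(r^k-1)/(r^N-1)$, and a direct computation yields $T(\overline{x}_k) = 1 - k/N$. The difference $y_k(\tau) := \E[X^{\max}_k(\tau)] - \overline{x}_k$ satisfies the same ODE with non-negative initial data ($y_k(0) = N - \overline{x}_k \ge 0$) and vanishing boundary values; since the operator is the generator of a random walk absorbed at the endpoints, the associated semigroup preserves non-negativity and $y_k(\tau) \ge 0$. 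Applying the decreasing map $T$ gives $f_X^N(k/N, t) \le T(\overline{x}_k) = 1 - k/N$. The bound $f_X^N \ge 0$ is immediate from $\E[X^{\max}_k] \le N$ and $T(N) = 0$.

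For $f_M^N$, the lower bound $f_M^N \ge 0$ will follow by comparison with the stationary sub-solution $0$ (using $H_M^N(0,0,0,k) = 0$). For the upper bound, I plan to argue in two steps. First, comparison with the stationary super-solution $1$ (using $H_M^N(1,1,1,k) = 0$) yields $f_M^N \le 1$; combined with the monotonicity of $t \mapsto f_M^N(x,t)$ (Lemma \ref{increasing}) and the initial data $f_M^N(k/N, 0) = 1$ for $k < k_0$, this forces $f_M^N(k/N, t) = 1$ for all $k < k_0$ and $t \ge 0$. Second, on the region $k \ge k_0$, I will use the candidate super-solution $\phi(x,t) := (1+\eps_N)(1-x)$ with $\eps_N := 2\sqrt{\log N/(\lambda N)}$, which tends to $0$ under our hypothesis. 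The super-solution inequality reduces to the algebraic condition
$$\frac{1+\lambda+\mu_k}{2}\,r^{-(1+\eps_N)} + \frac{1-\lambda-\mu_k}{2}\,r^{1+\eps_N} \ge 1\;,$$
and Taylor-expanding in $\eps_N$ at $0$, the left-hand side equals $1 - \frac{2\lambda\mu_k}{1-\lambda^2} + \eps_N \log(r)\big[\lambda - \frac{\mu_k(1+\lambda^2)}{1-\lambda^2}\big] + O(\eps_N^2)$, so the inequality holds as soon as $\eps_N$ dominates $\mu_k/\lambda$. For $k \ge k_0$, the bound $\mu_k \le \mu_{k_0} = 2C\log(N)\, r^{-k_0/2}$ together with the estimate $r^{-k_0/2} \le \exp\bigl(-(1+o(1))\sqrt{\lambda N \log N}\bigr)$ and the fact $\sqrt{\lambda N \log N} \gg \log N$ (a consequence of $\lambda \gg \log N/N$) give $\mu_{k_0}/\lambda \ll \log N/(\lambda N) \ll \eps_N$. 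The boundary and initial matchings $\phi(k_0/N, t) \ge 1 \ge f_M^N(k_0/N, t)$ (valid since $\eps_N \ge 2 k_0/N$), $\phi(1, t) = 0 = f_M^N(1, t)$, and $\phi(x, 0) \ge 0 = f_M^N(x, 0)$ for $x \ge k_0/N$ are readily checked. Applying Proposition \ref{Prop:CompareScheme} on the restricted region $\{k_0, \ldots, N\}$ then yields $f_M^N(x, t) \le (1+\eps_N)(1-x)$ for $x \ge k_0/N$; the same bound for $x < k_0/N$ follows immediately from $f_M^N = 1$ and $(1+\eps_N)(1-x) \ge 1$ there.

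The main obstacle will be the tuning of $\eps_N$: it must vanish as $N \to \infty$ while simultaneously dominating $\mu_{k_0}/\lambda$ (for the super-solution inequality to hold on $\{k_0, \ldots, N\}$) and $k_0/N$ (for $\phi$ to dominate the boundary value of $f_M^N$ at $k_0$). The hypothesis $\lambda \gg \log N/N$ is precisely what ensures that both of these quantities are much smaller than $\sqrt{\log N/(\lambda N)}$, and hence that the choice $\eps_N = 2\sqrt{\log N/(\lambda N)}$ satisfies both constraints with room to spare. A secondary technical point is formalizing the use of the comparison principle on the restricted region with boundary value at $k_0$ inherited from the global bound $f_M^N \le 1$; this should be a direct adaptation of the comparison argument used on the full domain.
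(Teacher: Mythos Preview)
Your treatment of the upper bounds is correct and essentially parallel to the paper's. For $f_X^N$ you use a probabilistic/maximum-principle argument on the linear evolution of $\E[X^{\max}_k]$, while the paper simply checks that $v_X(x,t)=1-x$ is a super-solution for the scheme; both routes are short and equivalent. For $f_M^N$ the paper builds a piecewise super-solution on the whole lattice,
\[
v_M(x,t)=\begin{cases}1&x<k_0/N\\ c_N(1-x)&x\ge k_0/N\end{cases},\qquad c_N=\frac{1}{1-k_0/N},
\]
whereas you first establish $f_M^N\equiv 1$ on $\{0,\dots,k_0-1\}$ and then run the comparison on the restricted lattice. These are the same idea; just note that in your restricted comparison the left boundary datum should be imposed at site $k_0-1$ (where you have proved $f_M^N\equiv 1$), not at $k_0$, since the ODE at $k_0$ reads the value at $k_0-1$.

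The real discrepancy is the lower bound at $x=0$. You are right that the \emph{stated} inequality $f^N(0,t)\ge(1-\eps_N)\min(t/4,1)$ is trivially implied by the Dirichlet condition $f^N(0,t)=1$. However, the paper's proof does much more than this, and that extra content is essential downstream. The paper constructs an explicit sub-solution $u^N(x,t)=f(x,t)-C\max(\lambda,1/(N\lambda))\,t$ with $f(x,t)=-x^2/(16-4t)-x/2+t/4$, valid on the \emph{whole} lattice, so that $f^N(x,t)\ge u^N(x,t)$ for all $(x,t)\in\Omega_N$. This is what is actually invoked in the proof of Proposition~\ref{Prop:CVschemes} to obtain $\underline f(0,t)\ge t/4$: recall $\underline f(0,t)=\liminf f^N(y_N,s_N)$ over \emph{all} sequences $(y_N,s_N)\to(0,t)$, in particular sequences with $y_N>0$, and the boundary value $f^N(0,t)=1$ says nothing about those. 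Your trivial argument proves the literal statement but not the result the paper needs and actually establishes; if you intend this proposition to feed into Proposition~\ref{Prop:CVschemes}, you must supply a lower bound on $f^N$ in a neighbourhood of $x=0$, as the paper does.
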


The boundedness of these sequences allows us to introduce the following functions on $[0,1]\times\R^+$
\begin{align*}
\overline{f}_X(x,t)&:=\limsup_{(y_N,s_N)\to (x,t)} f_X^N(y_N,s_N)\;,\quad \underline{f}_X(x,t)&:=\liminf_{(y_N,s_N)\to (x,t)} f_X^N(y_N,s_N)\;,\\
\overline{f}_M(x,t)&:=\limsup_{(y_N,s_N)\to (x,t)} f_M^N(y_N,s_N)\;,\quad \underline{f}_M(x,t)&:=\liminf_{(y_N,s_N)\to (x,t)}f_M^N(y_N,s_N)\;,
\end{align*}
where the $\liminf$ and $\limsup$ are taken over all sequences of points $(y_N,s_N)\in\Omega_N$ that converge to $(x,t)$.

\begin{prop}\label{Prop:CVSchemesbar}
The functions $\overline{f}_X$ and $\overline{f}_M$ are viscosity \textbf{sub}-solution of \eqref{Eq:HJ}, while the functions $\underline{f}_X$ and $\underline{f}_M$ are viscosity \textbf{super}-solution of \eqref{Eq:HJ}.
\end{prop}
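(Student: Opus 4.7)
The statement has the classical flavour of a Barles–Souganidis convergence result for monotone numerical schemes, and the three hypotheses of that framework have been prepared upstream: monotonicity and translation invariance of the discrete Hamiltonians (Lemma \ref{increasing}), consistency with the continuous Hamiltonian $H(p)=p+p^2$ (Proposition \ref{Prop:PDE}), and uniform stability through the a priori bounds (Proposition \ref{Prop:Apriori}). I will describe the argument for $\overline{f}_X$ being a sub-solution; the super-solution statement for $\underline{f}_X$ follows from the symmetric argument (replacing strict local max by strict local min and reversing the inequalities), and $\overline{f}_M, \underline{f}_M$ are handled identically modulo minor modifications near $x=0$ which are addressed at the end.

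\textbf{Interior case.} Fix $(x_0,t_0)\in\Omega$ and a smooth test function $\phi$ such that $\overline{f}_X-\phi$ attains a \emph{strict} local max at $(x_0,t_0)$; one can always reduce to this case by adding a small quadratic perturbation. On a compact neighbourhood $K$ of $(x_0,t_0)$ in $\overline\Omega$, let $(x_N,t_N)\in K\cap\Omega_N$ be an argmax of $f_X^N-\phi$. A standard compactness argument based on the strict-max property and on the definition of $\overline{f}_X$ as a relaxed upper limit shows $(x_N,t_N)\to(x_0,t_0)$ and $f_X^N(x_N,t_N)\to\overline{f}_X(x_0,t_0)$. For $N$ large enough $t_N>0$ and $x_N\in(1/N,1-1/N)$, so all discrete variations are admissible, giving $\partial_t f_X^N(x_N,t_N)=\partial_t\phi(x_N,t_N)$ and $f_X^N(x_N\pm 1/N,t_N)\leq f_X^N(x_N,t_N)+\phi(x_N\pm 1/N,t_N)-\phi(x_N,t_N)$. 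Setting $a:=f_X^N(x_N,t_N)-\phi(x_N,t_N)$ and combining the monotonicity of $H_X^N$ in its first and third arguments with the translation invariance $H_X^N(\cdot+a,\cdot+a,\cdot+a)=H_X^N(\cdot,\cdot,\cdot)$, one obtains
$$H_X^N\bigl(f_X^N(x_N-\tfrac1N,t_N),f_X^N(x_N,t_N),f_X^N(x_N+\tfrac1N,t_N)\bigr)\geq H_X^N\bigl(\phi(x_N-\tfrac1N,t_N),\phi(x_N,t_N),\phi(x_N+\tfrac1N,t_N)\bigr).$$
Injecting this into the scheme's ODE yields $\partial_t\phi(x_N,t_N)+H_X^N(\phi(x_N-\tfrac1N,t_N),\phi(x_N,t_N),\phi(x_N+\tfrac1N,t_N))\leq 0$, and passing to the limit via the consistency result (Proposition \ref{Prop:PDE}) gives the sub-solution inequality $\partial_t\phi(x_0,t_0)+\partial_x\phi(x_0,t_0)+(\partial_x\phi(x_0,t_0))^2\leq 0$.

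\textbf{Boundary cases.} At $(0,t_0)$ and $(1,t_0)$ with $t_0>0$, the sub-solution boundary inequality for $\overline{f}_X$ is free from the a priori estimate: $\overline{f}_X(0,t_0)\leq 1=\mathrm{BC}$ follows from $f_X^N\leq 1-x$, and $\overline{f}_X(1,t_0)\leq 0=\mathrm{BC}$ follows from the same bound evaluated at $x$ close to $1$. At $(x_0,0)$ with $x_0\in(0,1]$, a dichotomy is needed on the argmax sequence: either $t_N=0$ along a subsequence, in which case $f_X^N(x_N,0)=0$ forces $\overline{f}_X(x_0,0)\leq 0=\mathrm{BC}$; or $t_N>0$ for all large $N$ and the interior argument applies verbatim (using the smoothness of $\phi$ up to $t=0$) to produce the viscosity sub-solution inequality. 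The super-solution analysis for $\underline{f}_X$ proceeds analogously: at $(x,0)$ and $(1,t_0)$ non-negativity gives $\underline{f}_X\geq 0=\mathrm{BC}$ for free; at $(0,t_0)$ the key dichotomy is between an argmin with $x_N=0$ for infinitely many $N$ (yielding $\underline{f}_X(0,t_0)=1=\mathrm{BC}$ from the scheme's boundary value) and an argmin with $x_N\geq 1/N$ for large $N$, where the interior argument delivers the super-solution inequality (note that when $x_N=1/N$ the value at $x_N-1/N=0$ is $1$, which is fine since the scheme is valid as soon as $k\in\{1,\dots,N-1\}$).

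\textbf{Adaptation to $f_M^N$ and main obstacle.} The argument transfers without change except that the consistency in Proposition \ref{Prop:PDE} for $H_M^N$ requires $x_N\geq k_0/N$. This only matters near $x_0=0$; but when $x_N<k_0/N$ one has $f_M^N(x_N,0)=1$ by construction and $t\mapsto f_M^N(x_N,t)$ is non-decreasing (Lemma \ref{increasing}), so $f_M^N(x_N,t_N)\geq 1$, which together with the upper bound $f_M^N\leq(1+\eps_N)(1-x)$ gives directly $\underline{f}_M(0,t_0)\geq 1$ (super-solution boundary condition) and $\overline{f}_M(0,t_0)\leq 1$ (sub-solution boundary condition). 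Otherwise $x_N\geq k_0/N$ and the standard consistency argument applies. The main conceptual obstacle throughout is handling the discontinuous boundary condition at $(0,0)$ and the resulting boundary analysis at $(0,t_0)$: there, the a priori bounds alone are insufficient, and one really needs to exploit the exact boundary value $f_X^N(0,t)=1$ of the scheme through the dichotomy described above.
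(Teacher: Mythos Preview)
Your proof is correct and follows essentially the same Barles--Souganidis stability argument as the paper: both extract discrete argmax/argmin points converging to the test point, use the monotonicity and translation invariance of $H^N$ (Lemma \ref{increasing}) to replace $f^N$ by $\phi$ in the scheme, and pass to the limit via the consistency of Proposition \ref{Prop:PDE}. The only organizational differences are that the paper uses shrinking neighbourhoods $V_n$ rather than the strict-max perturbation to force convergence of the argmax, and handles all boundary points uniformly through a single claim (iii) rather than your case-by-case analysis; your explicit treatment of the $f_M^N$ issue when $x_N<k_0/N$ (via $f_M^N(x_N,0)=1$ and time-monotonicity) is in fact slightly more detailed than the paper's, which leaves this adaptation to the reader.
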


We emphasize that the $\limsup$ are \emph{sub}-solutions, while the $\liminf$ are \emph{super}-solutions. We will restrict ourselves to proving that $\overline{f}_X$ and $\overline{f}_M$ are sub-solutions. A simple adaptation of the computations allows to prove that $\underline{f}_X$ and $\underline{f}_M$ are super-solutions. Moreover, the arguments being similar for $f_X^N$ and $f_M^N$, we will write $f$ to denote either $f_X^N$ or $f_M^N$ and $H^N$ to denote either $H_X^N$ or $H_M^N$.

\begin{proof}[Proof of Proposition \ref{Prop:CVSchemesbar}]
Fix $(x,t)\in [0,1]\times\R_+ \backslash \{(0,0)\}$. Our aim is to show that $\overline{f}$ satisfies the viscosity inequality at $(x,t)$ if $(x,t) \notin \partial \Omega$, or satisfies either the viscosity inequality at $(x,t)$ or the boundary inequality at $(x,t)$ if $(x,t) \in \partial\Omega$. Let $\phi$ be $\mathcal{C}^\infty$ and such that $\overline{f} -\phi$ admits a maximum at $(x,t)$ over $V \cap ([0,1]\times\R_+)$ for some neighbourhood $V$ of $(x,t)$. By definition of $\overline{f}$, there is a sequence $(N_{n})_n$ and $(y_n,s_n)_n$ converging to $(x,t)$ such that $\lim_{n\to\infty} f^{N_n}(y_n,s_n)=\overline{f}(x,t)$. For simplicity, we will write $f^n$ instead of $f^{N_n}$. Let $V_n$ be an open ball such that: (1) $V_n \subset V$,  (2) $V_n$ contains a ball centered at $(x,t)$ whose radius vanishes with $n$, (3) $V_n$ contains $(y_n,s_n)$. (For $n$ large enough, these constraints are compatible). Let us now pick a sequence $(x_n,t_n)_n$ satisfying for every $n$
$$(x_n,t_n)\in \text{arg}\max\left\{(f^n-\phi)(z,r),(z,r)\in V_n\cap\Omega_n\right\}\;.$$
Then we claim that (recall the set $\partial \Omega$ introduced right below \eqref{Eq:HJ}):
\begin{enumerate}[label=(\roman*)]
\item $(x_n,t_n)\too(x,t)$
\item $f^n(x_n,t_n)\too\overline{f}(x,t)$
\item If $(x,t) \in \partial \Omega$ and if $\overline{f}(x,t)>\text{BC}(x,t)$ then for all $n$ large enough $(x_n,t_n)\in(0,1)\times\R_+^*$.
\end{enumerate}
\medskip

Proof of (i). This is an immediate consequence of $\cap_n V_n = \{(x,t)\}$.

\medskip

Proof of (ii). We have
$$\limsup_{n\to\infty}f^n(x_n,t_n)\leqslant\overline{f}(x,t)$$
and
$$\liminf_{n\to\infty} f^n(x_n,t_n)-\phi(x_n,t_n)\geqslant\liminf_{n\to\infty}f^n(y_n,s_n)-\phi(y_n,s_n)=\overline{f}(x,t)-\phi(x,t).$$
We deduce that
$$\liminf_{n\to\infty} f^n(x_n,t_n)\geqslant\liminf_{n\to\infty}f^n(y_n,s_n)=\overline{f}(x,t)\;,$$
and this concludes the proof of (ii).

\medskip

Proof of (iii). Assume that $(x,t) \in \partial \Omega$, and that there is a sub-sequence $(x_{n_k},t_{n_k})$ that lies in $\partial\Omega$. By (i) and (ii), we deduce that $(x_{n_k},t_{n_k})\too(x,t)$ and $f^{n_k}(x_{n_k},t_{n_k})\too\overline{f}(x,t)$. Moreover, by definition of the systems of ODEs we have $f^{n_k}(x_{n_k},t_{n_k})=$BC$(x_{n_k},t_{n_k})$. The continuity (recall that $(0,0) \notin \partial\Omega$) of BC on $\partial\Omega$ ensures that $\overline{f}(x,t)=$BC$(x,t)$. We deduce (iii).

\medskip

Given the claims above, we conclude the proof as follows. If $(x,t)$ lies on $\partial \Omega$ and $\overline{f}(x,t) \leqslant $ BC$(x,t)$ then we are done. Now assume either that $(x,t)$ lies on $\partial \Omega$ but $\overline{f}(x,t) > $ BC$(x,t)$, or that $(x,t)$ does not lie on $\partial \Omega$. For all $n$ large enough, it holds $(x_n,t_n) \in (0,1)\times\R_+^*$. Let us set $d_n=f^n(x_n,t_n)-\phi(x_n,t_n)$. Recall $(x_n,t_n)$ is an argument of the maximum of $f^n-\phi$, so we have $\partial_t f^n(x_n,t_n)=\partial_t\phi(x_n,t_n)$.
 We compute (we omit writing the fourth argument $Nx_n$ of $H^{N_n}$ in the particular case where $H^{N_n} = H^{N_n}_M$):
\begin{align*}
&\partial_t\phi(x_n,t_n)=\partial_t f^n(x_n,t_n)\\
&=-H^{N_n}(f^n(x_n-1/N_n,t_n),f^n(x_n,t_n),f^n(x_n+1/N_n,t_n))\\
&\leqslant -H^{N_n}(\phi(x_n-1/N_n,t_n)+d_n,\phi(x_n,t_n)+d_n,\phi(x_n+1/N_n,t_n)+d_n)\\
&\leqslant -H^{N_n}(\phi(x_n-1/N_n,t_n),\phi(x_n,t_n),\phi(x_n+1/N_n,t_n))
\end{align*}
where we have used $f^n(x_n,t_n)=\phi(x_n,t_n)+d_n$ at the third line and Lemma \ref{increasing} at the third and fourth lines. 
By Proposition \ref{Prop:PDE}, taking the limit $n\to\infty$ we obtain
$$ \partial_t \phi(x,t) + \partial_x \phi(x,t) + (\partial_x \phi(x,t))^2 \le 0\;,$$
as required.
\end{proof}

We now conclude the proof of the convergence of the systems of ODEs.
\begin{proof}[Proof of Proposition \ref{Prop:CVschemes}]
First of all, the definitions of $\underline{f}, \overline{f}$ as $\liminf, \limsup$ imply that $\underline{f}$ is l.s.c.~and $\overline{f}$ is u.s.c. In addition, the monotonicity of $x\mapsto f^N(x,t)$ and $t\mapsto f^N(x,t)$ for all $N\geqslant 1$ yields that $x\mapsto \underline{f}(x,t)$, $x\mapsto \overline{f}(x,t)$, $t\mapsto \underline{f}(x,t)$, $t\mapsto \overline{f}(x,t)$ are monotonous too.\\
From Proposition \ref{Prop:Apriori}, we deduce that
$$ 0 \leqslant \overline{f}(x,t), \underline{f}(x,t) \leqslant 1-x\;,\quad (x,t) \in [0,1]\times \R_+\;.$$
Moreover, from the very definitions of these quantities, we obviously have $\underline{f} \leqslant \overline{f}$. The previous proposition showed that $\underline{f}$ is a super-solution of \eqref{Eq:HJ} while $\overline{f}$ is a sub-solution. By Lemma \ref{Lemma:u} and the inequality $\underline{f} \le \overline{f}$, we deduce that for all $x\in (0,1]$ and for all $t\in [0,4)$
\begin{equation}\label{Eq:underovert4}
	\underline{f}(x,t) \leqslant \overline{f}(x,t)\leqslant \frac{t}{4}\;.
\end{equation}
By Proposition \ref{Prop:Apriori}, we obtain for all $t\in [0,4)$
$$ \min(\frac{t}{4},1) \leqslant \underline{f}(0,t) \leqslant \overline{f}(0,t)\;.$$
This, combined with \eqref{Eq:underovert4} and the lower-semicontinuity of $\underline{f}$, yields
$$ \underline{f}(0,t) = \frac{t}{4}\;,\quad \forall t\in [0,4)\;.$$ 
Combined with the monotonicity in time of this function, we deduce that
$$ \underline{f}(0,t) = \min(\frac{t}{4},1)\;,\quad \forall t \ge 0\;.$$ 
We would like to apply the uniqueness result stated in Proposition \ref{Th:Uniq} and deduce that $\underline{f} \geqslant \overline{f}$, as this would imply equality of these two quantities. Unfortunately, this cannot be true since $\overline{f}(0,t) = 1$ for all $t\ge 0$, while $\underline{f}(0,t) = \min(\frac{t}{4},1)$.\\
Actually, Proposition \ref{Th:Uniq} cannot be applied because $x\mapsto \overline{f}(x,t)$ is \emph{not} continuous at $0$ for $t< 4$. To circumvent this issue, we introduce
$$\overline{\overline{f}}(x,t)=\begin{cases} \overline{f}(x,t)&\text{if } x>0\\
\lim_{y\downarrow 0}\overline{f}(y,t)&\text{if } x=0
\end{cases}$$
(This is well-defined by monotonicity). This function is still a sub-solution as it coincides with a sub-solution for $x \in (0,1]$, and satisfies the boundary condition $\overline{\overline{f}}(0,t) \le 1$ at $x=0$. We can now apply Proposition \ref{Th:Uniq} and deduce that $\underline{f} \ge \overline{\overline{f}}$. On the other hand, from their very definitions we see that necessarily $\underline{f} \le \overline{\overline{f}}$. This ensures that $\underline{f} = \overline{\overline{f}} = S$.\\
We have therefore shown that $f^N_X(x_N,t), f^N_M(x_N,t)$ converge to $S(x,t)$ for all $(x,t)$. We remark this convergence is locally uniform because $S$ is continuous and because of the definitions of $\underline{f}$ and $\overline{f}$. This is exactly the desired convergence for $T(\E[X^{\max}])$.
Regarding $\E[T^M]$, we need an additional argument to conclude. Fix $t_0> 0$. By Lemma \ref{Lemma:MY}, we know that
$$ \P\big(\forall k, \forall t\in [0,t_0 N/\lambda],\quad T^M_k(tN/\lambda) \geqslant T^X_k(tN/\lambda) \big) \too 1\;,\quad N\to\infty\;.$$
Since $T$ is valued in $[0,1]$, we thus deduce that for all $\eps > 0$, we have for all $N$ large enough, all $k$ and all $t\le t_0$
\begin{equation}\label{Eq:TYM2}
f_X^N(k/N,t) = \E[T^X_k(tN/\lambda)] \leqslant \E[T^M_k(tN/\lambda)] + \eps  \leqslant f_M^N(k/N,t) + \eps\;,
\end{equation}
which suffices to conclude.
\end{proof}

The remaining of this subsection is devoted to the proof of Proposition \ref{Prop:Apriori}. We introduce the notions of sub and super solutions of our systems of ODEs.

\begin{defn}\label{defSol}
We say that $u$ is a sub-solution (respectively super-solution) of the system associated to $H_X^N$ if $t\mapsto u(x,t)$ is differentiable and if for all $(x,t)\in \Omega_N$ it holds:
$$\partial_t u(x,t) + H_X^N(u(x-1/N,t),u(x,t),u(x+1/N,t)) \leqslant 0\;,$$
(resp.~$\geqslant$) and
\begin{align*}
u(0,t) \leqslant 1\;,\quad u(1,t)\leqslant 0\;,\quad u(x,0) \leqslant 0
\end{align*}
(resp.~$\geqslant$). We adopt similar definitions for the sub- and super-solutions of the system associated to $H_M^N$.
\end{defn}

\begin{prop}\label{Prop:CompareScheme}
If $u$ is a sub-solution and $v$ a super-solution of either system, then $u\leqslant v$.
\end{prop}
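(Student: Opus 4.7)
The plan is to prove the discrete comparison principle by contradiction, using a perturbation by $\eta t$ to force a strict maximum, and exploiting the two properties of $H^N$ established in Lemma \ref{increasing}: monotonicity (non-increasing in the first and third arguments) and translation invariance (invariant under addition of a common constant to all three arguments). The argument will be essentially identical for $H^N_X$ and $H^N_M$, so I shall denote either by $H^N$.

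Fix an arbitrary $T > 0$; it is enough to show $u \leqslant v$ on $\Omega_N \cap ([0,1] \times [0,T])$. For $\eta > 0$, set $w_\eta(x,t) := u(x,t) - v(x,t) - \eta t$. Since the spatial grid $\{0,1/N,\ldots,1\}$ is finite and $t \mapsto w_\eta(x,t)$ is continuous on $[0,T]$, the supremum $M_\eta$ of $w_\eta$ on $\Omega_N \cap ([0,1] \times [0,T])$ is attained at some point $(x_0,t_0)$. Suppose, for contradiction, that $M_\eta > 0$. The sub-/super-solution boundary conditions $u(0,t) \leqslant 1 \leqslant v(0,t)$, $u(1,t) \leqslant 0 \leqslant v(1,t)$, and $u(x,0) \leqslant 0 \leqslant v(x,0)$ give $w_\eta \leqslant 0$ on the parabolic boundary, so necessarily $x_0 \in \{1/N,\ldots,(N-1)/N\}$ and $t_0 > 0$.

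At the maximum point, the left time-derivative is non-negative, so $\partial_t w_\eta(x_0,t_0) \geqslant 0$ and hence $\partial_t u(x_0,t_0) - \partial_t v(x_0,t_0) \geqslant \eta > 0$. On the other hand, combining the sub-solution inequality for $u$ and the super-solution inequality for $v$ yields
$$\partial_t u(x_0,t_0) - \partial_t v(x_0,t_0) \leqslant H^N\big(v(x_0-\tfrac{1}{N},t_0),v(x_0,t_0),v(x_0+\tfrac{1}{N},t_0)\big) - H^N\big(u(x_0-\tfrac{1}{N},t_0),u(x_0,t_0),u(x_0+\tfrac{1}{N},t_0)\big).$$
Set $a := u(x_0,t_0) - v(x_0,t_0) = M_\eta + \eta t_0$. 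Since $(x_0,t_0)$ is a maximum of $w_\eta$, we have $u(x_0 \pm \tfrac{1}{N},t_0) - v(x_0 \pm \tfrac{1}{N},t_0) \leqslant a$, i.e.\ $u(x_0 \pm \tfrac{1}{N},t_0) \leqslant v(x_0 \pm \tfrac{1}{N},t_0) + a$. Using that $H^N$ is non-increasing in its first and third arguments and translation invariant (Lemma \ref{increasing}), we obtain
$$H^N\big(u(x_0-\tfrac{1}{N},t_0),u(x_0,t_0),u(x_0+\tfrac{1}{N},t_0)\big) \geqslant H^N\big(v(x_0-\tfrac{1}{N},t_0)+a,v(x_0,t_0)+a,v(x_0+\tfrac{1}{N},t_0)+a\big)$$
$$= H^N\big(v(x_0-\tfrac{1}{N},t_0),v(x_0,t_0),v(x_0+\tfrac{1}{N},t_0)\big).$$
Thus $\partial_t u(x_0,t_0) - \partial_t v(x_0,t_0) \leqslant 0$, contradicting the strict positivity derived above.

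Hence $M_\eta \leqslant 0$ for every $\eta > 0$, so $u(x,t) - v(x,t) \leqslant \eta t$ for all $(x,t) \in \Omega_N \cap ([0,1] \times [0,T])$; letting $\eta \downarrow 0$ and then $T \uparrow \infty$ concludes the proof. The only subtlety worth flagging is the case $H^N = H^N_M$, which carries an extra deterministic argument $k = N x$; this argument is unchanged by all three substitutions made above, so the same monotonicity/translation invariance properties apply verbatim. The main conceptual obstacle (the lack of a compactness argument in space) is bypassed automatically because the spatial grid is finite.
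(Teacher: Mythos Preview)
Your proof is correct. It differs from the paper's argument in a useful way, so a brief comparison is worthwhile.

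The paper argues via the \emph{first touching time} $t_0 = \inf\{t: \exists k,\, v(k/N,t) < u(k/N,t)\}$: at $t_0$ one has $v \geqslant u$ everywhere with equality at some $k$, and monotonicity of $H^N$ in its outer arguments gives $\partial_t v(k/N,t_0) \geqslant \partial_t u(k/N,t_0)$; one then argues that $v \geqslant u$ persists past $t_0$, contradicting the definition of $t_0$. This uses only the monotonicity part of Lemma~\ref{increasing}, not the translation invariance, because at the touching point $u(k/N,t_0)=v(k/N,t_0)$ exactly.

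Your approach instead perturbs by $-\eta t$ and looks at the \emph{maximum} of $u-v-\eta t$ over a compact time interval. At the maximiser the middle values differ by $a=u(x_0,t_0)-v(x_0,t_0)>0$, so you need the translation invariance of $H^N$ (also in Lemma~\ref{increasing}) in addition to monotonicity to conclude. The payoff is that the $\eta$-perturbation manufactures a \emph{strict} inequality $\partial_t u - \partial_t v \geqslant \eta > 0$, which cleanly contradicts $\partial_t u - \partial_t v \leqslant 0$; the paper's version only yields the non-strict inequality $\partial_t(v-u)\geqslant 0$ at the touching point, and the passage from there to ``$v\geqslant u$ in a neighbourhood'' is somewhat delicate when equality holds.

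One minor remark: you write the initial inequality as $u(x,0)\leqslant 0\leqslant v(x,0)$, which is literally the $H^N_X$ case. For $H^N_M$ the initial datum is not identically zero, but the sub/super-solution definitions still give $u(x,0)\leqslant v(x,0)$ on the parabolic boundary, which is all your argument needs.
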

\begin{proof}
Denote $$t_0=\inf\{t\geqslant 0,\exists k\in\llbracket 1,N-1\rrbracket,\ v(k/N,t)<u(k/N,t)\}\;.$$ We suppose by contradiction that $t_0<\infty$. Using the continuity in $t$ of $u$ and $v$, we have for all $k\in\llbracket 1,N-1\rrbracket$, the inequality $v(k/N,t_0)\geqslant u(k/N,t_0)$ and for at least one $k\in\llbracket 1,N-1\rrbracket$, the equality $v(k/N,t_0)= u(k/N,t_0)$. For any $k$ such that $v(k/N,t_0)= u(k/N,t_0)$, the monotonicity stated in Lemma \ref{increasing} yields
$$\partial_t v(k/N,t_0)\geqslant \partial_t u(k/N,t_0)\;,$$
and therefore $v(k/N,t) \geqslant u(k/N,t)$ for all $t$ in some neighborhood of $t_0$. On the other hand, for any $k$ such that $v(k/N,t_0)> u(k/N,t_0)$, the continuity in $t$ ensures that this also remains true in a neighborhood of $t_0$. This raises a contradiction with the finiteness of $t_0$, and therefore $t_0=\infty$.
\end{proof}

We can now proceed with the proof of our a priori bounds.
\begin{proof}[Proof of Proposition \ref{Prop:Apriori}]
It is elementary to check that $u\equiv 0$ is a sub-solution of both schemes. A simple computation shows that the function $v_X(x,t) = 1-x$ is a super-solution for $H_X^N$. On the other hand, identifying a super-solution for $H_M^N$ is less immediate. Recall the definition of $k_0$ from the beginning of Subsection \ref{Subsec:M}. Define for every $(x,t) \in \Omega_N$
$$v_M(x,t)=\begin{cases}
1 &\text{if }x < k_0/N\\
c_N(1-x) &\text{if }x\ge k_0/N
\end{cases}\;,\quad \text{with }c_N := \frac1{1- \frac{k_0}{N}}\;.$$
Let us check that $v_M$ is a super-solution for $H_M^N$. The boundary inequalities associated to the scheme $H_M^N$ are trivially satisfied. Regarding the evolution in time, we distinguish three cases according to the relative values of $xN$ and $k_0$.\\
The function $v_M$ is constant to the left of $k_0/N$. Since $H_M^N(u,u,u,k) = 0$, we deduce that whenever $0 < xN < k_0$ if we set $k=xN$ we have $\partial_t v_M(x,t) = -H_M^N(v_M(x-1/N,t) , v_M(x,t) , v_M(x-1/N,t),k)$.\\
Now assume that $k_0 < xN < N$ and set $k=xN$. Since $\mu_k \leqslant \mu_{k_0}$ and $r^{-c_N} - r^{c_N} < 0$, we get
\begin{align*}
&H_M^N(c_N(1-x+1/N),c_N(1-x),c_N(1-x-1/N),k)\\
&= \frac{1}{\lambda}\log_r\left(\frac{1+\lambda+\mu_k}{2}r^{-c_N}+\frac{1-\lambda-\mu_k}{2}r^{c_N}\right)\\
&\geqslant \frac{1}{\lambda}\log_r\left(\frac{1+\lambda+\mu_{k_0}}{2}r^{-c_N}+\frac{1-\lambda-\mu_{k_0}}{2}r^{c_N}\right)\;.
\end{align*}
Let us check that $g(c_N) \geqslant 1$ where
$$ g(x) := \frac{1+\lambda+\mu_{k_0}}{2}r^{-x}+\frac{1-\lambda-\mu_{k_0}}{2}r^{x}\;.$$
We have $g(c'_N) = 1$ for $c'_N := \log_r( (1+\lambda+\mu_{k_0}) / (1-\lambda-\mu_{k_0}))$. Furthermore, a computation shows that $g$ is non-decreasing on $[c'_N,\infty)$. Finally, it can be checked that $\mu_{k_0}/\lambda \ll k_0/N$ so that a Taylor expansion shows that $c_N \ge c'_N$ holds for all $N$ large enough.\\
We thus deduce that
$$ -H_M^N(c_N(1-x+1/N),c_N(1-x),c_N(1-x-1/N),k) \leqslant 0 = \partial_t v_M(x,t)\;,$$
as required.\\
Finally if $x = k_0/N$, then the previous computation combined with the monotonicity of $H_M^N$ stated in Lemma \ref{increasing} yields the desired result. This shows that $v_M$ is indeed a super-solution for $H^N_M$.\\
Since $f_X^N$ (resp.~$f_M^N$) is both a sub- and a super-solution for $H_X^N$ (resp.~$H_M^N$), we deduce from Proposition \ref{Prop:CompareScheme} that $0\leqslant f_X^N(x,t) \leqslant 1-x$ and $0 \leqslant f_M^N(x,t) \leqslant c_N(1-x)$, as desired.\\

We turn to the a priori lower bound at $x=0$. We define $f(x,t)=\frac{-x^2}{16-4t}-\frac{x}{2}+\frac{t}{4}$. One can check $f$ is a $C^\infty$ solution of the equation $$\partial_t f+\partial_x f+(\partial_x f)^2=0$$
on $[0,1]\times[0,4)$. Fix $T<4$. Inspecting the proof of Proposition \ref{Prop:PDE}, one can check that there exists $C>0$ such that
$$\Big| H_X^N(f(\frac{k-1}{N},t),f(\frac{k}{N},t),f(\frac{k+1}{N},t))-\partial_x f(\frac{k}{N},t)-(\partial_x f(\frac{k}{N},t))^2 \Big|$$
is bounded by $C\max(\lambda, 1/(N\lambda))$ uniformly over all $k\in \{1,\ldots,N-1\}$ and all $t\in [0,T]$. The same holds for $H_M^N$ but only for $k\geqslant k_0$. This being given, we set $u^N(k/N,t) := f(k/N,t) - C\max(\lambda, 1/(N\lambda)) t$ for all $k$ and $t\in [0,4)$, and we claim this is a sub-solution for both systems associated to $H^N_X$ and $H^N_M$. It is immediate for $H^N_X$. Regarding $H^N_M$, it is immediate that
$$\partial_t u^N(k/N,t)\leqslant -H_M^N(u^N(\frac{k-1}{N},t),u^N(\frac{k}{N},t),u^N(\frac{k+1}{N},t),k)$$
for $k\geqslant k_0$. On the other hand for $k< k_0$ we have $\mu_k = 1-\lambda$ and thus $$H_M^N(f(\frac{k-1}{N},t),f(\frac{k}{N},t),f(\frac{k+1}{N},t),k)=\frac{1}{\lambda}N(f(\frac{k}{N},t)-f(\frac{k-1}{N},t))$$and the right hand side goes to $-\infty$ as $N\to\infty$, while $\partial_t f(k/N,t)-C\max(\lambda, 1/(N\lambda))$ remains bounded. This ensures that for $N$ large enough
$$\partial_t u^N(k/N,t)\leqslant -H_M^N(u^N(\frac{k-1}{N},t),u^N(\frac{k}{N},t),u^N(\frac{k+1}{N},t),k)\;.$$
 The claim follows. Applying Proposition \ref{Prop:CompareScheme} (on the finite interval of time $[0,T]$) and passing to the limit on $N$, we deduce that 
$$\underline{f}_X(0,t)\geqslant \frac{t}{4}\text{ and }\underline{f}_M(0,t)\geqslant \frac{t}{4}\;.$$Since $T$ can be taken as close to $4$ as desired, this concludes the proof.
\end{proof}

\subsection{Proof of the first hydrodynamic limit}\label{Subsec:Naive}

\begin{proof}[Proof of Proposition \ref{Prop:HydroNaive} and Theorem \ref{Th:2}]
The upper bound of Theorem \ref{Th:2} was proved in Section \ref{Sec:Upper}. The lower bound follows from concentration estimates on the invariant measure, similar to those presented in the proof of Theorem \ref{Th:Main}, combined with the hydrodynamic limit of Proposition \ref{Prop:HydroNaive}, that we now prove.\\
Assume that  for all $x \in (0,1)$ and $t > 0$ such that $x\ne t$, as $N\to\infty$ the following convergence holds
\begin{equation}\label{Eq:CVNaive}
\frac{1}{N}\E[X^{\max}_{\lfloor xN\rfloor}(tN/\lambda)] \to \mathbf{1}_{[t,1]}(x)\;.
\end{equation}
Since $\frac{1}{N} X^{\max}$ takes values in $[0,1]$, this immediately implies the statement of the proposition. We are left with the proof of the convergence.\\
The map $u^N(k/N,t) :=\frac{1}{N}\E(X^{\max}_{k}(tN/\lambda)$ is solution of the following system of ODEs:
\begin{align*}
\partial_t u(x,t)+H^N(u(x-1/N,t),u(x,t),u(x+1/N,t)) = 0\;,\quad (x,t)\in\Omega_N\;,\\
u(0,t)=0\;,\quad u(1,t)=1\;,\quad u(k/N,0)=1\;,
\end{align*}
with $H^N(x,y,z)=-\frac{N}{\lambda}\left(\frac{1+\lambda}{2}(x-y)+\frac{1-\lambda}{2}(z-y)\right)$. The function $H^N$ is non-increasing in its first and third variables so we can introduce the notion of sub- and super-solution of this system as in the previous subsection. The comparison of Lemma \ref{Prop:CompareScheme} between sub- and super-solutions remains in force in this context. We will exhibit a sub-solution $f_-^N$ such that for $x>t$, we have $f_-^N(\lfloor xN\rfloor,t)\too 1$ as $N\to\infty$ and a super-solution $f_-^N$ such that for $x<t$, we have $f_-^N(\lfloor xN\rfloor,t)\too 0$ as $N\to\infty$. By comparison, this will imply that $f_-^N\leqslant u^N \leqslant f_+^N$ and the desired property will thus follow.\\
We define for $(k,t)\in\Omega_N$:
\begin{align*}
f_-^N(k,t)&=1-(N\lambda)^{2/3}\left(t-\frac{k}{N}+\frac{1}{(N\lambda)^{1/3}}\right)_+^2-\frac{t}{(N\lambda)^{1/3}}\;,\\
f_+^N(k,t)&=(N\lambda)^{2/3}\left(\frac{k}{N}+\frac{1}{(N\lambda)^{1/3}}-t\right)_+^2+\frac{t}{(N\lambda)^{1/3}}\; .
\end{align*}
We restrict ourselves to showing that $f_-^N$ is a sub-solution, since the arguments to show that $f_+^N$ is a super-solution are quite similar. It is easy to check that $f_-^N(0,t)\leqslant 0,\ f_-^N(k,0)\leqslant 1$ and $f_-^N(N,t)\leqslant 1$. Regarding the evolution in time, we need to argue differently according to the relative values of $t$ and $k$. To alleviate the notations, let us write $H$ for $H^N(f_-^N(k-1,t),f_-^N(k,t),f_-^N(k+1,t))$.\\
If $t\leqslant \frac{k-1}{N}-\frac{1}{(N\lambda)^{1/3}}$, we have $H=0$ so $\partial_t f_-^N(k,t)\leqslant -H$.\\
If $t\geqslant \frac{k+1}{N}-\frac{1}{(N\lambda)^{1/3}}$ a computation shows that
\begin{align*}
H&=(N\lambda)^{2/3}2\left(t-\frac{k}{N}+\frac{1}{(N\lambda)^{1/3}}\right)+\frac{1}{(N\lambda)^{1/3}} =- \partial_t f_-^N(k,t)\;.
\end{align*}
If $t\in [\frac{k}{N}-\frac{1}{(N\lambda)^{1/3}},\frac{k+1}{N}-\frac{1}{(N\lambda)^{1/3}}]$, as $H^N$ is non-increasing in its third variable we can replace $f_-^N(k+1,t)$ by $1-(N\lambda)^{2/3}\left(t-\frac{k+1}{N}+\frac{1}{(N\lambda)^{1/3}}\right)^2-\frac{t}{(N\lambda)^{1/3}}$ and the previous computation shows that $\partial_t f_-^N(k,t)\leqslant -H$.\\
If $t\in [\frac{k-1}{N}-\frac{1}{(N\lambda)^{1/3}},\frac{k}{N}-\frac{1}{(N\lambda)^{1/3}}]$, we compute 
\begin{align*}
-H&=-(N\lambda)^{2/3}\frac{N}{\lambda}\frac{1+\lambda}{2}\left(t-\frac{k-1}{N}+\frac{1}{(N\lambda)^{1/3}}\right)^2\\
&\geqslant -\frac{1+\lambda}{2}\frac{1}{(N\lambda)^{1/3}}\\
&\geqslant -\frac{1}{(N\lambda)^{1/3}}=\partial_t f_-^N(k,t)\; .
\end{align*}
\end{proof}

\bibliographystyle{Martin}
\bibliography{library}

\end{document}